\definecolor{linkpurple}{RGB}{136,92,150}
\newtheorem{theorem}{Theorem}[section]
\newtheorem{corollary}[theorem]{Corollary}
\newtheorem{lemma}[theorem]{Lemma}
\newtheorem{proposition}[theorem]{Proposition}
\newtheorem*{question*}{Question} 
\theoremstyle{definition}
\newtheorem{remark}[theorem]{Remark}
\theoremstyle{definition}
\newtheorem{example}[theorem]{Example}
\newcommand{\leftrarrows}{\mathrel{\raise.75ex\hbox{\oalign{%
  $\scriptstyle\leftarrow$\cr
  \vrule width0pt height.5ex$\hfil\scriptstyle\relbar$\cr}}}}
\newcommand{\lrightarrows}{\mathrel{\raise.75ex\hbox{\oalign{%
  $\scriptstyle\relbar$\hfil\cr
  $\scriptstyle\vrule width0pt height.5ex\smash\rightarrow$\cr}}}}
\newcommand{\Rrelbar}{\mathrel{\raise.75ex\hbox{\oalign{%
  $\scriptstyle\relbar$\cr
  \vrule width0pt height.5ex$\scriptstyle\relbar$}}}}
\def\leftrightarrowsfill@{\arrowfill@\leftrarrows\Rrelbar\lrightarrows}
\newcommand{\xleftrightarrows}[2][]{\ext@arrow 3399\leftrightarrowsfill@{#1}{#2}}
\definecolor{MutedBlue}{RGB}{40,90,160}
\definecolor{violet}{rgb}{.6,0,.6}
\definecolor{green}{rgb}{.0,.8,0}
\definecolor{darkred}{rgb}{.7,0,0}
\definecolor{darkyellow}{rgb}{.4,.4,0}
\definecolor{orangenew}{rgb}{.69,.37,.25}
\definecolor{MutedRed}{RGB}{200,55,55}
\newcommand{\Z}{\mathbb{Z}}
\newcommand{\R}{\mathbb{R}}
\newcommand{\sym}{\operatorname{Sym}}
\newcommand{\diff}{\operatorname{Diff}}
\let\int\relax
\newcommand{\int}{\mathring}
\DeclareMathOperator{\ckh}{\mathcal{C}{Kh}}
\DeclareMathOperator{\id}{{id}}
\DeclareMathOperator{\pt}{{pt}}
\DeclareMathOperator{\kh}{{Kh}}
\DeclareMathOperator{\wh}{Wh}
\DeclareMathOperator{\hfk}{\mathit{HFK}}
\DeclareMathOperator{\sw}{SW}
\patchcmd{\@maketitle}{\LARGE \@title}{\fontsize{16}{19.2}\selectfont\@title}{}{}
\author[K.\ Hayden]{Kyle Hayden}
\address{Department of Mathematics and Computer Science, Rutgers University--Newark,\linebreak Newark, NJ 07102 USA}
\email{kyle.hayden@rutgers.edu}
\author[S.\ Kim]{Seungwon Kim}
\address{Department of Mathematics, Sungkyunkwan University, Suwon 16419, Republic of Korea}
\email{math751@gmail.com}
\author[M.\ Miller]{Maggie Miller}
\address{Department of Mathematics, Stanford University, Stanford, CA 94305 USA}
\email{maggie.miller.math@gmail.com}
\author[J.\ Park]{\\JungHwan Park}
\address{Department of Mathematical Sciences, Korea Advanced Institute of Science and Technology, Daejeon 34141, Republic of Korea}
\email{jungpark0817@kaist.ac.kr}
\author[I.\ Sundberg]{Isaac Sundberg}
\address{Max Planck Institute for Mathematics, Bonn 53111, Germany}
\email{isaacsundbe@gmail.com}
\title[Seifert surfaces in the 4-ball]{Seifert surfaces in the 4-ball}
\thanks{\tiny KH is supported by NSF grant DMS-2114837. SK was supported by National Research Foundation of Korea (NRF) grants funded by the Korea government (MSIT) (No.\ 2020R1A5A1016126, 2022R1C1C2004559). MM is supported by a fellowship from the Clay Mathematics Institute and partially by a Stanford Science Fellowship. JP is partially supported by Samsung Science and Technology Foundation (SSTF-BA2102-02) and the POSCO TJ Park Science Fellowship. \vspace{-.2in}}
\begin{document}

\vspace*{-.2in}
\maketitle

\vspace{-.125in}

\begin{center}\small
\textsc{Kyle Hayden, Seungwon Kim, Maggie Miller}

\vspace{.015in}

\textsc{JungHwan Park, and Isaac Sundberg}
\end{center}

\begin{abstract}
We answer a question of Livingston from 1982 by producing Seifert surfaces of the same genus for a knot in $S^3$ that do not become isotopic when their interiors are pushed into $B^4$. In particular, we identify examples where the surfaces are not even topologically isotopic in $B^4$, examples that are topologically but not smoothly isotopic, and examples of infinite families of surfaces that are  distinct only up to isotopy rel.\ boundary.  Our main proofs distinguish surfaces using the cobordism maps on Khovanov homology, and our calculations demonstrate the stability and computability of these maps under certain satellite operations. 
\end{abstract}

\vspace{.125in}

\section{Introduction}\label{sec:intro}
\thispagestyle{empty}

\vspace{.05in}

The following question was posed by Livingston \cite{livingston}, generalizing Problem 1.20(C) in \cite{kirby-old,kirby}.

\begin{question*}
    Let $\Sigma_0$ and $\Sigma_1$ be Seifert surfaces of equal genus for a knot $K$. After pushing the interiors of both surfaces into $B^4$, are $\Sigma_0$ and $\Sigma_1$ isotopic  in $B^4$?
\end{question*}

There are many known examples of knots bounding genus-$g$ incompressible Seifert surfaces that are not isotopic in $S^3$, dating back at least to work of Trotter \cite{trotter} (c.f.~\cite{alford}). However, as discussed below, most examples of such Seifert surfaces are known to become isotopic when their interiors are pushed into $B^4$. This contrasts with the codimension-two setting, where there are a wealth of constructions that produce 4-dimensionally knotted surfaces in $B^4$ and other 4-manifolds (e.g.\ \cite{artin,zeeman,fintushel-stern:surfaces}). In this light, Livingston's question asks if 4-dimensional knotting fundamentally requires all four dimensions.  Our main results answer this question.

\begin{theorem}\label{thm:ambient}
There exist minimal genus Seifert surfaces in $S^3$ with the same boundary that are not ambiently isotopic in $B^4$. 
\end{theorem}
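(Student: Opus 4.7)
The plan is to exhibit a knot $K\subset S^3$ bearing two minimal-genus Seifert surfaces $\Sigma_0,\Sigma_1$ and to distinguish them in $B^4$ by evaluating the cobordism maps that properly embedded surfaces induce on Khovanov homology (in the sense of Jacobsson/Bar-Natan). Pushing each $\Sigma_i$ into $B^4$ produces a cobordism from the empty link to $K$, hence a $\Z$-linear map $F_{\Sigma_i}\colon \kh(\emptyset)\to\kh(K)$, and the class $\phi_i:=F_{\Sigma_i}(1)\in \kh(K)$ is an isotopy-rel-boundary invariant of $\Sigma_i\subset B^4$. An ambient isotopy in $B^4$ restricts to a self-diffeomorphism of the boundary pair $(S^3,K)$, so if $\Sigma_0$ is ambiently isotopic to $\Sigma_1$ then the classes $\phi_0$ and $\phi_1$ must lie in a common orbit of the $\mathrm{Sym}(S^3,K)$-action on $\kh(K)$. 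The goal therefore splits into two tasks: compute $\phi_0$ and $\phi_1$ explicitly, and show they remain distinct under this finite group action.

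For the construction I would start from a pair of smoothly non-isotopic objects in $B^4$ whose Khovanov cobordism invariants are already known to differ --- for instance, a pair of slice disks $D_0,D_1$ for some knot $J$ --- and insert this pair into a pattern surface via a satellite operation. Concretely, pick a pattern Seifert surface $F$ for a pattern knot $P$ in a solid torus such that $F$ contains a distinguished trivial disk that can be cut out and replaced by a tangle containing $D_0$ or $D_1$. Sateliting with the companion $J$ then yields two genus-$g$ Seifert surfaces $\Sigma_0,\Sigma_1$ for the same satellite knot $K$, and minimality of the genus can be arranged by choosing $F$ and $P$ so that the Seifert genus of $K$ matches $g$ (e.g.~by a Bennequin/braid-genus or Alexander-polynomial argument). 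The merit of this approach is that the Khovanov cobordism map is functorial under such local gluings, so one expects a satellite formula of the form $\phi_i = \Phi\bigl(F_{D_i}(1)\bigr)$ for a fixed, computable operator $\Phi$ depending only on the complement of the replaced disk.

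The main obstacle is establishing that this satellite operator $\Phi$ is injective enough to detect the difference between $F_{D_0}(1)$ and $F_{D_1}(1)$, and that the resulting nonequality of $\phi_0$ and $\phi_1$ survives the action of $\mathrm{Sym}(S^3,K)$. I would address the first half by proving a stability statement for Khovanov cobordism maps under the chosen satellite pattern, essentially a controlled variant of Bar-Natan's local picture tangle calculus that lets one factor the map for $\Sigma_i$ through the map for $D_i$ tensored with a fixed tangle cobordism. For the second half, the satellite construction can be arranged so that $K$ has a small and tractable symmetry group --- for instance, by choosing $J$ and $P$ with no obvious symmetries and then verifying via classical invariants (Alexander polynomial asymmetry, hyperbolic structure of the complement, or Kakimizu complex rigidity) that the only relevant self-diffeomorphisms of $(S^3,K)$ act trivially or by involutions whose action on $\kh(K)$ can be computed directly. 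If controlling all of $\mathrm{Sym}(S^3,K)$ proves too costly, a reasonable fallback is first to prove the analogous rel-boundary non-isotopy result (for which the symmetry argument is unnecessary) and then to upgrade to ambient non-isotopy by an independent rigidity statement about the knot $K$; both routes seem viable, with the satellite stability theorem for the Khovanov cobordism maps being the key technical ingredient in either case.
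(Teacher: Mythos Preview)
Your proposal has a fundamental gap at the construction step. You want to build Seifert surfaces $\Sigma_0,\Sigma_1$ by cutting a trivial subdisk out of a pattern Seifert surface and replacing it with slice disks $D_0,D_1$ for some knot $J$. But slice disks are properly embedded in $B^4$, not in $S^3$; once you splice one in, the result is a genuinely $4$-dimensional surface, not a pushed-in Seifert surface. Livingston's question is precisely about surfaces that arise in $S^3$ and are only afterwards pushed into $B^4$, so this construction does not address the theorem as stated. (If instead you meant to replace the subdisk by a different spanning surface in $S^3$ for the same unknotted boundary circle, any such surface is isotopic to a disk and the two $\Sigma_i$ would coincide.) Beyond this, the plan leans on an unproven ``satellite stability'' statement for Khovanov cobordism maps; such statements exist in special cases, but establishing the injectivity of your operator $\Phi$ is a substantial task and you give no indication of how to carry it out.

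The paper's route is far more direct and avoids both issues. It writes down two explicit genus-$1$ Seifert surfaces $\Sigma_0,\Sigma_1$ for a specific knot $K$ (a $-6$-twisted Whitehead double of the left-handed trefoil; their union in $S^3$ is a standardly embedded closed genus-$2$ surface), and distinguishes them in two independent ways. First, a hands-on chain-level computation locates a cycle $\phi\in\ckh(K)$ with $\ckh(\Sigma_1)(\phi)=1$ but $\ckh(\Sigma_0)(\phi)=0$; the upgrade from rel-boundary to ambient non-isotopy is handled by analyzing $\sym(K)$ via the JSJ decomposition of $S^3\setminus K$ and showing every symmetry extends over $(B^4,\Sigma_1)$. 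Second---and even more elementarily---the intersection forms $V_i+V_i^T$ of the double branched covers $\Sigma_2(B^4,\Sigma_i)$ are read off from the Seifert matrices and shown to be inequivalent over $\Z$, so the surfaces are not even topologically ambiently isotopic. No satellite machinery, no abstract functoriality result, and no imported slice-disk examples are needed.
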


We provide a variety of counterexamples arising from three fundamentally distinct constructions, each highlighting different aspects of the problem; see \S\ref{sec:constructions}.  For a  core pair of examples $\Sigma_0$ and $\Sigma_1$,  see Figure~\ref{fig:46band}. Our main proofs distinguish surfaces up to smooth isotopy using the cobordism maps on Khovanov homology. These arguments expand on the computational techniques developed in \cite{sundberg-swann,hayden-sundberg}, including the study of satellite operations (such as Whitehead doubling) on these cobordism maps. In particular, we are able to detect exotically knotted pairs of Seifert surfaces:

\begin{theorem}\label{thm:exotic}
There exist infinitely many distinct pairs of minimal genus Seifert surfaces in $S^3$ that are topologically isotopic rel.\ boundary in $B^4$ yet are not smoothly ambiently isotopic in $B^4$.
\end{theorem}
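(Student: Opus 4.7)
The plan is to amplify the examples constructed for Theorem~\ref{thm:ambient} via a satellite operation --- concretely, the (possibly twisted) Whitehead doubling pattern --- in order to preserve the smooth obstruction detected by Khovanov cobordism maps while collapsing each pair in the topological category. The crucial new technical input is a \emph{satellite stability} result for the cobordism maps on Khovanov homology, i.e.\ a formula showing that the difference between the Khovanov maps induced by two companion surfaces survives the Whitehead pattern.

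To set up the families, I would start with the pair $(\Sigma_0, \Sigma_1)$ from Theorem~\ref{thm:ambient} bounding a common knot $K$, and construct, for each integer $n$ in a suitably chosen infinite subset of $\Z$, a pair $(\Sigma_0^{(n)}, \Sigma_1^{(n)})$ of minimal genus Seifert surfaces bounding a common satellite knot $K^{(n)}$ produced by a twisted Whitehead doubling operation applied to $(K, \Sigma_i)$. For appropriate parameters $n$ the satellite knot $K^{(n)}$ has trivial Alexander polynomial, so by Freedman's disk embedding theorem its exterior in $B^4$ is topologically as rigid as possible. Combined with the observation that both $\Sigma_0^{(n)}$ and $\Sigma_1^{(n)}$ agree with a fixed topological model outside the companion region, and that the companion surfaces $\Sigma_i$ become topologically standard inside a topological $B^4$--neighborhood cut out by a slice disk for $K$, this yields a topological ambient isotopy rel.\ boundary between $\Sigma_0^{(n)}$ and $\Sigma_1^{(n)}$.

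The heart of the proof is the smooth obstruction. Here I would extend the Khovanov cobordism-map calculation behind Theorem~\ref{thm:ambient} through the satellite operation by expressing the Khovanov complex of $K^{(n)}$ via Bar-Natan's tangle description, in terms of the clasp tangle of the Whitehead pattern and the companion $K$. With respect to this decomposition, the cobordism map induced by $\Sigma_i^{(n)}$ should factor through a distinguished summand on which it reduces to the tensor of a universal pattern map with the cobordism map induced by $\Sigma_i$. The discrepancy already established between $\Sigma_0$ and $\Sigma_1$ then propagates to a discrepancy between $\Sigma_0^{(n)}$ and $\Sigma_1^{(n)}$. The main obstacle is precisely this step: one must verify that the non-trivial difference detected at the companion level is not annihilated in the much larger Khovanov homology of $K^{(n)}$, which requires careful functorial bookkeeping and exhibiting an explicit non-vanishing test class --- e.g.\ a Lee or Bar-Natan canonical generator in the satellite complex --- that survives the satellite pass.

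Finally, to promote one exotic pair into infinitely many, I would vary the twisting parameter $n$ and note that the satellite knots $K^{(n)}$ are pairwise non-isotopic, distinguished for instance by their signature functions or Rasmussen invariants, so that the unordered pairs $\{\Sigma_0^{(n)}, \Sigma_1^{(n)}\}$ are pairwise inequivalent in $B^4$ simply at the level of their boundaries. Combined with the previous three steps, this produces the required infinite family of minimal genus Seifert surface pairs that are topologically but not smoothly isotopic rel.\ boundary in $B^4$.
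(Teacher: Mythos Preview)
Your overall architecture --- Whitehead double the core pair $(\Sigma_0,\Sigma_1)$, invoke a result on surfaces bounding Alexander-polynomial-1 knots for the topological isotopy, and propagate the Khovanov obstruction through the satellite --- matches the paper's strategy fairly closely. However, there is a genuine gap at the point where you assert that the surfaces $\Sigma_i^{(n)}$ are \emph{minimal genus}.

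The Whitehead double of any knot bounds an obvious genus-1 Seifert surface (the standard clasp surface), so the doubled surfaces $\wh(\Sigma_i)$, which have genus $2g(\Sigma_i)\geq 2$, are \emph{never} minimal genus; see Proposition~\ref{prop:notmingenus}. Twisting the pattern does not help: a $k$-twisted Whitehead double still bounds a genus-1 surface. Moreover, twisting also spoils your topological step, since only the \emph{untwisted} double has trivial Alexander polynomial; for generic twist $n$ you lose the Conway--Powell input (Theorem~\ref{thm:conwaypowell}), so you cannot simultaneously get an infinite family and keep the ``trivial $\Delta$'' argument by varying $n$ alone.

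The paper resolves this with an additional move you are missing: after doubling, it performs a carefully chosen \emph{band sum} of $\wh(K)$ with a trefoil (Figure~\ref{fig:band_sum}), with a variable number of twists in the band. The band is placed so that (i) it avoids both $\wh(\Sigma_0)$ and $\wh(\Sigma_1)$, allowing the surfaces to be summed with the trefoil's fiber surface to give genus-3 surfaces $\Sigma_0^T,\Sigma_1^T$; (ii) it essentially crosses the standard genus-1 surface for $\wh(K)$, which raises the Seifert genus of the resulting knot $K_T$ to 3 (verified via $\widehat{\hfk}$); (iii) it kills the symmetry group of $K_T$ (verified via SnapPy), which is what upgrades ``not isotopic rel boundary'' to ``not ambiently isotopic''; and (iv) the Khovanov cycle $\Phi$ extends across the new tangle so the cobordism-map computation still goes through. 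The infinite family comes from varying the twists in this band, not from varying a Whitehead twist parameter. Your proposal would need an analogous mechanism both to force minimality and to handle the symmetry-group issue you do not currently address.
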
 

 We also show that invariants of branched covers can distinguish Seifert surfaces up to isotopy in $B^4$. Indeed, we have chosen the core examples $\Sigma_0$ and $\Sigma_1$ in such a way that they can also be distinguished using the intersection forms on their double branched covers; this implies that they are not even \textsl{topologically} isotopic in $B^4$.

\begin{theorem}\label{thm:top}
The Seifert surfaces $\Sigma_0$ and $\Sigma_1$ are not topologically isotopic in $B^4$.
\end{theorem}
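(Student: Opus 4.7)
The plan is to distinguish $\Sigma_0$ and $\Sigma_1$ via the intersection forms of their associated double branched covers. For a Seifert surface $\Sigma$ for a knot $K\subset S^3$, pushed into the interior of $B^4$, let $X_\Sigma$ denote the double cover of $B^4$ branched along $\Sigma$. This is a compact topological $4$-manifold with $\partial X_\Sigma=\Sigma_2(K)$, the double branched cover of $S^3$ along $K$. A classical computation shows that the intersection form on $H_2(X_\Sigma;\Z)$ is represented by the symmetrized Seifert form $V+V^T$, where $V$ is any Seifert matrix for $\Sigma$ computed on a basis of $H_1(\Sigma;\Z)$.

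If $\Sigma_0$ and $\Sigma_1$ were topologically ambiently isotopic in $B^4$, the ambient isotopy would lift to a homeomorphism $X_{\Sigma_0}\cong X_{\Sigma_1}$, and in particular would induce an isomorphism of integral intersection forms on $H_2$. So it suffices to (i) read Seifert matrices $V_0$ and $V_1$ off the band presentations of $\Sigma_0$ and $\Sigma_1$ in Figure~\ref{fig:46band}, using cores of the bands as a basis of $H_1$, and (ii) verify that the symmetric integer matrices $V_0+V_0^T$ and $V_1+V_1^T$ are not congruent over $\Z$.

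The usual numerical invariants of quadratic forms will not separate them: since both surfaces have minimal genus $g$ for the same knot $K$, the matrices $V_i+V_i^T$ are $2g\times 2g$, have signature $\sigma(K)$, and have determinant $\pm\det(K)$. The heart of the argument is therefore to identify a finer invariant of integral symmetric bilinear forms that tells $V_0+V_0^T$ and $V_1+V_1^T$ apart. Natural candidates include reduction modulo a small prime, comparison of the Gauss genera of the two forms, or exhibition of an integer that is represented by one form but not by the other; for the low ranks arising here, such a check can be carried out by direct enumeration. Executing this finite linear-algebraic calculation on the specific matrices read off Figure~\ref{fig:46band} will be the main technical step and completes the proof of Theorem~\ref{thm:top}.
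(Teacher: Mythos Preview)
Your plan is correct and matches the paper's approach almost exactly: the paper computes Seifert matrices $V_0=\begin{psmallmatrix}-6&-2\\-3&-2\end{psmallmatrix}$ and $V_1=\begin{psmallmatrix}-6&0\\1&-1\end{psmallmatrix}$, takes the symmetrized forms, and then uses precisely your third suggested invariant---an integer represented by one form but not the other---by observing that $V_1+V_1^T$ represents $-2$ while a short discriminant calculation shows $V_0+V_0^T$ does not represent $\pm 2$. The only thing your proposal leaves undone is this explicit computation; once you write down the matrices from Figure~\ref{fig:46band} and carry out that check, your argument is identical to the paper's.
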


\begin{figure}
    \centering
           \labellist
\pinlabel $\Sigma_0$ at 10 140
\pinlabel $\Sigma_1$ at 215 140
\endlabellist
    \includegraphics[width=.7\linewidth]{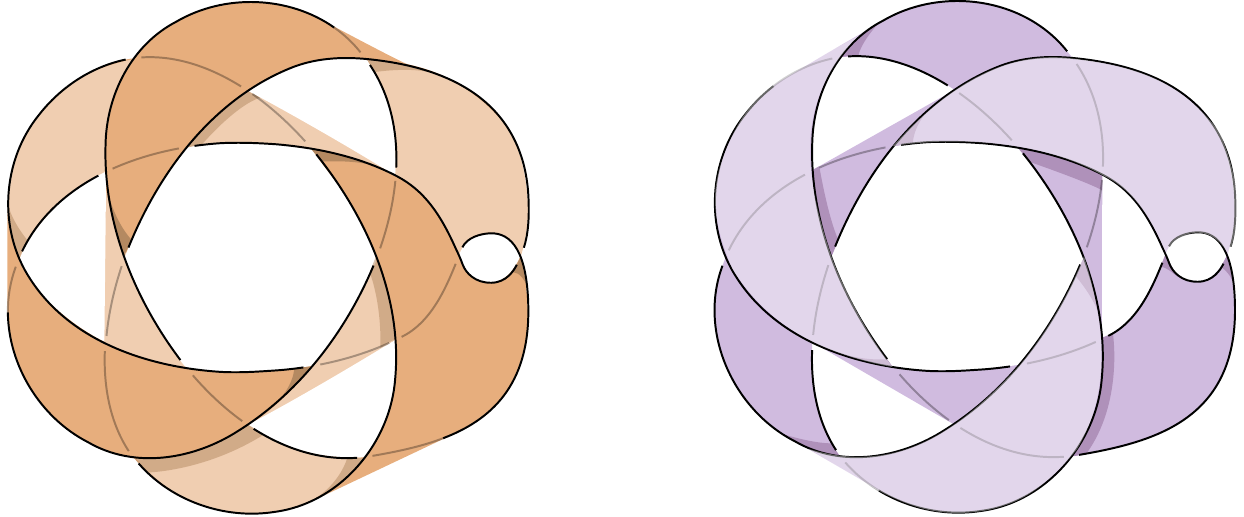}
    \caption{Two genus-1 Seifert surfaces $\Sigma_0$ (left) and $\Sigma_1$ (right) for the same knot $K$ that are not isotopic 
    even when their interiors are pushed into $B^4$.
    }
    \label{fig:46band}

\end{figure}

 We also consider the question of whether knots and links can bound infinitely many Seifert surfaces up to isotopy in $B^4$. Constraints from normal surface theory imply that hyperbolic knots and links only bound finitely many Seifert surfaces of fixed genus up to  isotopy rel.~boundary in $S^3$ \cite{schubert-soltsien} (c.f.~\cite{wilson} and \cite[Theorem~4]{johnson-pelayo-wilson}). Indeed, for any knot or link, there is an essentially unique way to generate infinite families of Seifert surfaces with the same genus and boundary, consisting of taking Haken sums of an initial Seifert surface with an incompressible torus in the knot or link complement. For  knots and links with toroidal complements, the finiteness question is more subtle and depends on whether isotopies are taken rel.~boundary. 
 We show that this subtlety persists even up to isotopy through $B^4$.

\begin{theorem}\label{thm:infinite}
There exist links in $S^3$ that bound an infinite family of connected Seifert surfaces that are freely ambiently isotopic in $S^3$ yet are pairwise distinct up to smooth isotopy rel.~boundary in $B^4$. 
\end{theorem}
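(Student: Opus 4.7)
The plan is to build the infinite family via twisting along an essential torus in the link complement, and then invoke the Khovanov cobordism map machinery from earlier sections to distinguish the pushed-in surfaces in $B^4$. Concretely, I would choose a link $L\subset S^3$ whose complement contains an essential torus $T$, along with a connected Seifert surface $\Sigma_0$ for $L$ that meets $T$ transversely in essential curves. The Dehn twist $\tau$ along $T$ is supported in a collar of $T$, hence is disjoint from $L$, so it restricts to a self-diffeomorphism of $(S^3,L)$. Setting $\Sigma_n := \tau^n(\Sigma_0)$, each $\Sigma_n$ is a connected Seifert surface for $L$ and is freely ambiently isotopic in $S^3$ to $\Sigma_0$ via the isotopy realizing $\tau^n$. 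Equivalently, $\Sigma_n$ is the Haken sum $\Sigma_0 + nT$, and differs from $\Sigma_0$ only by $n$ additional full twists of the sheets passing through a standard neighborhood of $T$.

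It then suffices to show that these twists cannot be undone by a smooth isotopy in $B^4$ rel.\ $L$. I would arrange $(L,T,\Sigma_0)$ so that the twisting operation corresponds, on the diagrammatic side of $L$, to the insertion of $n$ full twists into a specified tuple of parallel strands threaded through a disk bounded by $T$---precisely the kind of satellite operation whose effect on Khovanov cobordism maps is controlled by the stability and computability results announced in the abstract (and used, for instance, in the Whitehead-doubling calculations). Applying those formulas, I would compute the action of the cobordism map $\phi_{\Sigma_n}$ on a fixed class in $\ckh(L)$ and show that this action depends genuinely on $n$, yielding infinitely many pairwise non-isotopic surfaces rel.\ boundary in $B^4$.

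The principal obstacle is the infinite character of the conclusion: the argument must distinguish the entire sequence $\{\phi_{\Sigma_n}\}_{n\in\Z}$ in a uniform way rather than one pair at a time. This forces the initial data to be simple enough that the satellite formula gives closed-form control of $\phi_{\Sigma_n}$ as a function of $n$---for example, composition with $n$ iterates of a fixed non-nilpotent endomorphism, or an $n$-dependent shift in an integer-valued invariant---whose iterates are manifestly pairwise distinct. Balancing this against the requirement that the initial twist already be detectable in $B^4$, while also ensuring that $\Sigma_n$ remains connected after Haken summing, is the crux of the construction.
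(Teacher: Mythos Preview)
Your construction---producing the family $\{\Sigma_n\}$ by Dehn-twisting a fixed Seifert surface along an essential torus in the link complement---is exactly the one the paper uses (see Example~\ref{ex:infinite}).  The gap is in the obstruction.

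First, a structural problem: the paper works with Khovanov homology over $\Z_2$, and for any link $L$ the group $\kh(L;\Z_2)$ is finite-dimensional.  The cobordism map of a surface in $B^4$ bounded by $L$ is therefore an element of $\operatorname{Hom}_{\Z_2}(\kh(L),\Z_2)$, a \emph{finite} set.  So no choice of test class can yield infinitely many distinct values, and the Khovanov maps as used elsewhere in the paper simply cannot separate an infinite family.  Your suggestion of an ``$n$-dependent shift in an integer-valued invariant'' would require working over $\Z$ and exhibiting a concrete mechanism by which twisting increments some integer; nothing in the paper's Khovanov toolkit does this.

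Second, the ``satellite stability'' results you invoke (e.g.\ the Whitehead-doubling calculations in \S\ref{subsection:khovanov_whitehead}--\ref{subsection:sqp}) concern surfaces that are satellites with respect to the torus in question.  But Proposition~\ref{prop:twistisotopy} shows that for such surfaces, torus twists are \emph{undone} by isotopy rel.\ boundary in $B^4$.  So for the theorem at hand the torus must be chosen so that the surface is \emph{not} a satellite with respect to it (indeed, in the paper's example the torus encloses two components of $L$), and the satellite formulas do not apply.

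The paper's actual argument abandons Khovanov homology here and instead passes to the double branched covers $M_n=\Sigma_2(B^4,S_n)$.  It shows (Propositions~\ref{prop:toruscover}--\ref{prop:xpcover}) that $M_n$ is obtained from $M_0$ by $(0,2n,1)$-surgery on a square-zero torus, then caps each $M_n$ with a fixed Stein/K\"ahler piece and applies the Morgan--Mrowka--Szab\'o torus-surgery formula to compute that the resulting closed manifold has Seiberg--Witten invariant $\pm 2n$ (Proposition~\ref{prop:sw}).  This integer is exactly the kind of $n$-dependent quantity you were hoping for, but it comes from gauge theory on the branched cover rather than from Khovanov homology.
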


We distinguish the examples in Theorem~\ref{thm:infinite} by studying the relative Seiberg-Witten invariants of their branched covers. We note that simpler obstructions suffice in the case of \textsl{disconnected} spanning surfaces. Indeed, as a point of contrast with Livingston's result  \cite{livingston} (which states that {\emph{connected}} Seifert surfaces for an unlink that have the same homeomorphism type are isotopic rel.\ boundary in $B^4$), we show that  unlinks can bound infinitely many disconnected planar surfaces in $S^3$ up to topological isotopy rel.~boundary in $B^4$.

\begin{theorem}\label{thm:infiniteunlink}
The 3-component unlink bounds infinitely many disconnected Seifert surfaces  (each comprised of a disk and an annulus) that are distinct up to topological isotopy rel.~boundary in $B^4$. 
\end{theorem}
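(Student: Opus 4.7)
\emph{Proof plan.} The plan is to build an infinite family of disk-and-annulus Seifert surfaces $\{F_n\}_{n\in \Z}$ for the 3-component unlink $L = U_1\sqcup U_2\sqcup U_3$ and to distinguish them via a topological invariant of the exterior $B^4\setminus\nu F_n$ or of the double branched cover $\Sigma_2(B^4, F_n)$. I would fix the annulus $A\subset B^4$ bounded by $U_2\sqcup U_3$ as a standard, boundary-parallel annulus and vary only the disk $D_n\subset B^4$ bounded by $U_1$.

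For the construction, I would start with a trivial disk $D_0$ disjoint from $A$ and produce $D_n$ by a satellite operation that twists a sub-arc of $D_0$ around a generator of $\pi_1(B^4\setminus\nu A)\cong \Z$. Individually the disks $D_n$ are all topologically unknotted slice disks for $U_1$ and hence isotopic to $D_0$ rel.\ boundary in $B^4$, but any such isotopy must pass through $A$, so the \emph{pairs} $(D_n, A)$ can be pairwise non-isotopic rel.\ boundary in $B^4$. To detect this, the plan is to study $X_n := \Sigma_2(B^4, F_n)$, which is a compact $4$-manifold with boundary $\Sigma_2(S^3, L) \cong (S^1\times S^2)\#(S^1\times S^2)$ that is independent of $n$; hence any invariant of the interior of $X_n$ yields an invariant of $F_n$ up to isotopy rel.\ boundary. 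The winding construction should be arranged so that $X_n$ differs from $X_0$ by $n$ elementary handle-attachments or surgeries on a specific class, giving a controlled change in $H_1(X_n)$ or in the intersection form on $H_2(X_n)$.

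The main obstacle is twofold: first, showing that the winding truly produces non-isotopic pairs (since each $D_n$ is individually isotopic to $D_0$ in $B^4$, the invariant must genuinely be one of the pair and not just of $D_n$); and second, computing $X_n$ explicitly enough to certify that a topological invariant such as $H_1(X_n;\Z/2)$ or $\pi_1(X_n)$ detects $n$. I would aim to carry out both simultaneously by giving a movie or handlebody presentation of $F_n$, translating it into a Kirby diagram for $X_n$ via the standard double-branched-cover recipe, and reading off the invariants directly from that diagram.
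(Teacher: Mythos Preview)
Your overall strategy---produce the family by winding one component about another and then pass to double branched covers---matches the paper's. The gap is in the very last step. The surfaces $F_n$ you describe are \emph{freely} isotopic to $F_0$: the winding is realised by an ambient diffeomorphism $\tau$ of $B^4$ (a torus twist supported near the boundary of a neighbourhood of $A$), and since the relevant torus bounds a solid torus in $S^3$, $\tau$ is isotopic to the identity on $B^4$. Consequently the branched covers $X_n=\Sigma_2(B^4,F_n)$ are all abstractly homeomorphic to $X_0$, and any invariant of the homeomorphism type of $X_n$---in particular $H_1(X_n)$, $\pi_1(X_n)$, or the intersection form---is constant in $n$. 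So the invariants you propose cannot see the difference; the obstruction is genuinely a rel-boundary phenomenon, invisible to the interior of $X_n$ alone.

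The paper circumvents this with a band trick that you are missing. One fixes an auxiliary band $b\subset S^3$ attached to $\partial S_0$ and forms the connected surfaces $S_n^b=S_n\cup b$. If $S_0$ and $S_n$ were isotopic rel boundary in $B^4$, then so would be $S_0^b$ and $S_n^b$. Now pull $S_n^b$ back by $\tau^{-n}$: since $\tau^{-n}(S_n)=S_0$ one gets $\tau^{-n}(S_n^b)=S_0\cup\tau^{-n}(b)$, a surface built from the \emph{original} $S_0$ but with the band wound $n$ times. This converts the rel-boundary question into a free-isotopy question between $S_0\cup b$ and $S_0\cup\tau^{-n}(b)$, and \emph{now} the homeomorphism type of the double branched cover changes: one computes $H_1\bigl(\Sigma_2(B^4,S_0\cup b)\bigr)\cong\Z$ versus $H_1\bigl(\Sigma_2(B^4,S_0\cup\tau^{-n}(b))\bigr)\cong\Z_{2|n|}$. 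The band is what ``anchors'' the boundary data so that an absolute invariant can detect the relative twisting; without it, your Kirby-diagram computation will simply return the same manifold for every $n$.
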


As mentioned above, our smooth obstructions hinge on new developments for calculating the cobordism maps on Khovanov homology. This reveals several surprising features of these cobordism maps, including their behavior under certain satellite constructions, as well as certain band sums that allow us to produce minimal genus Seifert surfaces. As an illustration of this, we note that Whitehead doubling plays particularly well with these cobordism maps, enabling us to prove the following general result:

\begin{theorem}\label{thm:sqp}
If $J$ is a nontrivial strongly quasipositive knot, then $\wh(J)$ bounds Seifert surfaces of equal genus that are topologically isotopic rel.~boundary in $B^4$ yet are not smoothly ambiently isotopic in $B^4$.
\end{theorem}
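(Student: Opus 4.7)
The plan is to exhibit two genus-one Seifert surfaces $\Sigma_0,\Sigma_1\subset S^3$ for $\wh(J)$, show that they become topologically isotopic rel.\ boundary in $B^4$, and then use the Khovanov-theoretic cobordism invariants developed earlier in the paper to show that they are not smoothly ambiently isotopic in $B^4$.

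For the construction, I would take $\Sigma_0$ to be the standard genus-one Seifert surface of $\wh(J)$---the image of the once-punctured torus spanning the Whitehead pattern inside a $0$-framed tubular neighborhood of $J$. For $\Sigma_1$, I would exploit the strong quasipositivity of $J$, which provides a Bennequin Seifert surface $F_J$ whose push-in to $B^4$ is known (via work of Plamenevskaya and Hayden--Sundberg) to carry a non-vanishing canonical Khovanov class. I would build $\Sigma_1$ from $\Sigma_0$ by a local band move supported in a neighborhood of the Whitehead clasp that incorporates $F_J$ without increasing the genus---the general analogue, for an arbitrary strongly quasipositive $J$, of the pair pictured in Figure~\ref{fig:46band}---arranged so that the canonical class of $F_J$ is visible in the Khovanov cobordism invariant of $\Sigma_1$.

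For the topological statement, observe that $\wh(J)$ has Alexander polynomial $1$, so by Freedman's theorem it is topologically slice and its Blanchfield pairing is trivial. Any two genus-one Seifert surfaces for $\wh(J)$ therefore have isometric (trivial) Blanchfield forms, and the topological classification of Seifert surfaces in $B^4$ via the Blanchfield pairing forces $\Sigma_0$ and $\Sigma_1$ to be topologically isotopic rel.\ boundary in $B^4$.

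For the smooth obstruction, I would compare the Khovanov cobordism invariants $\phi_{\Sigma_i}(1)\in\kh(\wh(J))$ attached to the pushed-in surfaces. The Whitehead-doubling / satellite stability formulas developed earlier in the paper express $\phi_{\Sigma_1}(1)$ in terms of the non-zero canonical class associated to $F_J$, producing a distinguished element of $\kh(\wh(J))$ that $\phi_{\Sigma_0}(1)$ does not realize---even after the natural action of the self-diffeomorphism group of $(S^3,\wh(J))$ that governs ambient isotopy in $B^4$. The principal obstacle is making the satellite formula for cobordism maps under Whitehead doubling precise enough that the canonical class of $F_J$ survives the clasp intact and remains distinguishable in $\kh(\wh(J))$; this is the technical heart of the theorem and is precisely the content of the stability-under-satellites computations advertised in the introduction.
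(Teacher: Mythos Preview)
Your proposal has a genuine structural gap: you are working at the wrong genus and with the wrong pair of surfaces. You take both $\Sigma_0$ and $\Sigma_1$ to be genus-one Seifert surfaces for $\wh(J)$, with $\Sigma_1$ built from the standard one by a local move ``incorporating $F_J$ without increasing the genus.'' But if $J$ is nontrivial then $g(F_J)\ge 1$, and there is no way to absorb a positive-genus subsurface via a local band move and stay at genus one; the analogy with the pair in Figure~\ref{fig:46band} is misleading, since those surfaces arise from a completely different construction (cutting a closed surface) for one specific knot, not from the satellite mechanism relevant here.

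The paper's surfaces are instead of genus $2g$, where $g=g_3(J)$: one is the Whitehead double $\wh(S)$ of a strongly quasipositive Seifert surface $S$ for $J$, and the other is the standard genus-one Seifert surface for $\wh(J)$ stabilized $2g-1$ times. This choice is what makes the smooth obstruction clean. A stabilized surface induces the \emph{zero} map on $\kh$ over $\mathbb{Z}_2$, whereas Proposition~\ref{prop:sqp} shows $\wh(S)$ induces a nonzero map. Because ``induces the zero map'' (equivalently, destabilizability) is invariant under any diffeomorphism of $B^4$, the ambient-isotopy conclusion follows immediately---no analysis of $\sym(\wh(J))$ or of how specific Khovanov classes transform is required. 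Your plan to distinguish two particular classes modulo the symmetry-group action is both harder and, as formulated, not actually carried out. The topological side is right in spirit (Alexander polynomial one), though the paper invokes Conway--Powell's Theorem~\ref{thm:conwaypowell} directly rather than Blanchfield forms.
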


It is interesting to compare these tools from Khovanov homology with  their counterparts in Floer theory. While it seems likely that one could use the cobordism maps in knot Floer homology  $\widehat{\hfk}$ to distinguish pairs of minimal genus Seifert surfaces (perhaps $\Sigma_0$ and $\Sigma_1$), these maps do not appear to be as amenable to direct calculation as the cobordism maps in Khovanov homology.  Moreover, given that we use elementary methods to show that $\Sigma_0$ and $\Sigma_1$  are not even topologically isotopic, it would be more interesting to see the $\widehat{\hfk}$ cobordism maps used to distinguish a pair of Seifert surfaces that are topologically isotopic (or are at least not easily distinguished topologically). Distinguishing non-minimal genus Seifert surfaces appears to be especially subtle, given the close relationship between $\smash{\widehat{\hfk}}$ and the Seifert genus; we discuss this further at the end of Section~\ref{section:khovanov}.

\subsection*{Organization} 
In Section~\ref{sec:constructions}, we provide an overview of our underlying constructions of Seifert surfaces.    In Section~\ref{section:khovanov}, we perform our primary  computations of  Khovanov cobordism maps, distinguishing the core examples $\Sigma_0$ and $\Sigma_1$, as well as their Whitehead doubles $\wh(\Sigma_0)$ and $\wh(\Sigma_1)$. We generalize aspects of these concrete calculations by proving Theorem~\ref{thm:sqp} in \S\ref{subsection:sqp}. In \S\ref{subsec:furtherexamples}, we modify the earlier  examples to produce minimal genus Seifert surfaces that are not smoothly ambiently isotopic, concluding the proof of  Theorem~\ref{thm:exotic}. Finally, in Section~\ref{sec:branched}, we turn to obstructions arising from branched covers, proving Theorems~\ref{thm:top} and~\ref{thm:infinite}.

    \subsection*{Conventions} In this paper, we will always specify whether results hold in the smooth or locally flat category, as we work in each at different times. We will forever refer to the surfaces in Figure~\ref{fig:46band} as $\Sigma_0$ and $\Sigma_1$, and their boundary as $K$.  For convenience, we work with Khovanov homology over $\Z_2$ coefficients.

\subsection*{Acknowledgements} 
Some of the authors learned of the motivating question of this paper from Peter Teichner in spring 2021; we thank him for many interesting conversations and helpful comments. 
This project came about during the ``Braids in Low-Dimensional Topology" conference at ICERM in April 2022. We thank the organizers for putting together an engaging conference.

\section{Constructions and counterexamples}\label{sec:constructions}

 For context, we begin by revisiting some of the historical constructions of pairs of distinct Seifert surfaces with the same genus and boundary in \S\ref{subsec:history}. We then present our three main constructions in \S\ref{subsec:cut}-\ref{subsec:satellite}. These sections state more precise versions of the results from \S\ref{sec:intro}, whose proofs will then be given in \S\ref{section:khovanov}-\S\ref{sec:branched}.

\subsection{Historical constructions.} \label{subsec:history}
There are many known examples of knots bounding genus-$g$ incompressible Seifert surfaces that are not isotopic in $S^3$ (e.g.\  \cite{alford, schaufele, daigle, lyon, trotter, eisner, parris, gabaidetecting, kobayashi, kakimizu, HJS, vafaee}). Most of these examples of distinct Seifert surfaces are known to become isotopic when their interiors are pushed into $B^4$.  The pairs of surfaces of \cite{alford,schaufele,daigle} are constructed by choosing one of two tubes to include in the surface that become isotopic when pushed deeper into $B^4$ (see \cite[p123]{rolfsen}); the pretzel surfaces of \cite{trotter,parris} (c.f.\ \cite{kobayashipretzel}) were shown to be isotopic in $B^4$ by Livingston \cite[Section 6]{livingston}, who also showed that any connected, genus-$g$ Seifert surfaces for an unlink become isotopic in $B^4$; the surface families in \cite{eisner,kakimizu} are obtained by twisting a surface many times about a satellite torus, which can be undone by isotopy in $B^4$ (see Proposition~\ref{prop:twistisotopy}); the examples of \cite{gabaidetecting,kobayashi,HJS,vafaee} arise from a Murasugi sum construction and are isotopic in $B^4$ via an isotopy involving pushing the plumbing region deeper into $B^4$ (see \cite[Remark 3.1]{vafaee}).

However, for one fairly well-known family of examples due to Lyon \cite{lyon}, there is no clear isotopy between the surfaces even when they are pushed into $B^4$. Lyon's examples are 
genus-1 Seifert surfaces obtained by attaching a common band to each of two different choices of annuli with the same boundary. Together, those underlying annuli form  the standard torus in $S^3$, with their common boundary given by an unoriented $(6,-8)$ torus link. Lyon distinguished these surfaces in $S^3$ via the fundamental groups of their complements; Altman \cite{altman} studied the same surfaces and showed that  they are also distinguished by the sutured Floer polytopes of their complements.

\subsection{Cutting up closed surfaces.}\label{subsec:cut}
Our first approach is based on a generalization of Lyon's construction, and is animated by a simple observation: A link $L \subset S^3$ bounds a pair of genus-$g$ Seifert surfaces with disjoint interiors if and only if $L$ lies on a closed genus-$2g$ surface in $S^3$ and separates it into a pair of genus-$g$ surfaces. 

\begin{example} The core examples $\Sigma_0$ and $\Sigma_1$ from Figure~\ref{fig:46band} provide pair of \mbox{genus-1} Seifert surfaces for a twisted Whitehead double $K$ of the left-handed trefoil. As illustrated in Figure~\ref{fig:union}, the union of $\Sigma_0$ and $\Sigma_1$ is a standard, closed, genus-2 surface in $S^3$. (Here the underlying annuli are bounded by an unoriented $(4,-6)$ torus link, and we attach a common band to the right-hand side of each annulus.)
\end{example}

\begin{figure}[h]
\centering
  \captionsetup{width=.92\linewidth}
  \includegraphics[height=36mm]{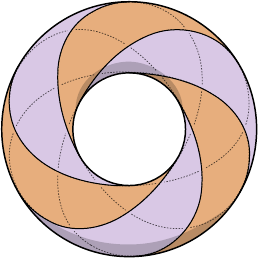}
  \hspace{.5in}
  \includegraphics[height=36mm]{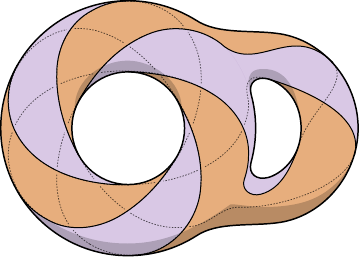}
  \caption{Left: two annuli bounded by an unoriented $(4,-6)$ torus link whose union is a standard torus. Right: the surfaces $\Sigma_0,\Sigma_1$, whose union is a standard genus-2 surface.}
  \label{fig:union}
  
\end{figure}

We distinguish $\Sigma_0$ and $\Sigma_1$ using Khovanov homology in the proof of Proposition~\ref{prop:khovanov}. In Theorem~\ref{thm:doublecover}, we further distinguish $\Sigma_0$ and $\Sigma_1$ up to topological isotopy by showing their double branched covers are not homeomorphic. 

\begin{example}
 A second example of this form is shown in Figure~\ref{fig:wh-tref-embed}, which depicts a genus-4 surface cut into a pair of genus-2 Seifert surfaces for the Whitehead double of the right-handed trefoil knot; after isotopy, these are redrawn in Figure~\ref{fig:wh-tref}. These examples turn out to be topologically isotopic yet smoothly distinct (by Theorem~\ref{thm:sqp}).
\end{example}

\begin{figure}
\center
\includegraphics[width=\linewidth]{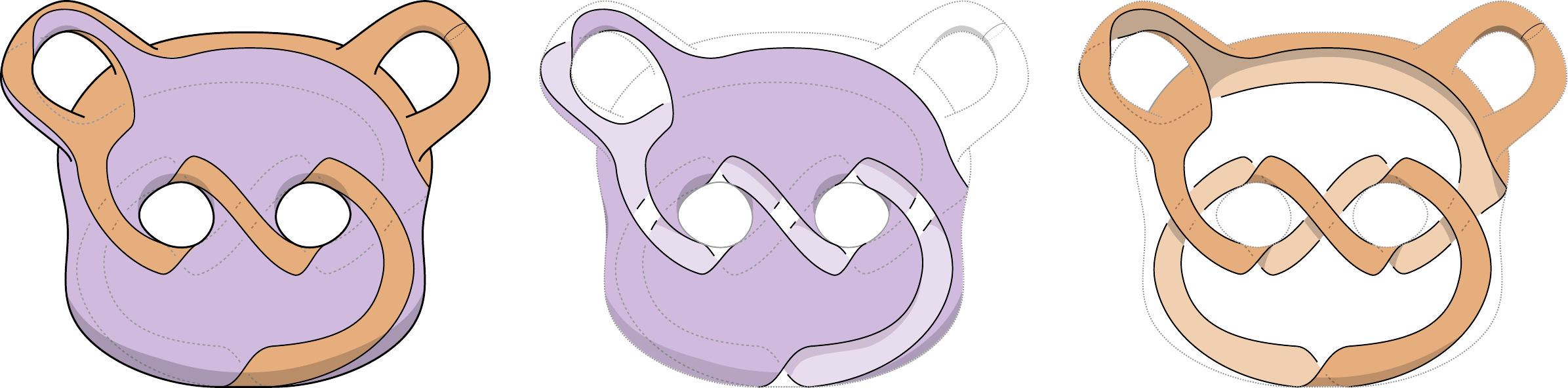}
\caption{Cutting a standard genus-4 surface into a pair of genus-2 Seifert surfaces for the Whitehead double of the right-handed trefoil.}
\label{fig:wh-tref-embed}
\end{figure}

\subsection{Haken sums and torus twists.}\label{subsec:spinning} Using normal surface theory, Schubert-Soltsien \cite{schubert-soltsien} showed that non-satellite knots in $S^3$ have only finitely many  Seifert surfaces of fixed genus up to isotopy rel.~boundary   (known as \emph{strong equivalence}), and the same holds for links with atoroidal complements  \cite[Theorem~4]{johnson-pelayo-wilson}. In contrast, satellite knots and links may have infinitely many Seifert surfaces up to isotopy rel.~boundary in $S^3$. In this case, infinite families may be generated by fixing an initial Seifert surface $S$ and taking Haken sums of $S$ with copies of an incompressible torus $T$ in the knot or link complement $S^3 \setminus \partial S$.

An important example of this operation is given by \emph{torus twists}, defined as follows: Suppose that $S$ intersects $T$ transversely along a collection of essential curves in $T$, all of which will necessarily be parallel.  Dehn's lemma implies that $T$ bounds a solid torus $V$ in $S^3$, so we may choose coordinates identifying $T$ with $T^2=S^1 \times S^1$  such that each curve $S^1 \times \{\text{pt}\}$ bounds a disk in $V$. (The other coordinate direction $\{\text{pt}\}\times S^1$ may be chosen freely, e.g., to be nullhomologous in $S^3 \setminus \mathring{V}$, or to coincide with the curves $S \cap T$.) Extend these coordinates to a neighborhood $T^2 \times [0,1]$ of $T=T^2 \times \{1/2\}$. We define a \emph{meridional twist along $T$} to be a diffeomorphism that is defined on $T^2 \times [0,1]$ by $(x,y,t)\mapsto(x+2\pi t,y,t)$ and by the identity on the rest of $S^3$; a \emph{longitudinal twist} is defined analogously.

For a schematic depiction of the effect of a meridional twist on $S$,  see Figure~\ref{fig:roll_intro}. It is straightforward to show that Seifert surfaces related by torus twists are freely isotopic in $S^3$. (The isotopy can be constructed directly using the fact that $T$ bounds a solid torus in $S^3$, c.f.~\cite[p332]{eisner}.) However, they need not be isotopic rel.~boundary.

\begin{figure}
  \labellist
\hair 2pt
\pinlabel $S$ at 51 25
\pinlabel $T$ at 200 25
\pinlabel $S'$ at 346 25
\endlabellist
\center
\includegraphics[width=.825\linewidth]{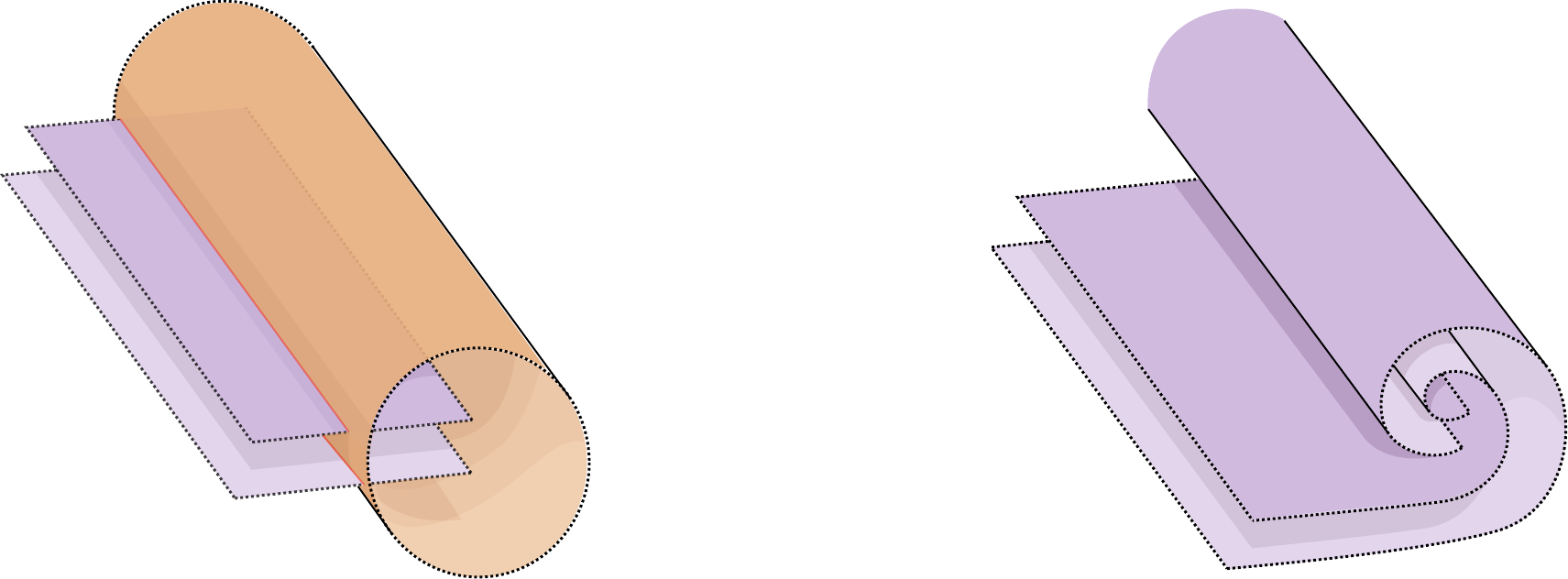}
\caption{Twisting a surface $S$ along a torus $T$ producing a new surface $S'$, where $T$ intersects $S$ transversely in its interior.}\label{fig:roll_intro}
\end{figure}

\begin{example}\label{ex:infinite}
Consider the surface $S_0$ in Figure~\ref{fig:infinite}; it is constructed by taking a pair of interlocked annuli bounded by an unoriented $(4,8)$-torus link, then joining these annuli by a twisted band. Its boundary $L=\partial S_0$ is a 3-component link, and the link complement $S^3 \setminus L$ contains an essential torus $T$ meeting $S_0$ transversely along a pair of simple closed curves. Twisting $S_0$ along $T$ yields an infinite family of surfaces $S_n$ for $L$. In Section~\ref{sec:branched}, we will show that the Seifert surfaces $S_n$ are not smoothly isotopic rel.~boundary in $B^4$ (proving Theorem \ref{thm:infinite}).
\end{example}

\begin{figure}
\centering
  \captionsetup{width=.9\linewidth}
  \includegraphics[width=\linewidth]{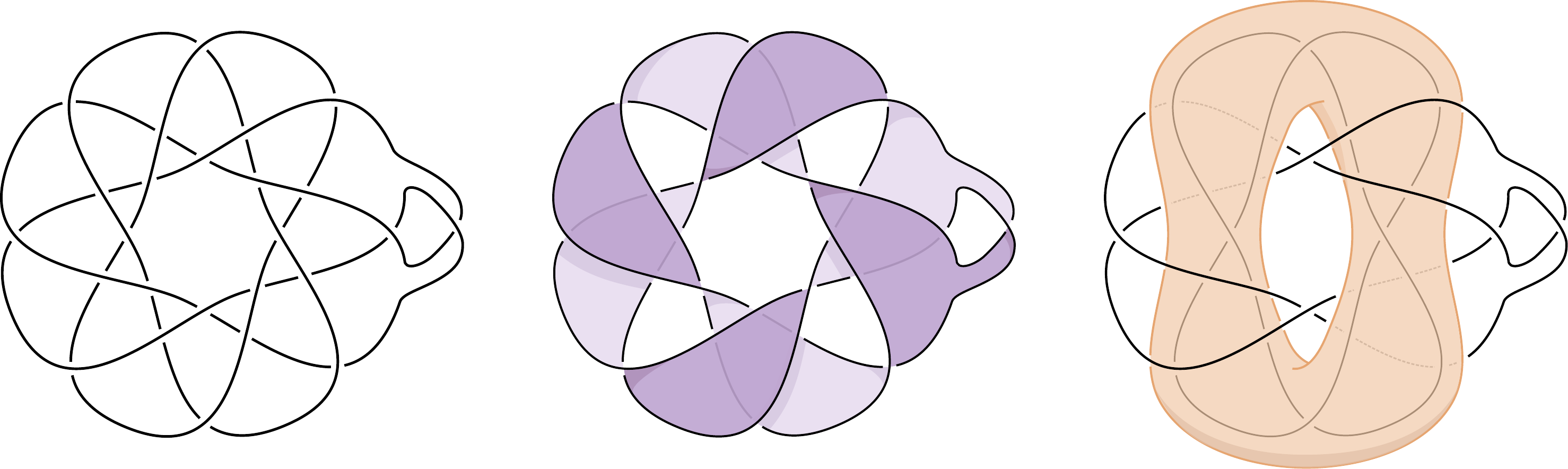}
  \includegraphics[width=45mm]{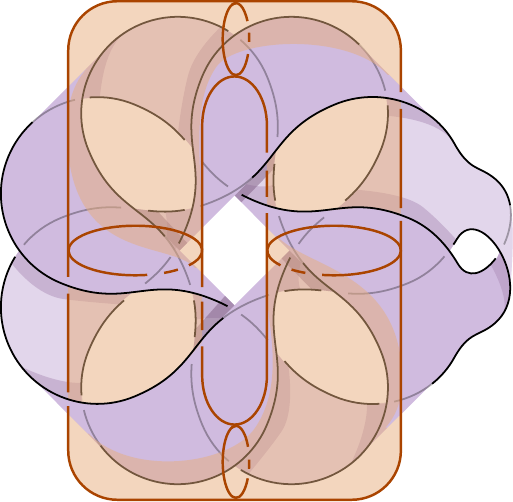}
  \caption{{Twisting the top middle surface about the torus featured in the top right yields an infinite family of Seifert surfaces for a 3-component link that are not isotopic rel.\ boundary. See Example \ref{ex:infinite} and Theorem \ref{thm:infinite}.}}
  \label{fig:infinite}
\end{figure}

\subsection{Satellite surfaces}\label{subsec:satellite}
 Satellite operations provide an important construction of knots in 3-manifolds, and analogous constructions can be applied to Seifert surfaces. Let $L$ be a link in $S^3$ whose complement $S^3 \setminus L$ contains an incompressible torus $T$. We say that a spanning surface $S \subset S^3$ for $L$ is a \emph{satellite with respect to $T$} if $T$ is transverse to $S$ and bounds a solid torus $V$ such that: (i) $L=\partial S$ lies in $V$, and (ii) $S$ meets $S^3 \setminus \mathring{V}$ in a collection of parallel push-offs of a fixed surface $F$ bounded by a longitude on $\partial V$. Technically, we allow $F$ to be empty. Note that if $F$ is nonempty, then it must have positive genus, otherwise the torus $T$ would be compressible. 

In practice, we will construct such surfaces by fixing an initial Seifert surface $F$ for a knot $J$ and a tubular neighborhood $V$ of $J=\partial F$, then choosing $n$ parallel push-offs of $F$ and gluing them to a ``pattern surface'' lying entirely in $V$ (for examples, see Figure~\ref{fig:satellite}). The resulting surface $S$ is a satellite with respect to $T=\partial V$.

\begin{figure}
    \includegraphics[width=.925\linewidth]{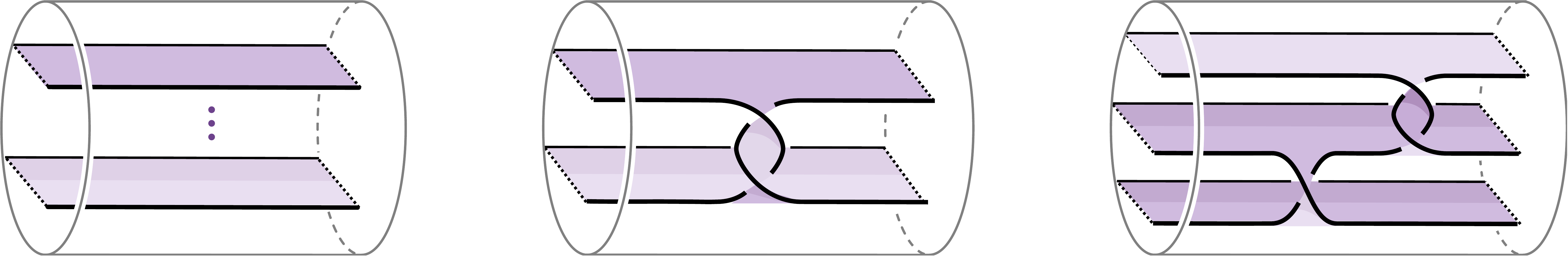}
    
    \captionsetup{width=.9\linewidth}
    \caption{From left to right, satellite pattern surfaces bounded by the $n$-copy pattern,  the positive Whitehead pattern, and  the Mazur pattern.}

    \label{fig:satellite}
\end{figure}

\begin{example}\label{ex:WhTref}
Let $J$ denote the right-handed trefoil, and let $\wh(J)$ denote its positively clasped, untwisted Whitehead double, illustrated on the left side of Figure~\ref{fig:wh-tref}. The middle of Figure~\ref{fig:wh-tref} depicts a genus-2 satellite Seifert surface for $\wh(J)$ built from  two copies of the standard fiber surface for $J$. We can produce a second genus-2 Seifert surface for $\wh(J)$ by stabilizing the standard genus-1 Seifert surface for $\wh(J)$, as shown on the right side of Figure~\ref{fig:wh-tref}. (Technically, this is also a satellite with respect to the natural torus, where the subsurface $F$ is empty.)   We will distinguish these surfaces up to smooth isotopy in $B^4$ using Khovanov homology in \S\ref{subsection:sqp}. However, by Theorem~\ref{thm:conwaypowell} below, these surfaces turn out to be topologically isotopic rel.~boundary in $B^4$.
\end{example}

\begin{figure}
\center
\includegraphics[width=\linewidth]{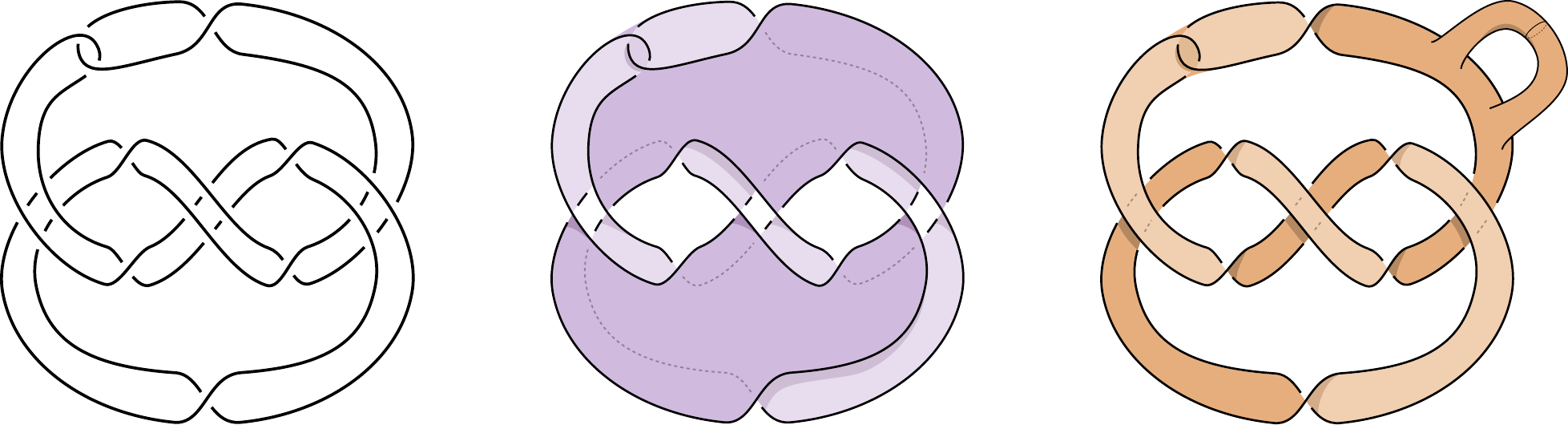}
\captionsetup{width=.925\linewidth}
\caption{Left: $\wh(J)$, where $J$ is the right-handed trefoil. Middle:  a genus-2 surface obtained by Whitehead doubling the genus-1 Seifert surface for $J$. Right: a genus-2 surface obtained by adding a trivial tube to a genus-1 Seifert surface for $\wh(J)$.} 
\label{fig:wh-tref}
\end{figure}

Continuing the analogy with satellite knots, we say that the \emph{wrapping number} of a satellite surface $S$ with respect to a torus $T$ is the number of components in $S \cap T$, and the \emph{winding number} is the absolute value of the signed count of  components in $S \cap T$ as a  collection of oriented curves (i.e., the divisibility of $[S \cap T]$ in $H_1(T)$).

Our final construction  generalizes Example~\ref{ex:WhTref}:

\begin{proposition}\label{prop:notmingenus}
    Let $S \subset S^3$ be a connected, positive-genus Seifert surface for a link $L$, and suppose that $S$ is a satellite  with respect to an incompressible torus in $S^3 \setminus L$. If the wrapping number of $S$ exceeds the winding number, then $L$ has another Seifert surface of genus strictly less than $g(S)$.
\end{proposition}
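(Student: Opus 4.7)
The plan is to modify $S$ across the torus $T$ to produce a new Seifert surface $S'$ for $L$ with $g(S') < g(S)$. Writing $n = n_+ + n_-$ for the wrapping number, where $n_\pm$ counts the components of $S \cap T$ of each orientation, and $w = |n_+ - n_-|$ for the winding number, the hypothesis $n > w$ forces both $n_+ \geq 1$ and $n_- \geq 1$. Since the components of $S \cap T$ are mutually parallel on $T$, traversing $T$ transverse to these curves reveals two adjacent components $c_i, c_{i+1}$ of opposite orientation, which cobound an annulus $A \subset T$.

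Next I set up the modification. Write $S = P \cup F_1 \cup \cdots \cup F_n$, where $P = S \cap V$ is the pattern piece and each $F_j$ is the push-off of the fixed surface $F$ attached to $P$ along $c_j$. Since the wrapping number is at least two, $F$ is nonempty and hence has $g(F) \geq 1$ (else $T$ would be compressible), so $\chi(F) = 1 - 2g(F) \leq -1$. Moreover, since $S$ is connected and each $F_j$ attaches to $P$ along the single circle $c_j$, the piece $P$ is necessarily connected. I then set
\[
S' := P \cup A \cup \bigcup_{j \neq i,\, i+1} F_j,
\]
viewing $A$ as glued to $P$ along $c_i \cup c_{i+1}$; a small perturbation in a neighborhood of the glued region produces a smoothly embedded surface in $S^3$. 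The boundary $\partial S' = L$ is unchanged, $S'$ is connected (every remaining piece attaches to $P$), and $S'$ is orientable since the opposite orientations of $c_i, c_{i+1}$ on $T$ allow an orientation of $A$ whose induced boundary is compatible with the orientation of $P$.

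Finally, I compute
\[
\chi(S') - \chi(S) \;=\; \chi(A) - \chi(F_i) - \chi(F_{i+1}) \;=\; 0 - 2\bigl(1 - 2g(F)\bigr) \;=\; 4g(F) - 2 \;\geq\; 2,
\]
so, as $S$ and $S'$ are connected with the same boundary $L$, the formula $\chi = 2 - 2g - |L|$ yields
\[
g(S') \;=\; g(S) - \bigl(2g(F) - 1\bigr) \;\leq\; g(S) - 1 \;<\; g(S).
\]
The main subtlety in this plan is verifying the orientability and smooth embedding of $S'$ along the newly attached annulus; these reduce to a local argument exploiting the opposite orientations of $c_i, c_{i+1}$ on $T$, allowing the induced boundary orientations of $P$ and $A$ to match.
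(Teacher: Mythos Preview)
Your proof is correct and follows essentially the same approach as the paper: locate two adjacent curves of $S\cap T$ with opposite orientations, replace the two parallel copies of $F$ they bound by the annulus $A\subset T$ between them, and compute the resulting genus drop $g(S')=g(S)-2g(F)+1$. Your write-up is slightly more explicit about connectedness of $P$ and orientability of $S'$, but the argument is the same.
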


\begin{proof}Let $T \subset S^3 \setminus L$ be the incompressible torus with respect to which $S$ is a satellite,  and let $r$ and $w$ denote the wrapping and winding numbers of $S$ with respect to $T$, respectively. By definition, $S$ meets one of the components of $S^3 \setminus T$ in $r$ copies of a subsurface $F$. Since $r>w \geq 0$, $F$ must be a nonempty surface with $g(F)\geq 1$.  The torus $T$ intersects $S$ in $r$ parallel curves and, since $r >w$, there must be at least one pair of adjacent curves $C_\pm$ whose orientations (induced by those on $S$ and $T$) are opposite. These curves cobound an annulus $A \subset T$ whose interior is disjoint from $S$, so we may construct a new Seifert surface $S'$ for $L$ by removing the two parallel copies of $F$ bounded by $C_\pm$  and gluing in the annulus $A$. Observe that 
$$g(S')=g(S)-2g(F)+1<g(S),$$
where the $+1$ term in the first equality appears because  $A$ joins two components of a connected surface.
\end{proof}

Proposition \ref{prop:notmingenus} gives rise to a construction of pairs of equal-genus satellite Seifert surfaces for many satellite knots: an initial satellite surface $S$, and a satellite surface of lower wrapping number obtained from the lower-genus surface $S'$ (as constructed in the proof of Proposition~\ref{prop:notmingenus}) by stabilizing it until it has the same genus as $S$. The primary situation that we consider involves Whitehead doubling, as illustrated by Example~\ref{ex:WhTref} and discussed further in \S\ref{subsec:whitehead}.

Before focusing on Whitehead doubles, we pause to consider the effect of torus twisting (as discussed in \S\ref{subsec:spinning}) on satellite surfaces. As demonstrated in  \cite{eisner,kakimizu}, applying torus twists to a satellite Seifert surface can yield infinite families of Seifert surfaces that are distinct up to isotopy rel.~boundary in $S^3$. However, the next proposition shows that such surfaces are smoothly isotopic rel.~boundary in $B^4$.

\begin{proposition}\label{prop:twistisotopy}
Let $S$ be a Seifert surface in $S^3$ that is a satellite with respect to a torus $T$. If $S'$ is obtained from $S$ by meridional twists along $T$, then $S$ and $S'$ are smoothly isotopic rel.~boundary in $B^4$.
\end{proposition}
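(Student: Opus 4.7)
The plan is to construct an ambient smooth isotopy of $B^4$, fixing $L = \partial S$ pointwise, that carries $S$ to $S'$. By iterating, it suffices to treat the case of a single meridional twist $\phi$ along $T$. Since $\phi$ is supported in a collar of $T$ disjoint from $L \subset V$, the twist $\phi$ fixes $L$ pointwise, and its restriction $\phi|_V$ is a diffeomorphism of $V$ supported on a collar of $\partial V$ in $V$ that acts by a partial rotation in the meridional direction.

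First, I would produce a smooth isotopy $\phi_s$ of diffeomorphisms of $V$ from the identity to $\phi|_V$, each fixing $L$: simply scale the rotation angle linearly in $s$, so that $\phi_s|_{\partial V}$ is the partial meridional rotation of $T$ by angle $\pi s$. Each $\phi_s$ is supported in the collar of $\partial V$ and hence is the identity on $L$. The obstacle to using $\phi_s$ to isotope $S$ to $\phi(S)$ in $S^3$ rel $L$ is that the nonzero boundary rotation does not extend continuously to the identity on $V' = S^3 \setminus \mathrm{int}(V)$; this is precisely the obstruction that the extra dimension of $B^4$ resolves.

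To realize the isotopy ambiently in $B^4$, push $V$ slightly into $B^4$ to form a product region $V \times [0, \delta] \subset B^4$ with $V \times \{0\} = V \subset \partial B^4$, and define $\Psi_s \colon B^4 \to B^4$ as follows. On $V \times [0, \delta]$, set $\Psi_s(x, u) = (\phi_{s \cdot g(u)}(x), u)$ for a smooth bump $g \colon [0, \delta] \to [0, 1]$ with $g(0) = 1$ and $g(\delta) = 0$. On a thin collar of $\partial V \times [0, \delta]$ in the $V'$-direction, let $\Psi_s$ interpolate smoothly from the partial meridional rotation of $T$ (by angle $\pi s \cdot g(u)$) at the inner boundary down to the identity at the outer boundary; elsewhere, set $\Psi_s$ equal to the identity. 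Because partial meridional rotations of $T$ are always diffeomorphisms of $T$, the pieces glue into a well-defined smooth diffeomorphism at each $s$. One then checks that $\Psi_0 = \mathrm{id}$, that $\Psi_s$ fixes $L$ for all $s$ (since $L$ lies outside the support of each $\phi_s$), and that $\Psi_1|_{\partial B^4}$ agrees with $\phi$ on a neighborhood of $T$, so that $\Psi_1(S) = \phi(S)$.

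The main obstacle is the bookkeeping required to verify that the three pieces of $\Psi_s$ glue into a smooth ambient isotopy at each time; this reduces to matching the rotation on $V \times [0, \delta]$ with the tapering piece along the shared boundary $\partial V \times [0, \delta]$, and matching the tapering to the identity at the outer boundary of its collar. Both checks are routine once bump profiles are chosen appropriately. With this, the construction realizes the isotopy $\phi_s$ of $V$ as an ambient isotopy of $B^4$ via the fourth dimension, yielding the desired smooth isotopy from $S$ to $\phi(S)$ rel $L$.
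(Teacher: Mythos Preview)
There is a genuine gap. Your ambient isotopy $\Psi_s$ is a family of diffeomorphisms of $B^4$, hence each $\Psi_s$ preserves $\partial B^4$ setwise. In particular, if $S\subset S^3=\partial B^4$, then $\Psi_s(S)\subset S^3$ for every $s$, so your construction never leaves the $3$--sphere. But meridional twists along the satellite torus can produce Seifert surfaces that are \emph{not} isotopic rel boundary in $S^3$ (this is exactly the phenomenon the proposition is meant to address; see the remark following it in the paper). Thus no isotopy of the type you build can take $S$ to $S'$ in general. Concretely, your claim that $\Psi_1|_{\partial B^4}$ agrees with $\phi$ near $T$ fails on the $V'$-side: with your conventions, $\phi$ on the $V'$-collar carries the rotation angle from $\pi$ up to $2\pi$, while your ``taper to the identity'' carries it from $\pi$ down to $0$. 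These differ by a full meridional Dehn twist supported in the $V'$-collar. You cannot repair this by letting the outer target be $2\pi s$ instead of $0$, since for $s\notin\Z$ that map is not the identity and will not glue to the rest of $V'$.

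The paper's argument does not try to realize $\phi$ by an ambient isotopy of $B^4$ at all. Instead it exploits the satellite hypothesis: the part of $S$ outside $V$ consists of parallel copies of a fixed subsurface $F$. Undoing one twist from the outermost sheet amounts to replacing a copy $F_-$ by the adjacent copy $F_+$; these share a boundary curve $\gamma$ lying in the \emph{interior} of $S$ and cobound a product in $S^3$, so $F_-$ is isotopic to $F_+$ rel $\gamma$ in $S^3$. Pushing the interiors of the intermediate surfaces into $B^4$ turns this into an isotopy of the whole Seifert surface rel $L$, and iterating removes all the twists. The essential point your approach misses is that one must move pieces of the surface whose boundary lies in $\mathrm{int}(S)$, not try to isotope the twist diffeomorphism itself rel $L$.
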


\begin{proof}
By definition, the torus $T$ bounds a solid torus $V\subset S^3$ that contains the link $L=\partial S$, and $S$ meets $S^3 \setminus \mathring{V}$ in sheets consisting of parallel copies of a fixed subsurface $F$. The surface $S'$ is obtained from $S$  by twisting these sheets  around the $T=\partial V$ as depicted on the left side of Figure~\ref{fig:unroll} (or its mirror through a horizontal plane).

\begin{figure}\center
\def\svgwidth{.95\linewidth}
\begingroup%
  \makeatletter%
  \providecommand\color[2][]{%
    \errmessage{(Inkscape) Color is used for the text in Inkscape, but the package 'color.sty' is not loaded}%
    \renewcommand\color[2][]{}%
  }%
  \providecommand\transparent[1]{%
    \errmessage{(Inkscape) Transparency is used (non-zero) for the text in Inkscape, but the package 'transparent.sty' is not loaded}%
    \renewcommand\transparent[1]{}%
  }%
  \providecommand\rotatebox[2]{#2}%
  \newcommand*\fsize{\dimexpr\f@size pt\relax}%
  \newcommand*\lineheight[1]{\fontsize{\fsize}{#1\fsize}\selectfont}%
  \ifx\svgwidth\undefined%
    \setlength{\unitlength}{673.64060193bp}%
    \ifx\svgscale\undefined%
      \relax%
    \else%
      \setlength{\unitlength}{\unitlength * \real{\svgscale}}%
    \fi%
  \else%
    \setlength{\unitlength}{\svgwidth}%
  \fi%
  \global\let\svgwidth\undefined%
  \global\let\svgscale\undefined%
  \makeatother%
  \begin{picture}(1,0.28344625)%
    \lineheight{1}%
    \setlength\tabcolsep{0pt}%
    \put(0,0){\includegraphics[width=\unitlength,page=1]{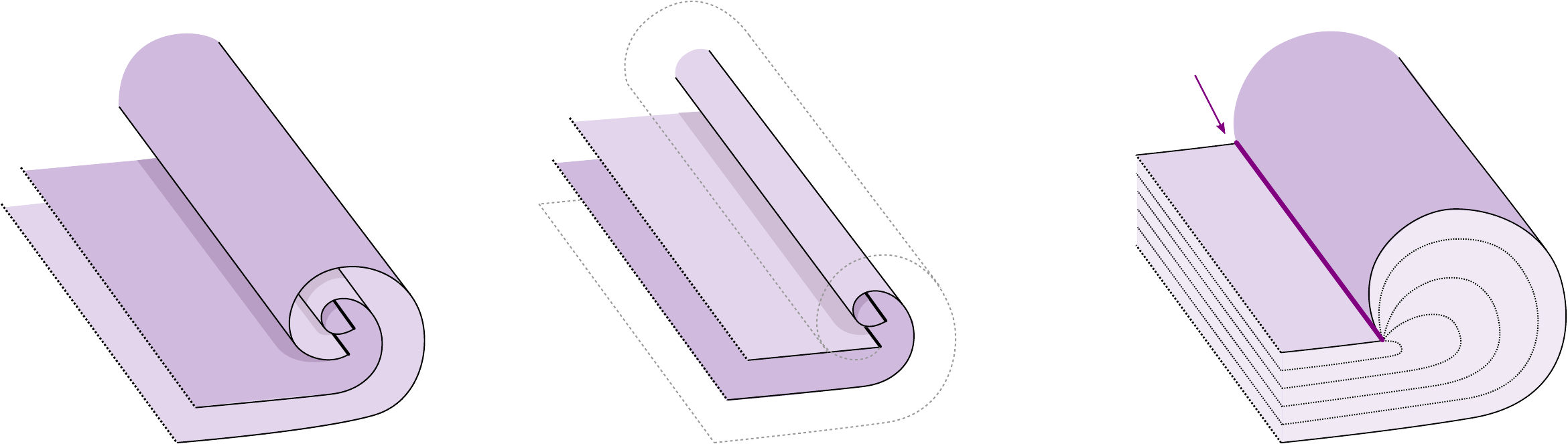}}%
    \put(0.74257074,0.25274113){\color[rgb]{0.50196078,0,0.50196078}\makebox(0,0)[lt]{\lineheight{1.25}\smash{\begin{tabular}[t]{l}$\gamma$\end{tabular}}}}%
    \put(0,0){\includegraphics[width=\unitlength,page=2]{roll_mod.pdf}}%
    \put(0.67966991,0.19902275){\color[rgb]{0.50196078,0,0.50196078}\makebox(0,0)[lt]{\lineheight{1.25}\smash{\begin{tabular}[t]{l}$F_+$\end{tabular}}}}%
    \put(0.94405629,0.21601061){\color[rgb]{0.50196078,0,0.50196078}\makebox(0,0)[lt]{\lineheight{1.25}\smash{\begin{tabular}[t]{l}$F_-$\end{tabular}}}}%
    \put(0,0){\includegraphics[width=\unitlength,page=3]{roll_mod.pdf}}%
  \end{picture}%
\endgroup%

\caption{Left: The surface $S'$, obtained from a satellite surface $S$ by twisting along the satellite torus. Middle: the intermediary surface $\hat{S}$ from the proof of Proposition~\ref{prop:twistisotopy}. Right: the surfaces $F_+$ and $F_-$ cobounding a product in $S^3$.
}\label{fig:unroll}
\end{figure}

As an intermediate step, we consider a surface $\hat{S}$ as depicted in the middle of Figure~\ref{fig:unroll}, obtained from $S'$ by removing a twist from an outermost sheet; note that this has the effect of changing which sheet is on top. Up to isotopy rel.\ boundary, we may assume that $\hat{S}$ and $S'$ coincide everywhere except for the interiors of two subsurfaces $F_- \subset S'$ and $F_+\subset \hat{S}$ as illustrated on the right side of Figure~\ref{fig:unroll}. Moreover, these subsurfaces $F_\pm$ are each isotopic to the original subsurface $F$, and their common boundary is a curve $\gamma$ that is isotopic to $\partial F$.

There is a simple isotopy from $F_-$ to $F_+$ rel.\ $\gamma$ in $S^3$. By pushing the interiors of the intermediate surfaces into $B^4$, we obtain an isotopy from $S'$ to $\hat{S}$ rel.\ boundary in $B^4$. By repeating this argument (such that the total number of iterations is $|S\cap T|$), we construct an isotopy from $S'$ to $S$.
 \end{proof}
 
 \subsubsection{Whitehead doubles.} \label{subsec:whitehead}

 Let $S$ be a Seifert surface for a knot $J$. The {\emph{positive Whitehead double}} $\wh(S)$ of $S$ is a Seifert surface for the positive Whitehead double $\wh(J)$ of $J$ obtained by joining two parallel copies of $S$ (of opposite orientations) via a positively twisted band. Figure~\ref{fig:whitehead_double} depicts  the positive Whitehead double of the surface $\Sigma_1$.

A key motivation for this section is the following theorem of Conway and Powell.

\begin{theorem}[{\cite[Theorem~1.9]{conway-powell}}]\label{thm:conwaypowell}
     Any pushed-in Seifert surfaces of the same genus for a knot with Alexander polynomial 1 are topologically  isotopic rel.~boundary in $B^4$.
\end{theorem}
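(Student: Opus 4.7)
The plan is to show that the complements $X_i := B^4 \setminus \nu(\Sigma_i)$ of the two pushed-in Seifert surfaces are homeomorphic rel.\ boundary, and then invoke uniqueness of topological normal bundles for locally flat surfaces to upgrade this to an isotopy of the surfaces themselves. Since $\Sigma_i$ has genus $g$ and boundary $K$, the 3-manifold $\partial X_i$ depends only on the abstract homeomorphism type of $\Sigma_i$ together with $K$, so the boundaries of $X_0$ and $X_1$ are canonically identified.

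The first step is to compute the relevant invariants of $X_i$. A Mayer--Vietoris computation in $B^4 = X_i \cup \nu(\Sigma_i)$ yields $H_1(X_i)\cong\mathbb{Z}$ (generated by a meridian of $\Sigma_i$) and $H_2(X_i)\cong\mathbb{Z}^{2g}$. More significantly, the homology of the infinite cyclic cover $\widetilde{X}_i$ as a $\mathbb{Z}[t^{\pm 1}]$-module is controlled by the Alexander module of $K$, which is trivial under the hypothesis $\Delta_K = 1$. Consequently $X_0$ and $X_1$ share the same equivariant algebraic invariants, together with matched linking/intersection data on the boundary.

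Next I would reduce $\pi_1(X_i)$ to $\mathbb{Z}$ using Freedman's topological disk embedding theorem. A priori $\pi_1(X_i)$ is only known to surject onto $\mathbb{Z}$, but the vanishing of the Alexander module means that any extra generators lift to null-homologous loops in $\widetilde{X}_i$ and so admit algebraic Whitney disks. Because $\mathbb{Z}$ is a good group for topological surgery, Freedman's theorem promotes these to locally flat embedded disks that kill the extra generators. Once $\pi_1(X_i)\cong\mathbb{Z}$ for both $i$, the matched equivariant homology and boundary data feed into the topological surgery exact sequence (or equivalently, the topological $s$-cobordism theorem in dimension five) to produce a homeomorphism $\varphi\colon X_0 \to X_1$ that is the identity on $\partial X_0 = \partial X_1$.

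Finally, the Freedman--Quinn uniqueness of topological normal bundles for locally flat embedded surfaces extends $\varphi$ to a homeomorphism of pairs $(B^4,\Sigma_0)\cong(B^4,\Sigma_1)$ restricting to the identity on $S^3$; by the (topological) Alexander trick on $B^4$, this homeomorphism is isotopic to the identity rel.\ $S^3$, yielding the desired topological isotopy rel.\ boundary between $\Sigma_0$ and $\Sigma_1$. The principal obstacle is the $\pi_1$-reduction step: standard homological surgery inputs alone do not suffice in dimension four, and this is exactly the point at which $\Delta_K = 1$ must be combined with Freedman's disk embedding theorem to do the essential work.
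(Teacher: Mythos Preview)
The paper does not prove this theorem at all; it is quoted verbatim from Conway--Powell and used as a black box. So there is no ``paper's own proof'' to compare against, and your sketch should be measured against the actual argument in \cite{conway-powell}.

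Your outline is in the right spirit but misplaces the main difficulty. For a \emph{pushed-in} Seifert surface $F$, the complement already has $\pi_1(B^4\setminus\nu F)\cong\mathbb{Z}$, with no hypothesis on $\Delta_K$ required. One sees this from the handle decomposition: arranging $F$ to have one minimum and $2g$ saddles with respect to the radial function, the exterior is built from $B^4$ by attaching one $1$--handle (from the minimum) and $2g$ $2$--handles (from the saddles), so $\pi_1$ is a quotient of $\mathbb{Z}$, and since $H_1\cong\mathbb{Z}$ it must equal $\mathbb{Z}$. Your proposed step of ``reducing $\pi_1$ to $\mathbb{Z}$ via Freedman's disk embedding'' is therefore unnecessary, and in any case circular as stated: the disk embedding theorem requires the ambient fundamental group to already be good, so it cannot be used to arrange that.

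Correspondingly, you have misidentified the role of $\Delta_K=1$. It is not used to kill $\pi_1$; rather, it is what forces the $\mathbb{Z}[t^{\pm1}]$--equivariant intersection forms on $H_2(\widetilde{X}_i)$ to be isometric (indeed stably hyperbolic) for the two exteriors. This is the genuine algebraic input, and your sketch only gestures at it with the phrase ``same equivariant algebraic invariants.'' Once that isometry is in hand and $\pi_1\cong\mathbb{Z}$ is known, Conway--Powell feed this into a Kreck-style modified surgery argument over $\mathbb{Z}[t^{\pm1}]$ to produce the rel--boundary homeomorphism of exteriors; the final upgrade to an ambient isotopy is as you describe.
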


Returning to the main examples $\Sigma_0$ and $\Sigma_1$ from Figure~\ref{fig:46band}, Theorem~\ref{thm:conwaypowell} implies that $\wh(\Sigma_0)$ and $\wh(\Sigma_1)$ are topologically isotopic rel.\ boundary, even though $\Sigma_0$ and $\Sigma_1$ are not topologically isotopic (as we show in Theorem~\ref{thm:doublecover}). In Section~\ref{section:khovanov}, we use Khovanov homology to obstruct $\wh(\Sigma_0)$ and $\wh(\Sigma_1)$ from being smoothly isotopic.

\begin{figure}
	\centering\includegraphics[width=.85\linewidth]{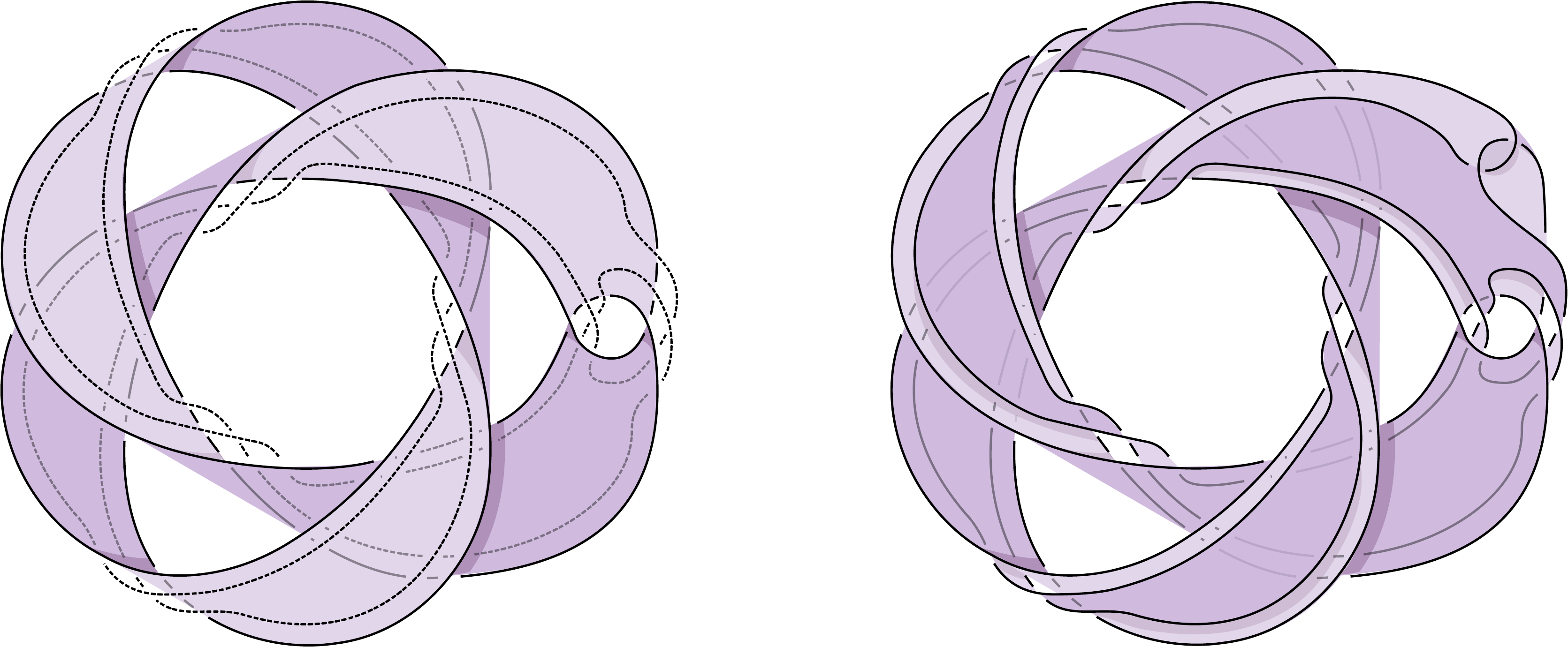}
	\caption{Left: the Seifert pushoff of the knot $K$ and the surface $\Sigma_1$. Right: the positive Whitehead doubled surface  $\wh(\Sigma_1)$.}
	\label{fig:whitehead_double}
\end{figure}

\begin{remark}
    As a technical point, we note that an isotopy between two Seifert surfaces $S_0$ and $S_1$ for a knot $J$ induces an isotopy between the  doubles $\wh(S_0)$ and $\wh(S_1)$. However, the doubled surfaces need not be isotopic rel.\ boundary, even if  $S_0$ and $S_1$ \textsl{are} isotopic rel.\ boundary. Fortunately, for our purposes, Proposition \ref{prop:twistisotopy} ensures that positive Whitehead doubles of surfaces are indeed well-defined up to isotopy rel.\ boundary in $B^4$. Indeed, in order to obtain a positive Whitehead double of a surface $S$, we first have to choose an annulus $A$ between $\partial S$ and the boundary of a tubular neighborhood $N$ of $\partial S$. We isotope $S$ rel.\ boundary to agree with $A$ in $N$ and then double $S$ and attach a band to obtain $\wh(S)$. If we use a different annulus $A'$ that twists one more time about the meridian of $N$, then the resulting surface will differ from the original by a twist along the satellite torus $\partial N$.  (Or in other words, $\wh(S)$ is defined unambiguously outside of a tubular neighborhood of its boundary, up to isotopy fixing the Whitehead satellite torus $T$ setwise.)
\end{remark}

\subsubsection{Extending smooth symmetries} The goal of this subsection is to establish the following proposition, which will play a key role in showing that $\wh(\Sigma_0)$ and $\wh(\Sigma_1)$ are not equivalent under any smooth ambient isotopy (and not merely isotopies that fix the boundary). In the following proposition, recall that $K$ and $\Sigma_1$ are specifically as in Figure~\ref{fig:46band}.

\begin{proposition}\label{prop:extend}
Any diffeomorphism of $S^3$ that fixes $K$ $($resp.~$\wh(K))$ setwise extends to a diffeomorphism of $B^4$ that fixes $\Sigma_1$ $($resp.\  $\wh(\Sigma_1))$ setwise.
\end{proposition}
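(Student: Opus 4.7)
My plan is to find an explicit involution $\tau$ of $S^3$ that already preserves $K$ and $\Sigma_1$ setwise in $S^3$, extend it to $B^4$, and then absorb every other symmetry of $(S^3,K)$ into a collar via an isotopy.

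First I would locate $\tau$ directly from Figure~\ref{fig:union}. The standard torus carries a $\pi$-rotation whose axis of symmetry can be chosen to pass through the attaching region of the band; for the right choice of axis this rotation simultaneously preserves the torus, each of the two annuli cut out by the $(4,-6)$ torus link, and the band itself, and therefore preserves $K$ and $\Sigma_1$ (and also $\Sigma_0$) setwise. Because the Whitehead doubling construction can be performed equivariantly with respect to $\tau$ (doubling along a $\tau$-invariant framing), the same rotation induces an involution of $S^3$ that preserves $\wh(K)$ and $\wh(\Sigma_1)$ setwise. Since $\tau$ is a rotation of $S^3$, it extends to a genuine rotation $\tilde\tau$ of $B^4$; pushing $\Sigma_1$ (resp.\ $\wh(\Sigma_1)$) into $B^4$ along a $\tau$-equivariant inward-pointing vector field in a collar of $\partial B^4$ ensures that $\tilde\tau$ preserves the pushed-in surface.

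Next I would classify the mapping class groups $\mathrm{MCG}(S^3,K)$ and $\mathrm{MCG}(S^3,\wh(K))$. The knot $K$ appears to be hyperbolic, which one can verify with SnapPy or by ruling out Seifert-fibered and satellite structures directly; Mostow rigidity then forces $\mathrm{MCG}(S^3,K)$ to be finite and realized by isometries of the complement, and a computer check shows that this group is exactly $\langle\tau\rangle\cong\mathbb{Z}/2$. For $\wh(K)$, the Whitehead satellite torus $T$ is the unique JSJ torus, so every symmetry must preserve $T$ setwise and thus restricts to a symmetry of the pattern solid torus and of the $K$-complement; combining the involution of the Whitehead pattern with the classification for $K$ pins down $\mathrm{MCG}(S^3,\wh(K))$ as the group generated by the satellite image of $\tau$.

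Given an arbitrary $\phi\in\mathrm{Diff}(S^3,K)$, the classification supplies an isotopy $\{\phi_s\}_{s\in[0,1]}$ in $\mathrm{Diff}(S^3,K)$ from $\phi_0=\phi$ to some $\phi_1=\tau^k$. Using a collar $S^3\times[0,1]\hookrightarrow B^4$ of $\partial B^4$, I would define the extension on the collar by $(x,s)\mapsto(\phi_s(x),s)$ and glue it to $\tilde\tau^k$ on the remaining interior of $B^4$. Since each $\phi_s$ preserves $K$ setwise, the collar portion of $\Sigma_1$, which is a product $K\times[0,1]$ with respect to the pushing-in framing, is preserved; in the interior, $\tilde\tau^k$ preserves $\Sigma_1$ by construction. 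The same argument applies verbatim to $\wh(K)$ and $\wh(\Sigma_1)$.

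The main obstacle I anticipate is the symmetry-group calculation in the second paragraph: one has to rule out exotic symmetries that could potentially carry $\Sigma_1$ to $\Sigma_0$, for any such symmetry would directly obstruct the proposition. Pinning $\mathrm{MCG}(S^3,K)$ down to exactly $\langle\tau\rangle$ is both essential and the most delicate step, and I would expect to do it by machine (SnapPy) rather than by hand, then bootstrap to $\wh(K)$ via the JSJ argument sketched above.
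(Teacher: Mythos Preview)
Your argument has a factual error at its core: $K$ is \emph{not} hyperbolic. As the paper notes (and as one can read off from Figure~\ref{fig:46band}), $K$ is the $-6$-framed positive Whitehead double of the left-handed trefoil, hence a satellite knot. So the Mostow rigidity plus SnapPy step collapses, and with it your claim that $\mathrm{MCG}(S^3,K)=\langle\tau\rangle\cong\mathbb{Z}/2$. In fact the symmetry group is larger: the JSJ pieces of the $\wh(K)$-complement are a trefoil complement and two Whitehead-link complements, and the latter each have automorphism group $\mathbb{Z}/2\oplus\mathbb{Z}/2$ (fixing boundary components setwise). Your assertion that the Whitehead satellite torus is the \emph{unique} JSJ torus for $\wh(K)$ is likewise wrong; there are two JSJ tori.

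There is also a missing ingredient that your outline does not account for even in spirit: Dehn twists about the JSJ tori. The mapping class of $(S^3,\wh(K))$ is determined by automorphisms of the JSJ pieces only \emph{up to} these twists, and they are not absorbed by the collar argument. The paper handles twists about the outermost torus by extending over the solid torus neighborhood of $\wh(K)$, but twists about the Whitehead satellite torus genuinely move $\wh(\Sigma_1)$ in $S^3$; here one must invoke Proposition~\ref{prop:twistisotopy} to see that such twists preserve $\wh(\Sigma_1)$ up to isotopy rel.\ boundary in $B^4$. This step is essential and has no analogue in your proposal.
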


Given a knot $J \subset S^3$, let $\diff(S^3,J)$ denote the group of diffeomorphisms of $S^3$ that fix $J$ setwise. The symmetry group of $J$, denoted $\sym(J)$, is the quotient of the group $\diff(S^3,J)$  by $\diff_0(S^3,J)$, the normal subgroup of diffeomorphisms that are isotopic to the identity through diffeomorphisms of the pair $(S^3,J)$.  The  following lemma reduces Proposition~\ref{prop:extend} to an analysis of the discrete, finitely generated group $\sym(J)$ instead of the infinite-dimensional group $\diff(S^3,J)$.

\begin{lemma}
Let $S \subset B^4$ be a smooth, properly embedded surface bounded by a knot $J \subset S^3$. If every element of $\sym(J)$ has a representative in $\diff(S^3,J)$ that extends to a diffeomorphism of $(B^4,S)$, then every diffeomorphism of $(S^3,J)$ extends to a diffeomorphism of $(B^4,S)$.
\end{lemma}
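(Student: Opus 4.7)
The plan is to reduce the problem to extending an isotopy rather than a single diffeomorphism, and then to perform the extension in a collar of $\partial B^4$. Given any $\psi \in \diff(S^3, J)$, the hypothesis furnishes some $\phi \in \diff(S^3, J)$ with $[\phi] = [\psi] \in \sym(J)$ that extends to a diffeomorphism $\Phi$ of $(B^4, S)$. Setting $h := \psi \circ \phi^{-1}$, we have $h \in \diff_0(S^3, J)$ by definition, so it suffices to produce some $H \in \diff(B^4, S)$ that restricts to $h$ on $\partial B^4$: the composition $H \circ \Phi$ will then be the desired diffeomorphism of $(B^4, S)$ extending $\psi$.

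To build $H$, I would first fix a smooth isotopy $\{h_s\}_{s \in [0,1]}$ of $(S^3, J)$ with $h_0 = \id$ and $h_1 = h$. Using the relative tubular neighborhood theorem, pick a collar embedding $c \colon S^3 \times [0, 1] \hookrightarrow B^4$ with $c(x, 0) = x$ for all $x \in S^3$ and $c^{-1}(S) = J \times [0, 1]$; such a product structure exists because $S$ is smooth and properly embedded with $\partial S = J$, and $S$ meets $\partial B^4$ transversely. Choose a smooth cutoff $\rho \colon [0, 1] \to [0, 1]$ with $\rho(0) = 1$ and $\rho$ identically zero on $[1/2, 1]$, and define $H \colon B^4 \to B^4$ by
\[ H(c(x, t)) = c(h_{\rho(t)}(x), t) \]
on the collar, and $H = \id$ elsewhere. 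Since each $h_s$ preserves $J$ setwise, $H$ maps $c(J \times [0, 1]) = S \cap c(S^3 \times [0, 1])$ to itself, and $H$ is the identity on the complementary region of $B^4$, so $H$ preserves $S$ setwise. Smoothness of $H$ across the inner boundary of the collar follows from $\rho$ vanishing identically on a neighborhood of $1/2$, and by construction $H|_{\partial B^4} = h_{\rho(0)} = h_1 = h$, as desired.

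The main technical point will be securing the collar in which $S$ has product structure $J \times [0, 1]$; once that is in place, the rest is a standard ``spread the isotopy inward'' maneuver. The underlying conceptual content is that the hypothesis handles the ``discrete'' obstruction captured by the mapping class group $\sym(J)$, while the collar construction handles the ``continuous'' part coming from $\diff_0(S^3, J)$, so no separate effort is needed for the latter beyond the isotopy-extension argument above.
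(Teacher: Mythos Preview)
Your proof is correct and follows essentially the same approach as the paper: reduce to extending elements of $\diff_0(S^3,J)$ by composing with the given extension of a representative in the same mapping class, then extend such an element by tapering the isotopy to the identity across a collar of $\partial B^4$ in which $S$ has product structure $J \times [0,1]$. The only cosmetic difference is that the paper phrases the collar-straightening via an ambient isotopy $\psi_t$ and conjugates by it, whereas you directly invoke a product collar and use a cutoff function; these are equivalent packaging of the same construction.
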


\begin{proof}
Let $f$ be any diffeomorphism of the pair  $(S^3,J)$. By hypothesis, there exists $g \in \diff(S^3,J)$ representing the same element of $\sym(J)$ such that $g$ extends to a diffeomorphism of $(B^4,S)$. To show that $f$ extends to a diffeomorphism of $(B^4,S)$, it then suffices to show that  $g^{-1} \circ f$ extends, for we may then compose  with the extension of $g$ to obtain an extension of $f$. 

The equivalence of $f$ and $g$ in $\sym(J)$ implies that these maps are isotopic through diffeomorphisms of  $(S^3,J)$. This implies that $g^{-1}\circ f$ is isotopic to the identity through diffeomorphisms of $(S^3,J)$. Thus, it suffices to prove that every element of $\diff_0(S^3,J)$ extends to a diffeomorphism of $(B^4,S)$.

For notational convenience, view $B^4 \setminus \{pt\}$ as $S^3 \times (-\infty,1]$ and  choose an isotopy $\psi_t:B^4 \to B^4$ (rel.\ $\partial B^4$) that straightens a collar neighborhood of $S$, i.e.\ where $\psi_0=\id$ and $\psi_1(S) \cap  ( S^3 \! \times \! (0,1])= J \times (0,1]$. Given an element $h\in \diff_0(S^3,J)$, choose an isotopy $h_t$ with $h_0=\id$ and $h_1=h$. Now define $H_t:B^4\to B^4$ by $H_t(x,r)=(h_{rt}(x),r)$ for all $(x,r) \in S^3 \times (0,1]$ and by the identity on the rest of $B^4$. Then $\psi_t^{-1} \circ H_t \circ \psi_t$ is an isotopy from the identity at $t=0$ to a new   diffeomorphism  at $t=1$ that extends $h$ to all of $B^4$ and maps $S$ to itself, as desired.
\end{proof}

\begin{proof}[Proof of Proposition~\ref{prop:extend}] 
We will write the proof just for $\wh(\Sigma_1)$; the proof for $\Sigma_1$ is implicit and strictly easier. The knot $K$ is itself the $-6$-framed positive Whitehead double of the left-handed trefoil. Thus, the complement of $\wh(K)$ admits a JSJ decomposition consisting of the following pieces:
\begin{enumerate}
    \item $M_1$, a left-handed trefoil complement,
    \item $M_2$, the complement of the $-6$-framed positive Whitehead link,
    \item $M_3$, the complement of the standard Whitehead link.
\end{enumerate}

Any self-homeomorphism $f$ of $(S^3, \wh(K))$ must fix this decomposition up to isotopy. The map $f$ is determined up to isotopy by an automorphism of each $M_i$ that fixes the ordering of its boundary components, up to twists about the tori boundary of each $M_i$.

Observe that $M_2$ and $M_3$ are homeomorphic as 3-manifolds (although not by a homeomorphism preserving meridians of the associated links). Both  these manifolds  have automorphism group $D_4$ (see \cite[Table~2]{henry-weeks}), but this includes a map that switches the two boundary components. Quotienting by this map, the automorphism groups of $M_2$ and $M_3$ fixing each boundary component setwise are $\mathbb{Z}_2\oplus\mathbb{Z}_2$.

\begin{figure}
\labellist
\pinlabel $\Sigma_1\cap M_2$ at 62 -10
\pinlabel $\Sigma_1\cap M_3$ at 189 -10
\endlabellist
    \includegraphics[width=.6\linewidth]{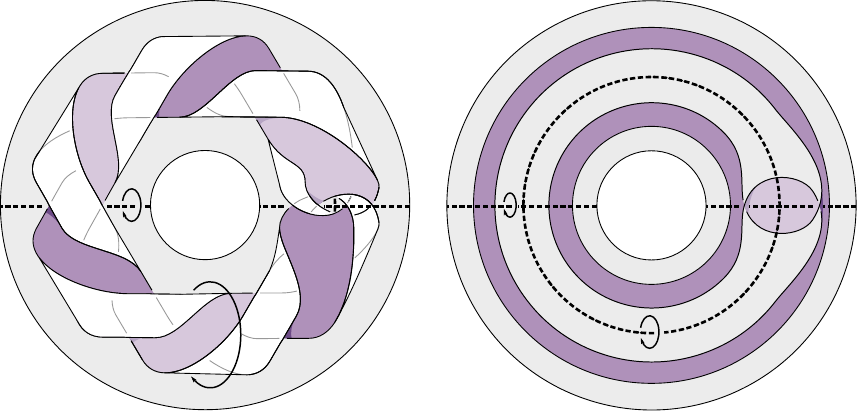}
    \vspace{.15in}
    
    \caption{Intersections of $\wh(\Sigma_1)$ with the JSJ pieces $M_2$ and $M_3$. The manifold $M_2$ is the complement of the $-6$-framed positive Whitehead link; $\Sigma_1$ intersects $M_2$ in two components that twist about the ``inner" boundary. The manifold $M_3$ is the complement of the usual Whitehead link; here one boundary is filled 
    to indicate $\wh(K)$.}
    \label{fig:symmetrygroup}
\end{figure}

We temporarily project $\wh(\Sigma_1)$ back into $S^3=\partial B^4$ and consider its intersections with $M_1, M_2$, and $M_3$. Note that $\wh(\Sigma_1)$ does not intersect $M_1$.  We illustrate the intersections of $\wh(\Sigma_1)$ with $M_2$ and $M_3$ in Figure~\ref{fig:symmetrygroup}. The automorphism group of $M_2$ fixing boundary components is generated in Figure~\ref{fig:symmetrygroup} (left) by 180$^{\circ}$ rotation about a horizontal axis and 180$^{\circ}$ rotation about a circular axis in the plane of the drawing. Both of these maps fix $\wh(\Sigma_1)\cap M_2$ setwise. The part of the automorphism group of $M_3$ preserving both boundary components is similarly generated in Figure~\ref{fig:symmetrygroup} (right) by rotation about a horizontal axis and rotation about a circular axis in the plane of the drawing. Again, both of these maps fix $\wh(\Sigma_1)\cap M_3$ setwise.

So far, we have seen that automorphisms of $M_2$ and $M_3$ are isotopic to ones that fix the portion of $\wh(\Sigma_1)$ inside $M_2$ or $M_3$ setwise (even in $S^3$). Thus, we need only consider twists about the JSJ tori. Moreover, since $\wh(\Sigma_1)$ does not intersect the torus separating $M_1$ and $M_2$, we need only consider the tori that cobound $M_3$. We now push the interior of $\wh(\Sigma_1)$ slightly back into $B^4$. One of these tori is the boundary of a tubular neighborhood of $\wh(K)$; twists about this torus can be extended over this solid torus taking $\wh(K)$ to be a fixed core. The other torus is the Whitehead satellite torus. By Propostion~\ref{prop:twistisotopy}, twists about this torus preserves  $\wh(\Sigma_1)$ up to isotopy rel.\ boundary in $B^4$.
\end{proof}

\section{Obstructions from Khovanov homology} \label{section:khovanov}

\subsection{Khovanov preliminaries}\label{sec:preliminaries}

Khovanov homology is known to be functorial for link cobordisms in $\R^3 \times [0,1]$, in the sense that a smooth, oriented, properly embedded cobordism $S \subset \R^3 \times [0,1]$ between links $L_0$ and $L_1$ induces a diagramatically-defined $\Z_2$-linear map on Khovanov homology $\kh(S): \kh(L_0) \to \kh(L_1)$ that is invariant under smooth isotopy of $S$ rel.\ boundary \cite{khovanov00, jacobsson, barnatan, khovanov06}. We make note of an important extension of this invariant from \cite[Lemma 4.7]{morrison-walker-wedrich} and \cite[Proposition 3.7]{lipshitz-sarkar}, where it is proven that a (possibly non-orientable) link cobordism in $S^3 \times [0,1]$ induces a map on Khovanov homology that is invariant under diffeomorphism of $S^3 \times [0,1]$ that restricts to the identity on the boundary.

The Khovanov functor can be adapted to the setting of surfaces in $B^4$. To that end, suppose $S_0, S_1 \subset B^4$ are surfaces bounded by the same link $L \subset S^3$. Any boundary-preserving diffeomorphism of $B^4$ carrying $S_0$ to $S_1$ can be taken to fix a small open ball in the complement of the surfaces (via an isotopy supported far from the surfaces). This induces a diffeomorphism of link cobordisms in $S^3 \times [0,1]$. Thus, to distinguish the  surfaces $S_0$ and $S_1$ in $B^4$, it suffices to distinguish the link cobordisms $L \to \emptyset$ they induce in $S^3 \times [0,1]$, or equivalently, to distinguish the maps $\kh(L) \to \Z_2$ they induce on Khovanov homology.

Previous work has proven the efficacy of this technique by showing that the maps on Khovanov homology  distinguish many families of surfaces in $B^4$ up to diffeomorphism rel.\ boundary, including exotic examples \cite{hayden-sundberg, lipshitz-sarkar, sundberg-swann}. Moreover, the approach in \cite{hayden-sundberg} demonstrates the computability of such maps: by carefully choosing a cycle $\psi \in \ckh(L)$ from the Khovanov chain complex of $L$, we can control the complexity of calculating the induced chain maps $\ckh(S_0)(\psi)$ and $\ckh(S_1)(\psi)$.  Moreover, the calculations in this paper only require a subset of the Morse and Reidemeister induced chain maps (c.f.\  \cite[Tables 1-2]{hayden-sundberg}), tailored to $x$-labeled smoothings with $\Z_2$ coefficients.

\subsection{Main example computation} \label{subsection:khovanov_main}

To motivate the main obstruction from Khovanov homology in Theorem~\ref{thm:exotic}, we begin by distinguishing the maps on Khovanov homology induced by $\Sigma_0$ and $\Sigma_1$. In combination with Proposition~\ref{prop:extend} (c.f.\@~Corollary~\ref{cor:ambient}), this proves Theorem~\ref{thm:ambient} in the smooth setting. Moreover, the computations of $\ckh(\Sigma_0)$ and $\ckh(\Sigma_1)$ set the groundwork from which we distinguish the maps induced by $\wh(\Sigma_0)$ and $\wh(\Sigma_1)$.

\begin{proposition}\label{prop:khovanov}
    The Seifert surfaces $\Sigma_0$ and $\Sigma_1$ induce distinct maps on Khovanov homology, distinguished by a given cycle $\phi \in \ckh(K)$, and hence are not related by any diffeomorphism of $B^4$ that restricts to the identity on $\partial B^4$ $($and in particular are not smoothly isotopic rel.\ boundary in $B^4)$.
    \end{proposition}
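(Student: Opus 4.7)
The strategy is to invoke the extension of Khovanov functoriality to surfaces in $B^4$ described in \S\ref{sec:preliminaries}: a boundary-preserving diffeomorphism of $B^4$ carrying $\Sigma_0$ to $\Sigma_1$ produces a self-diffeomorphism of $S^3\times[0,1]$ identifying the associated link cobordisms $K\to\emptyset$, so it suffices to exhibit a cycle $\phi\in\ckh(K)$ on which the induced chain maps $\ckh(\Sigma_0)(\phi)$ and $\ckh(\Sigma_1)(\phi)$ disagree in $\ckh(\emptyset)=\Z_2$. Functoriality then upgrades this to the homological statement about $\kh$.

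First, I would fix a convenient diagram of $K$ together with an oriented resolution whose circles give rise to the distinguished cycle $\phi$ obtained by labeling every circle by $x$ (the $\khbot$/Plamenevskaya-type cycle used in the framework of \cite{hayden-sundberg}). This cycle is particularly amenable to tracking because it is supported on a single smoothing and propagates through Morse and Reidemeister moves via a very constrained arithmetic over $\Z_2$: the Frobenius multiplication and comultiplication act by $m(x\otimes x)=0$, $m(x\otimes 1)=x$, and $\Delta(x)=x\otimes x$, while a death on an $x$-labeled circle contributes $1$ and a death on a $1$-labeled circle contributes $0$. Reidemeister moves act via the explicit $x$-restricted chain maps tabulated in \cite[Tables~1--2]{hayden-sundberg}, most of whose contributions vanish on pure $x$-smoothings.

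Next, I would present each $\Sigma_i$ upside down as an explicit movie from $K$ to $\emptyset$. Both surfaces decompose as two annuli glued along a single band, so each movie begins with a saddle that undoes the band, after which the resulting link is the boundary of the two relevant annuli. From Figure~\ref{fig:union} these annuli sit on opposite sides of the closed standard genus-$2$ surface; hence the two saddles meet the circles of the Seifert smoothing in visibly different configurations, which is the diagrammatic input driving the computation. After the band, a short sequence of Reidemeister moves and further saddles reduces the intermediate link to unknots that are killed by caps.

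The computation then reduces to tracking $\phi$ through each of the two movies and checking that the two saddles place $x$-labels in genuinely different combinatorial positions: in one movie the saddle fuses two $x$-labeled circles (giving $0$) or lands in a configuration whose subsequent caps all produce a $1$, while in the other movie the saddle and caps collaborate to produce the opposite value. The main obstacle is the choice of initial diagram: it must be chosen so that (i) the Seifert smoothing is transparent, (ii) both bands can be drawn compatibly with this smoothing so that each saddle's action on the $x$-labels is directly readable, and (iii) the downstream Reidemeister and Morse moves in both movies can be routed through smoothings on which the formulas from \cite[Tables~1--2]{hayden-sundberg} restrict to essentially a single nonzero term. Once such a diagram is in hand, the distinction $\ckh(\Sigma_0)(\phi)\neq\ckh(\Sigma_1)(\phi)$ is a bookkeeping exercise, and the conclusion about (non)diffeomorphism rel.\ boundary is immediate.
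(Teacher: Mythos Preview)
Your proposal follows the paper's approach closely: exhibit an all-$x$-labeled cycle $\phi$, observe that a band of $\Sigma_0$ merges two distinct $x$-circles (forcing $\ckh(\Sigma_0)(\phi)=0$), and verify by a direct movie computation that $\ckh(\Sigma_1)(\phi)=1$.

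The one point worth flagging is your choice of smoothing. You propose the oriented (Seifert) resolution, i.e., the genuine Plamenevskaya cycle, but the paper's $\phi$ is \emph{not} supported there. As explained in Remark~\ref{remark:localization}, the smoothing is dictated by a tangle decomposition of $\Sigma_1$: crossings arising from ``band twists'' receive the oriented smoothing, while those arising from ``band crossings'' receive the \emph{non}-oriented one. It is this mixed smoothing that simultaneously makes a band of $\Sigma_0$ land on a merge of two distinct $x$-circles and keeps the entire $\Sigma_1$ movie inside all-$x$ states. Whether the pure Seifert-resolution Plamenevskaya cycle would also distinguish the two maps is not addressed in the paper and would need its own verification; your acknowledgment that ``the main obstacle is the choice of initial diagram'' is exactly right, but the resolution of that obstacle is a tailored smoothing rather than the canonical oriented one.
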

\begin{proof}
    A smoothing of $K$ is given on the top-right of Figure~\ref{fig:phi_calculation}, and a straightforward calculation shows that $x$-labeling each component produces a cycle $\phi \in \ckh(K)$ in the Khovanov chain complex ($0$-smoothed crossings are indicated by a gray band on $\phi$ and connect distinct $x$-labeled components). We immediately note that the map on the Khovanov chain complex induced by $\Sigma_0$ satisfies $\ckh(\Sigma_0)(\phi) = 0$, as one can find many band moves describing $\Sigma_0$ whose induced map merges distinct $x$-labeled components of $\phi$. A complementary calculation for the chain map induced by $\Sigma_1$ is given in Figure~\ref{fig:phi_calculation} (completed after capping off the remaining unknots), and shows  $\ckh(\Sigma_1)(\phi) = 1$. We conclude that $\phi$ distinguishes the maps induced by these surfaces, implying that there is no diffeomorphism of $B^4$ restricting to the identity on $\partial B^4$ that sends $\Sigma_0$ to $\Sigma_1$.
\end{proof}

\begin{figure}[p]
	\centering
	\includegraphics[width=.875\linewidth]{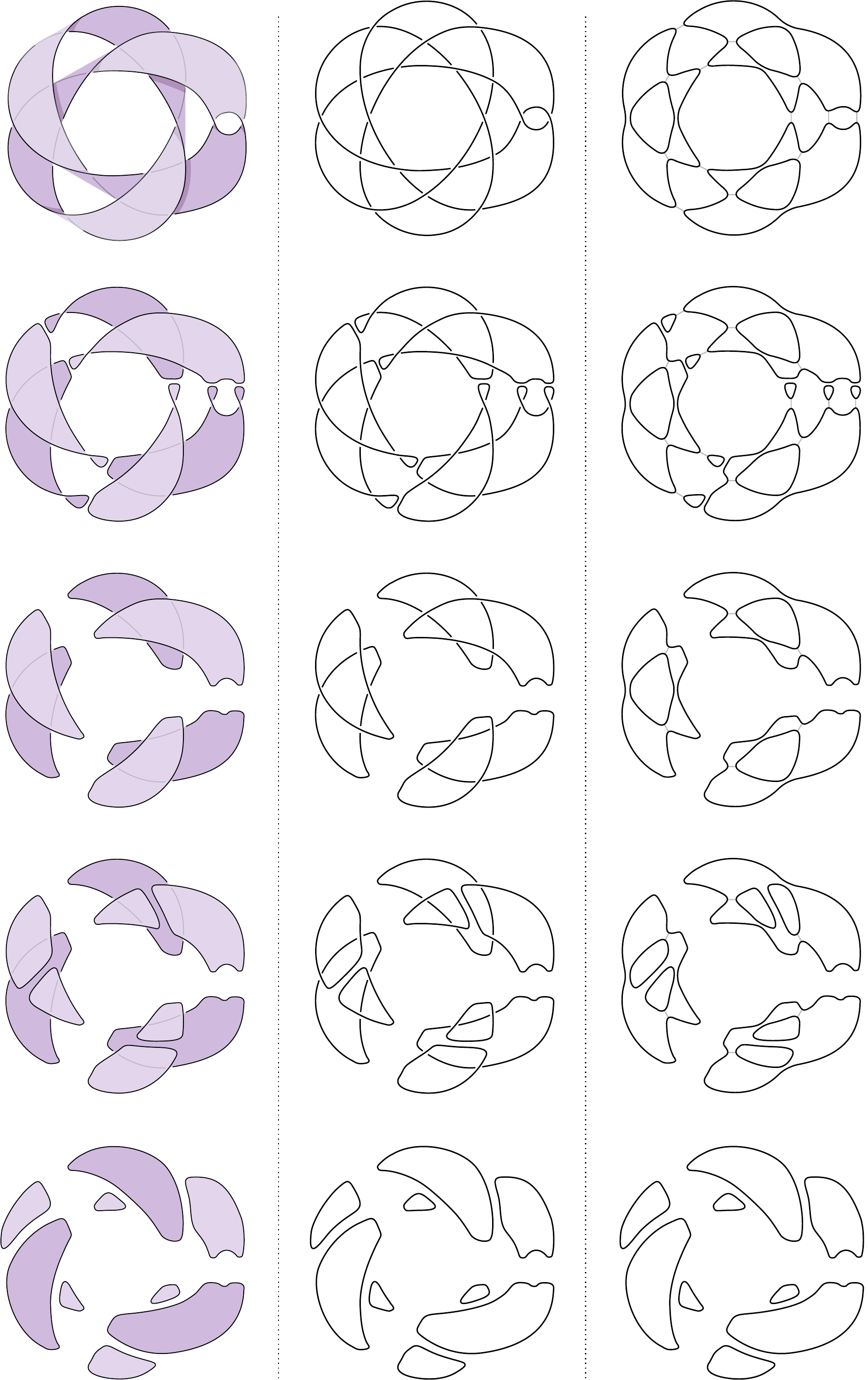}
 \captionsetup{width=.85\linewidth}
	\caption{Left: the surface $\Sigma_1$ as it is dipped into $B^4$. Middle: the corresponding movie. Right: the effect of each move on $\phi$, with all smoothings $x$-labeled.}
	\label{fig:phi_calculation}
\end{figure}

\begin{remark} \label{remark:localization}
    The movie of $\Sigma_1$ that we chose in Figure~\ref{fig:phi_calculation} localizes the induced chain map on Khovanov homology. In particular, $\Sigma_1$ can be decomposed into a collection of \textit{band twists} \raisebox{-.2\height}{\includegraphics[scale=.5]{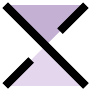}} and \textit{band crossings} \raisebox{-.2\height}{\includegraphics[scale=.5]{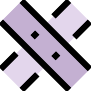}}, with which we compute $\ckh(\Sigma_1)$ as a collection of isolated chain maps on the corresponding boundary tangles (c.f.\ \cite{khovanov02,barnatan}). The chosen tangle decomposition of $K$ sheds light on how we chose a cycle: within each tangle, $\phi$ has a consistent labeled smoothing, where band twists have oriented smoothings, band crossings have nonoriented smoothings, and only $x$ labels are used throughout. In the next section, we show that this localization is, in some sense, stable under the process of Whitehead doubling from Section~\ref{subsec:satellite}.
\end{remark}

\subsection{Whitehead doubling and Khovanov homology} \label{subsection:khovanov_whitehead}

We now distinguish the maps on Khovanov homology induced by $\wh(\Sigma_0)$ and $\wh(\Sigma_1)$. Motivated by the previous section: we decompose $\wh(K)$ into a collection of tangles which reflect the local behavior of $\wh(\Sigma_1)$, we choose a cycle $\Phi \in \ckh(\wh(K))$ by choosing labeled smoothings for each tangle, and finally, we calculate the induced maps on $\Phi$ as a collection of tangle maps.

\begin{theorem}\label{thm:khovanov}
The surfaces $\wh(\Sigma_0)$ and $\wh(\Sigma_1)$ induce distinct maps on Khovanov homology, distinguished by a given cycle $\Phi \in \ckh(\wh(K))$, and hence, are not related by a diffeomorphism of $B^4$ restricting to the identity on $\partial B^4$ $($and in particular are not smoothly isotopic rel.\ boundary in $B^4)$.
\end{theorem}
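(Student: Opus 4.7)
The plan is to mirror Proposition~\ref{prop:khovanov} after lifting its tangle decomposition through the Whitehead doubling construction. By Remark~\ref{remark:localization}, $\Sigma_1$ was expressed as a composition of band twists and band crossings, and $\phi$ was chosen so that within each of the corresponding tangles of $K$ it carried a consistent all-$x$-labeled (oriented or nonoriented) smoothing. Whitehead doubling naturally replaces each such tangle by its $2$-cable and inserts a single new clasp tangle corresponding to the Whitehead band; this yields a tangle decomposition of $\wh(K)$ that is compatible with a decomposition of $\wh(\Sigma_1)$ into doubled band moves plus one clasp band. The analogous decomposition for $\wh(\Sigma_0)$ uses the same cabled-tangle structure, since Whitehead doubling is performed in a neighborhood of the boundary and does not interact with the interior combinatorics of either $\Sigma_i$.

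The first step is to construct $\Phi \in \ckh(\wh(K))$ locally. In each cabled band tangle I would take the $2$-cable of the labeled smoothing used for $\phi$ (accounting for the orientation reversal of the second copy in the positive Whitehead doubling) and $x$-label every resulting circle, while in the clasp tangle I would select the unique resolution that is compatible across the tangle boundaries with the $x$-labels just prescribed. Verifying that $\Phi$ is a cycle reduces to local checks in each tangle, since the Khovanov differential preserves tangle decompositions: in the cabled tangles, the check follows from the corresponding vanishing for $\phi$, because the differential of a $2$-cabled $x$-labeled state decomposes into cabled copies of the differential of the original state; in the clasp tangle only the two clasp crossings need to be analyzed directly. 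The second step is to compute $\ckh(\wh(\Sigma_0))(\Phi) = 0$, which follows the pattern of $\ckh(\Sigma_0)(\phi) = 0$: $\wh(\Sigma_0)$ contains bands that merge distinct $x$-labeled components of $\Phi$ (inherited and doubled from the merging bands in $\Sigma_0$), so the image must vanish. The third and most involved step is to compute $\ckh(\wh(\Sigma_1))(\Phi) = 1$ by applying the local chain map associated to each doubled band twist, doubled band crossing, and the Whitehead clasp band to the corresponding local piece of $\Phi$, then capping off the remaining unknots to land in $\Z_2$.

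I expect the main obstacle to be in the clasp tangle: the labeled resolution there must simultaneously extend the surrounding $x$-labels to a cycle, be annihilated by the merging band(s) of $\wh(\Sigma_0)$, and be carried to $1$ by the Whitehead band of $\wh(\Sigma_1)$. Once a compatible choice is identified, the remaining computation should be mechanical but voluminous, since the cabled band moves involve twice as many strands and considerably more smoothings than their uncabled counterparts; organizing it via the local Morse and Reidemeister maps from \cite[Tables 1-2]{hayden-sundberg} on $x$-labeled smoothings will be essential to avoid combinatorial explosion. Conceptually, the feasibility of the whole argument rests on the stability of the tangle-local presentation under Whitehead doubling foreshadowed in Remark~\ref{remark:localization}: the choices of $\phi$ and of the movie of $\Sigma_1$ were already sufficiently local that their doubles admit a parallel local treatment, reducing the $\wh$-computation to the $\Sigma_i$-computation plus a single clasp contribution.
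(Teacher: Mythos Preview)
Your proposal is essentially the same approach as the paper's proof: decompose $\wh(\Sigma_1)$ into the doubled versions of the band twists and band crossings (which the paper calls \emph{big twists} and \emph{big crossings}) together with a single \emph{Whitehead clasp}, assign an all-$x$-labeled smoothing to each tangle to build $\Phi$, and then compute both induced maps locally.

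One small caution: your justification that $\Phi$ is a cycle because ``the differential of a $2$-cabled $x$-labeled state decomposes into cabled copies of the differential of the original state'' is not literally correct. The cabled tangles contain new crossings between the two parallel strands, and the smoothing of $\Phi$ at those crossings is genuinely new data not determined by $\phi$; the cycle check there does not formally reduce to the one for $\phi$. In practice this is harmless --- the paper simply writes down explicit labeled smoothings for the big twist, big crossing, and clasp tangles and verifies directly that every $0$-resolved crossing joins distinct $x$-labeled circles --- but you should plan to do that direct check rather than appeal to a cabling heuristic. Similarly, for $\ckh(\wh(\Sigma_0))(\Phi)=0$ the paper locates the killing band in the region of a big crossing (where $\wh(\Sigma_0)$ and $\wh(\Sigma_1)$ differ), exactly as you anticipate.
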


Theorem~\ref{thm:khovanov} implies Theorem~\ref{thm:exotic} in the finite case and up to smooth equivalence rel.\ boundary (see the discussion in Section~\ref{subsec:whitehead} and, in particular, Theorem~\ref{thm:conwaypowell}).

\begin{figure}[p]
    {\def\svgwidth{.88\linewidth}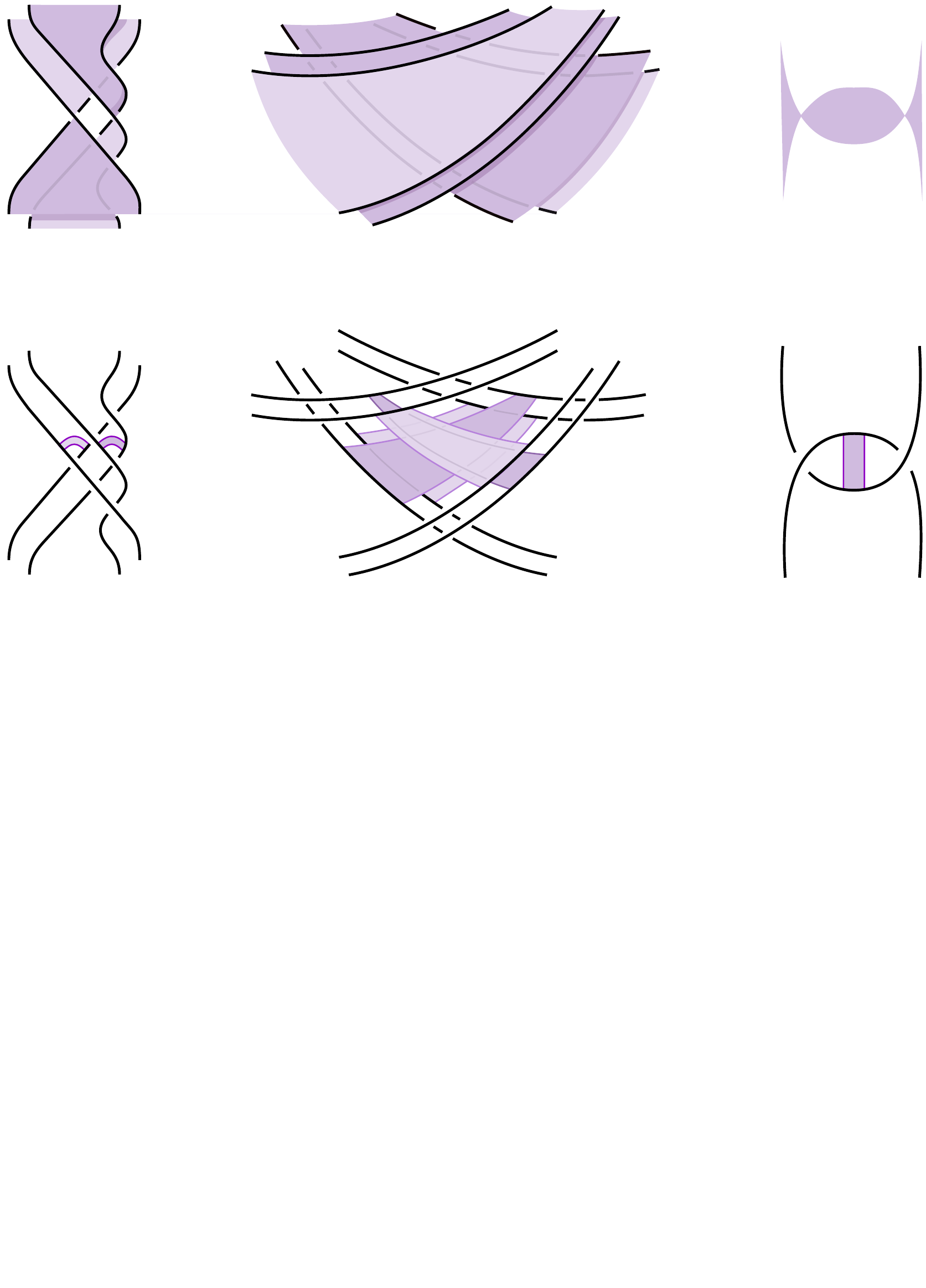}
    
    \vspace{.05in}
    
    \caption{By row, top: local pictures of $\wh(\Sigma_1)$; boundary tangles with bands reflecting the surface they bound; a chosen labeled smoothing for each tangle; the result of applying the maps induced by the band moves (and a sequence of Reidemeister moves) to each labeled smoothing.}
    \label{fig:bigs}
\end{figure}

\begin{proof}[Proof of Theorem~\ref{thm:khovanov}]
    To begin, observe that $\wh(\Sigma_1)$ exhibits three types of local behavior, illustrated in the first row of Figure~\ref{fig:bigs}: \textit{big twists}, \textit{big crossings}, and a single \textit{Whitehead clasp}. The boundary of each is illustrated in the second row as a tangle in $\wh(K)$, decorated with band moves to reflect the local behavior of the surface $\wh(\Sigma_1)$. In the third row, we choose a labeled smoothing for each tangle. We extend to a labeled smoothing $\Phi \in \ckh(\wh(K))$ by noting that the strands connecting any two tangles are consistently $x$-labeled.
    
    We may dip $\wh(\Sigma_1)$ into $S^3 \times [0,1]$ to produce a movie consisting of two stages: first, the band moves within each tangle, followed by a sequence of Reidemeister moves that simplify each tangle; second, a sequence of Morse deaths that cap off the resulting crossingless unlink. The first stage is localized within each tangle in Figure~\ref{fig:bigs}, so the map it induces is localized to labeled smoothings of that tangle. When applied to the relevant labeled smoothings for a big twist, big crossing, and Whitehead clasp, we obtain the final row of Figure~\ref{fig:bigs}. Overall, the first stage maps $\Phi$ to the all $x$-labeled smoothing for the crossingless unlink. The final stage sends this labeled smoothing to $1$, so collectively, $\ckh(\wh(\Sigma_1))(\Phi) = 1$.

    Now consider $\wh(\Sigma_0)$ and, in particular, the local behavior near a big crossing for $\wh(\Sigma_1)$, as illustrated in Figure~\ref{fig:bad_band}. The highlighted band move can be realized as the composition of a Reidemeister II move and a saddle. The induced map  on the labeled smoothing for a big crossing leads to merge maps between distinct $x$-labeled components, from which it follows that $\ckh(\wh(\Sigma_0))(\Phi) = 0$.
\end{proof}
    
\begin{figure}
    \includegraphics[width=.75\linewidth]{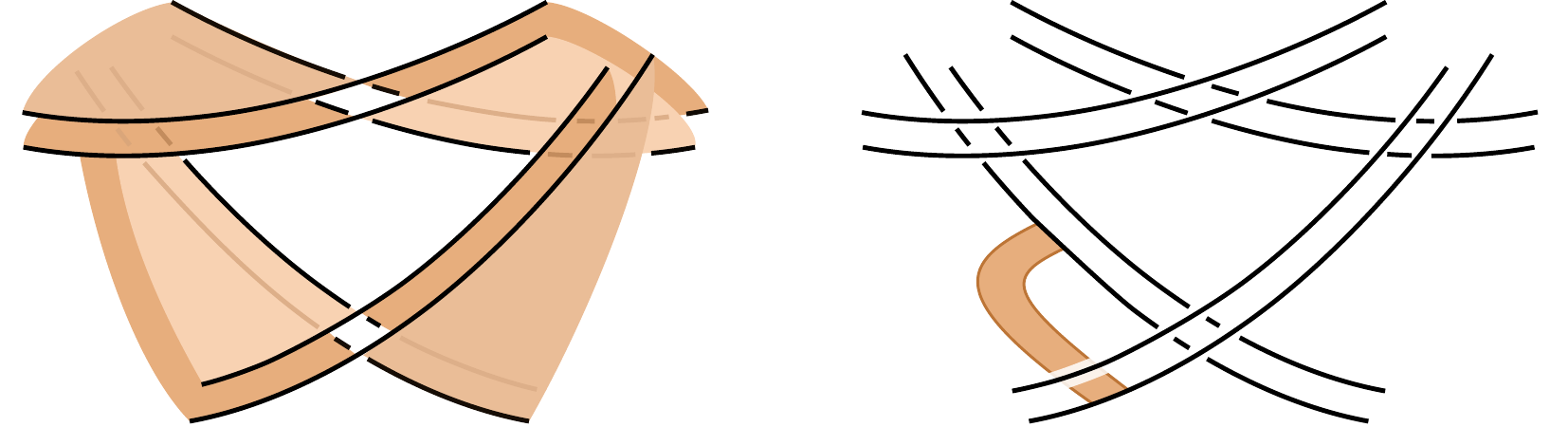}
    \caption{A local picture of $\wh(\Sigma_0)$ near a big crossing for $\wh(\Sigma_1)$, together with a band used to calculate $\ckh(\wh(\Sigma_0))$.}
    \label{fig:bad_band}
\end{figure}

 \begin{remark}\label{remark:phi_prime}
    Note that the process for choosing $\Phi$ depended entirely on the decomposition of $\wh(K)$ into tangles that reflect the overall surface $\wh(\Sigma_1)$. An entirely identical argument can be used to produce a cycle $\Phi'$ that reflects $\wh(\Sigma_0)$, reversing the roles of the induced maps in the calculations. In particular, we note both maps induced by $\wh(\Sigma_0)$ and $\wh(\Sigma_1)$ are nontrivial.
\end{remark}

\begin{remark}
    Although the surfaces $\wh(\Sigma_0)$ and $\wh(\Sigma_1)$ are not minimal genus in $B^4$, they are not destabilizable. This is because any surface that is a connected sum with a standard $T^2$ will induce the trivial map on Khovanov homology with $\Z_2$ coefficients (or more generally for $\Z$ coefficients, will have image inside $2\Z \subset \Z$), whereas the maps induced by these surfaces are both nontrivial (c.f.\ ~\ref{thm:khovanov} and~\ref{remark:phi_prime}).
\end{remark}

\begin{remark}\label{rem:nonorientgenus}
    Non-orientable and higher-genus examples can be obtained by slight modifications to our base example. Replace the clasp on $K$ (from Whitehead doubling a trefoil) with an $-n + \frac1{-m}$ rational tangle, illustrated in Figure~\ref{fig:tangle}, and produce surfaces analogous to $\Sigma_0$ and $\Sigma_1$. Note that $m=n=1$ corresponds to $K$. Increasing $n$ increases the genus of the surface; increasing $m$ gives an infinite family for each genus (both orientable and non-orientable). As these modifications consist entirely of left-handed crossings, the above arguments persist, so Theorems~\ref{thm:ambient} and~\ref{thm:exotic} hold for non-orientable, higher-genus surfaces in the smooth rel.\ boundary setting.
\end{remark}

\begin{figure}
    \centering
    {\def\svgwidth{.5\linewidth}
\begingroup%
  \makeatletter%
  \providecommand\color[2][]{%
    \errmessage{(Inkscape) Color is used for the text in Inkscape, but the package 'color.sty' is not loaded}%
    \renewcommand\color[2][]{}%
  }%
  \providecommand\transparent[1]{%
    \errmessage{(Inkscape) Transparency is used (non-zero) for the text in Inkscape, but the package 'transparent.sty' is not loaded}%
    \renewcommand\transparent[1]{}%
  }%
  \providecommand\rotatebox[2]{#2}%
  \newcommand*\fsize{\dimexpr\f@size pt\relax}%
  \newcommand*\lineheight[1]{\fontsize{\fsize}{#1\fsize}\selectfont}%
  \ifx\svgwidth\undefined%
    \setlength{\unitlength}{400.82446001bp}%
    \ifx\svgscale\undefined%
      \relax%
    \else%
      \setlength{\unitlength}{\unitlength * \real{\svgscale}}%
    \fi%
  \else%
    \setlength{\unitlength}{\svgwidth}%
  \fi%
  \global\let\svgwidth\undefined%
  \global\let\svgscale\undefined%
  \makeatother%
  \begin{picture}(1,0.38527797)%
    \lineheight{1}%
    \setlength\tabcolsep{0pt}%
    \put(0.63211648,0.17854414){\color[rgb]{0,0,0}\makebox(0,0)[lt]{\lineheight{1.25}\smash{\begin{tabular}[t]{l}{{$m$}}\end{tabular}}}}%
    \put(0.87162282,0.242163){\color[rgb]{0,0,0}\makebox(0,0)[lt]{\lineheight{1.25}\smash{\begin{tabular}[t]{l}{{$n$}}\end{tabular}}}}%
    \put(0,0){\includegraphics[width=\unitlength,page=1]{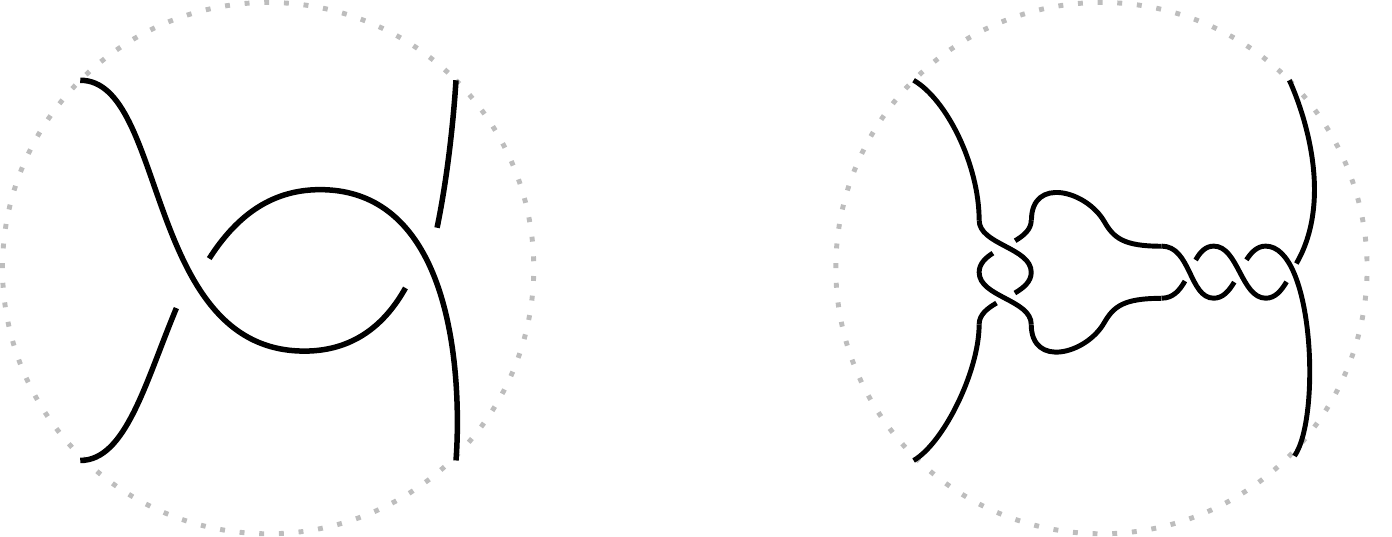}}%
  \end{picture}%
\endgroup%
}
    \caption{Replacing the clasp from $K$ (left) with an $-n + \frac1{-m}$  rational tangle (right) produces an infinite family of knots bounding distinct Seifert surfaces for each positive (non-)orientable genus. (Note that $m, n$ denote numbers of half-twists rather than full twists.)}
    \label{fig:tangle}
\end{figure}

\begin{remark}
    The techniques we used to prove Theorem~\ref{thm:khovanov} appear robust enough to generalize to calculations on larger families of Seifert surfaces. In particular, we expect that whenever a pair of Seifert surfaces for a strongly quasipositive knot $J$ induce distinct maps on Khovanov homology, then so will their positive Whitehead doubles. Moreover, the distinction of their induced maps will be captured by a readily available cycle from $\ckh(\wh(J))$,  produced in a similar manner to Theorem~\ref{thm:khovanov}.
\end{remark}

\begin{corollary}\label{cor:ambient}
The surfaces $\wh(\Sigma_{0})$ and $\wh(\Sigma_1)$ are not related by any diffeomorphism of $B^4$  $($and in particular are not ambiently smoothly isotopic$)$.
\end{corollary}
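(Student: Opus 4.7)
The plan is to combine Theorem~\ref{thm:khovanov} with Proposition~\ref{prop:extend} in a standard straightening argument. Suppose toward contradiction that there exists a diffeomorphism $\Phi \colon B^4 \to B^4$ carrying $\wh(\Sigma_0)$ to $\wh(\Sigma_1)$. Its restriction $\phi := \Phi|_{\partial B^4}$ is a diffeomorphism of $S^3$ that maps $\wh(K) = \partial \wh(\Sigma_0)$ to $\wh(K) = \partial \wh(\Sigma_1)$, so $\phi$ fixes $\wh(K)$ setwise.

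By Proposition~\ref{prop:extend}, $\phi$ extends to a diffeomorphism $\Psi \colon B^4 \to B^4$ that preserves $\wh(\Sigma_1)$ setwise. Consider $\Psi^{-1} \circ \Phi$. Since $\Psi|_{\partial B^4} = \phi = \Phi|_{\partial B^4}$, this composition restricts to the identity on $\partial B^4$. On the other hand,
\[
(\Psi^{-1} \circ \Phi)(\wh(\Sigma_0)) = \Psi^{-1}(\wh(\Sigma_1)) = \wh(\Sigma_1),
\]
where the last equality uses that $\Psi$ preserves $\wh(\Sigma_1)$ setwise. Thus $\Psi^{-1} \circ \Phi$ is a diffeomorphism of $B^4$ restricting to the identity on $\partial B^4$ that carries $\wh(\Sigma_0)$ to $\wh(\Sigma_1)$, contradicting Theorem~\ref{thm:khovanov}. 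The ``in particular'' clause then follows because any ambient smooth isotopy induces such a diffeomorphism.

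There is essentially no obstacle here: all the work has been done in Theorem~\ref{thm:khovanov} (the Khovanov calculation distinguishing the two surfaces rel.\ boundary) and in Proposition~\ref{prop:extend} (the symmetry-extension result based on the JSJ analysis). The corollary is really just the formal consequence obtained by using the extension result to reduce the ambient problem to the rel.\ boundary problem.
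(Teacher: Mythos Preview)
Your proof is correct and is essentially identical to the paper's own argument: the paper also assumes a diffeomorphism $f$ taking $\wh(\Sigma_0)$ to $\wh(\Sigma_1)$, uses Proposition~\ref{prop:extend} to extend the boundary restriction (of $f^{-1}$) to a diffeomorphism $g$ preserving $\wh(\Sigma_1)$, and then notes that $g\circ f$ restricts to the identity on $S^3$ while carrying $\wh(\Sigma_0)$ to $\wh(\Sigma_1)$, contradicting Theorem~\ref{thm:khovanov}. The only cosmetic difference is whether one extends $\phi$ and inverts the extension, or extends $\phi^{-1}$ directly.
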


\begin{proof}
Suppose there exists a diffeomorphism $f \colon (B^4,\wh(\Sigma_0)) \to (B^4,\wh(\Sigma_1))$. By Proposition~\ref{prop:extend}, the diffeomorphism  $f^{-1}{\big|_{S^{3}}}\colon (S^3,\wh(K)) \to (S^3,\wh(K))$ extends to a diffeomorphism $g\colon (B^4,\wh(\Sigma_1))\to (B^4,\wh(\Sigma_1))$. The composition $g\circ f:(B^4,\wh(\Sigma_0))\to (B^4,\wh(\Sigma_1))$ is the identity on $S^3$, contradicting Theorem~\ref{thm:khovanov}.
\end{proof}

\subsection{Whitehead doubles of strongly quasipositive knots}\label{subsection:sqp}

In \cite{rudolph:kauffman-bound}, Rudolph introduced the class of \emph{strongly quasipositive} knots. Though originally defined braid-theoretically, it is convenient (and equivalent) to define such knots as the boundaries of \emph{strongly quasipositive Seifert surfaces}, which are formed from a stack of parallel disks  that are joined by embedded, positively half-twisted bands as in Figure~\ref{fig:sqp}(a). (These disks and bands are oriented so that the surface's boundary is naturally braided.)

\begin{proposition}\label{prop:sqp}
If $S$ is a strongly quasipositive Seifert surface for a knot $J$, then $\wh(S)$ induces a nontrivial map from $\kh(\wh(J))$ to $\mathbb{Z}_2$.
\end{proposition}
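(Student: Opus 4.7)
The plan is to generalize the localized Khovanov computation from the proof of Theorem~\ref{thm:khovanov} by exploiting the canonical normal form of a strongly quasipositive Seifert surface, i.e.\ a stack of parallel disks joined by embedded, positively half-twisted bands as in Figure~\ref{fig:sqp}(a). The key structural observation is that the positive Whitehead double $\wh(S)$ inherits a local decomposition from this description: near each positive half-twist band of $S$, the doubled surface $\wh(S)$ looks exactly like a \emph{big twist} tangle as in the middle column of Figure~\ref{fig:bigs}, and near the Whitehead clasp it looks exactly like the \emph{Whitehead clasp} tangle from the rightmost column of Figure~\ref{fig:bigs}. Crucially, because the bands of $S$ are disjointly embedded in collar neighborhoods of the disks (and all are positive), no \emph{big crossing} tangles are needed for $\wh(S)$; this is precisely the local simplification that strong quasipositivity provides.

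The first step is therefore to express $\wh(J)$ as a union of (i) big twist tangles, one for each positively half-twisted band of $S$, and (ii) a single Whitehead clasp tangle, connected by vertical strands of a braid. The second step is to define a cycle $\Phi \in \ckh(\wh(J))$ by choosing the local labeled smoothing prescribed in the third row of Figure~\ref{fig:bigs} at each tangle, and then $x$-labeling every strand of $\wh(J)$ connecting one tangle to another. Since the strands entering and leaving every local tangle are $x$-labeled in the chosen prescriptions, these local labelings glue to a well-defined labeled resolution of $\wh(J)$, and a direct check (component by component) shows that $\Phi$ is a cycle.

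The third step is to compute $\ckh(\wh(S))(\Phi)$ by the same local-to-global strategy as in Theorem~\ref{thm:khovanov}. Dip $\wh(S)$ into $S^{3}\times[0,1]$ via a movie that performs the band moves inside each tangle first, followed by Reidemeister simplifications within each tangle, and finally Morse deaths capping off the resulting crossingless unknot. Each big twist sends its local labeled smoothing to the all-$x$-labeled oriented resolution of its simplified tangle (exactly as in Figure~\ref{fig:bigs}), and the Whitehead clasp sends its local smoothing to the all-$x$-labeled oriented resolution as well. Composing with the Reidemeister chain maps and Morse deaths yields $\ckh(\wh(S))(\Phi)=1 \in \mathbb{Z}_{2}$, and hence the map on Khovanov homology pairs nontrivially with $[\Phi]$.

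The main obstacle I anticipate is verifying that $\Phi$ is genuinely a cycle globally and that the local contributions compose to give $1$ rather than $0$. Both issues rest on the combinatorial claim that the \emph{locally} computed labeled smoothings in Figure~\ref{fig:bigs} extend coherently to a global cycle, which in turn relies on every band being positive (so every local piece is a big twist) and on the $x$-labels on the tangle boundaries matching across the braid strands. Once this consistency is established, the desired nontriviality follows by the same tangle-wise argument used in Theorem~\ref{thm:khovanov}, with the big-crossing contributions of that proof simply not appearing.
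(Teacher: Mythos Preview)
Your proposal is correct and follows the same strategy as the paper, which phrases the cycle as ``essentially the Whitehead double of Plamenevskaya's invariant $\psi(J)$'' and works with its own local models in Figure~\ref{fig:sqp}, explicitly referring back to Figure~\ref{fig:bigs} for the analogous calculation. One small caveat: a strongly quasipositive band may connect non-adjacent disks, so its double is not literally a single big-twist tangle joined by vertical strands---there are additional crossings where the doubled band passes over the intermediate doubled strands; the paper absorbs these into its local model in Figure~\ref{fig:sqp}(b,e), and in your framing they are handled by the same all-$x$ oriented resolution, so the cycle check and the computation go through unchanged. (Also, the big-twist tangle sits in the leftmost column of Figure~\ref{fig:bigs}, not the middle.)
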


\begin{proof}
Given $S$ formed from $n$ disks joined by $\ell$ bands as in Figure~\ref{fig:sqp}(a), we first form the normal pushoff of $S$ by introducing an additional $n$ disks and $\ell$ bands as in Figure~\ref{fig:sqp}(b). We give this pushoff the opposite orientation as that of $S$. To complete $\wh(S)$, we add one more band between the top two sheets as in Figure~\ref{fig:sqp}(c).

We consider an element in $\kh(\wh(J))$ that is essentially the Whitehead double of Plamenevskaya's invariant $\psi(J)$ (when $J$ is viewed as an $n$-stranded braid) \cite{plamenevskaya:transverse-Kh}. Recall that Plamenevskaya's chain element is given by the smoothing of the $n$-braid $J$ as $n$ concentric circles as in Figure~\ref{fig:sqp}(d), with all circles labeled with an $x$. Consider the similarly constructed element of $\ckh(\wh(J))$, where the regions from parts (b) and (c) of Figure~\ref{fig:sqp} are smoothed and labeled as in parts (e) and (f). All 0-resolved crossings join distinct $x$-labeled circles, so this chain element is a cycle. A calculation entirely analogous to the one given in Figure~\ref{fig:bigs} shows that $\kh(\wh(S))$ maps the described element of $\kh(\wh(J))$ to 1 in $\mathbb{Z}_2=\kh(\emptyset)$, as desired.
\end{proof}

\begin{figure}
       \labellist
\small\hair 2pt
\pinlabel (a) at 49 75
\pinlabel (b) at 180 75
\pinlabel (c) at 285 75
\pinlabel (d) at 49.25 -15
\pinlabel (e) at 180 -15
\pinlabel (f) at 285 -15

\pinlabel $x$ at 95.5 45
\pinlabel $x$ at 95.5 23
\pinlabel $x$ at 95.5 0.75

\pinlabel $x$ at 222.5 51
\pinlabel $x$ at 222.5 45
\pinlabel $x$ at 222.5 29
\pinlabel $x$ at 222.5 23
\pinlabel $x$ at 222.5 6.75
\pinlabel $x$ at 222.5 0.75

\pinlabel $x$ at 284.75 15.5
\pinlabel $x$ at 313 0.75
\pinlabel $x$ at 313 30.5
\endlabellist
\includegraphics[width=.925\linewidth]{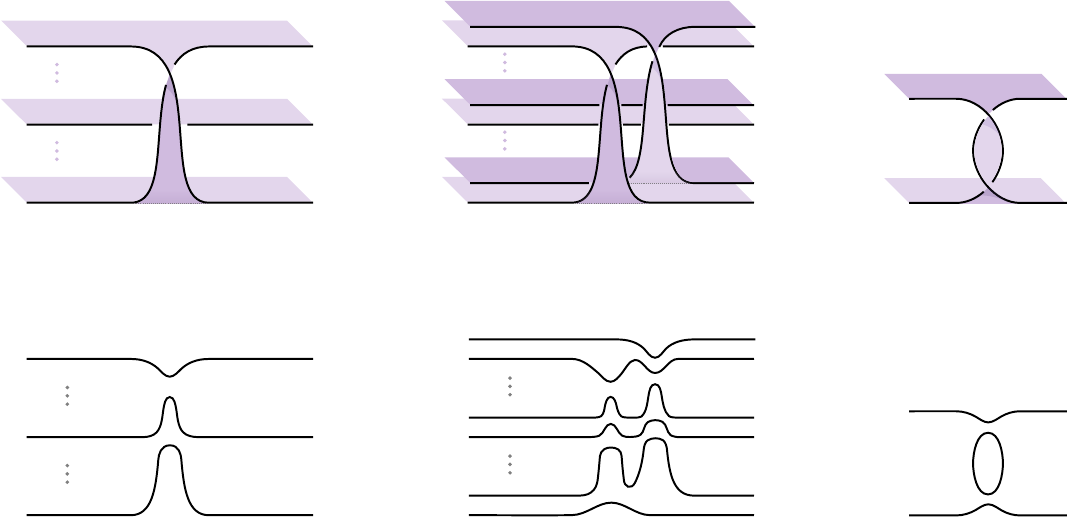}
\bigskip
\bigskip
\captionsetup{width=.85\linewidth}
\caption{(a) A half-twisted band in a strongly quasipositive Seifert surface. (b) Whitehead doubling near a half-twisted band. (c) A fully twisted band forming the clasp in the Whitehead double. (d-e) Labeled smoothings of these regions.}\label{fig:sqp}
\end{figure}

We may now prove Theorem~\ref{thm:sqp}, which we restate slightly more precisely.

\begin{theorem}
If $J$ is a nontrivial strongly quasipositive knot of Seifert genus $g$, then $\wh(J)$ bounds at least two Seifert surfaces of genus $2g$ that are topologically isotopic rel.~boundary in $B^4$ yet are not related by any diffeomorphism of $B^4$ (and in particular are not ambiently smoothly isotopic).
\end{theorem}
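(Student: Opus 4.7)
The plan is to exhibit two concrete genus-$2g$ Seifert surfaces for $\wh(J)$ and then separate them using the two tools already built in the excerpt: the Conway--Powell theorem for the topological half, and the Khovanov nontriviality of $\wh(S)$ (Proposition~\ref{prop:sqp}) for the smooth half. Let $S$ be a strongly quasipositive Seifert surface for $J$ with $g(S)=g$, and let $\Sigma_1$ denote the standard genus-$1$ Seifert surface for $\wh(J)$ coming from the clasped annulus picture. The two competing surfaces of genus $2g$ will be $\wh(S)$ (which doubles the genus as in \S\ref{subsec:whitehead}) and $\Sigma_{\mathrm{stab}}:=\Sigma_1 \# (2g-1)\,T^2$, formed by attaching $2g-1$ small trivial tubes pushed into the interior of $B^4$. (Since $J$ is nontrivial, $g\geq 1$, so at least one tube is attached.)

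For the topological claim, note that $\wh(J)$ has Alexander polynomial $1$, so Theorem~\ref{thm:conwaypowell} of Conway--Powell applies and forces $\wh(S)$ and $\Sigma_{\mathrm{stab}}$, being pushed-in Seifert surfaces of the same genus for $\wh(J)$, to be topologically isotopic rel.\ boundary in $B^4$. For the smooth distinction rel.\ boundary, Proposition~\ref{prop:sqp} shows that $\wh(S)$ induces a nonzero map $\kh(\wh(J))\to\Z_2$. By contrast, $\Sigma_{\mathrm{stab}}$ is a connected sum with a standard $T^2$, so as observed in the remark following Theorem~\ref{thm:khovanov}, the map it induces on Khovanov homology over $\Z_2$ is identically zero. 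The two surfaces therefore cannot be related by any diffeomorphism of $B^4$ that is the identity on $\partial B^4$.

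To upgrade from \emph{rel.\ boundary} to \emph{ambient}, I would follow the pattern of Corollary~\ref{cor:ambient}: if $f\colon(B^4,\wh(S))\to(B^4,\Sigma_{\mathrm{stab}})$ were a diffeomorphism, compose with an extension $g\colon(B^4,\Sigma_{\mathrm{stab}})\to(B^4,\Sigma_{\mathrm{stab}})$ of $f^{-1}|_{S^3}$ to obtain a rel.\ boundary diffeomorphism $g\circ f$, contradicting the Khovanov computation. So the key step is the analog of Proposition~\ref{prop:extend}: every self-diffeomorphism of $S^3$ fixing $\wh(J)$ setwise extends to a diffeomorphism of $(B^4,\Sigma_{\mathrm{stab}})$. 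Using the lemma just after Proposition~\ref{prop:extend}, this reduces to checking each element of the finitely generated group $\sym(\wh(J))$. The JSJ decomposition of $S^3\setminus\wh(J)$ always contains the Whitehead satellite piece as an outermost JSJ component, and any symmetry of $\wh(J)$ must preserve this decomposition. The stabilizing tubes in $\Sigma_{\mathrm{stab}}$ can be shrunk into a small ball in $B^4$ disjoint from the region one wants to act on, so it suffices to extend each symmetry across $\Sigma_1$; automorphisms of each JSJ piece may be realized on the corresponding local picture of $\Sigma_1$, and twists about the Whitehead satellite torus preserve $\Sigma_1$ up to rel.\ boundary isotopy in $B^4$ by Proposition~\ref{prop:twistisotopy}, exactly as in the proof of Proposition~\ref{prop:extend}.

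The main obstacle is this last extension step: unlike in Proposition~\ref{prop:extend}, where $J$ is a specific knot, here $J$ is arbitrary and the ``inner'' JSJ pieces of $S^3\setminus\wh(J)$ are not under explicit control. What saves us is that $\Sigma_1$ lives entirely inside the Whitehead satellite solid torus and is disjoint from the JSJ tori lying deeper in $S^3\setminus J$; consequently the only symmetries that can interact with $\Sigma_1$ are those arising from the Whitehead piece itself together with twists about the bounding tori, both of which are handled as above. I expect the bookkeeping for this argument, and in particular the verification that symmetries of the inner JSJ pieces can be realized by diffeomorphisms of $B^4$ fixing $\Sigma_{\mathrm{stab}}$ setwise (possibly after first isotoping the stabilizing tubes out of the way), to be the only genuinely nonroutine step; everything else reduces to the statements already proved in Sections~\ref{sec:constructions} and~\ref{section:khovanov}.
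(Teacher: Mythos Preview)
Your rel.\ boundary argument is identical to the paper's: compare $\wh(S)$ against the $(2g{-}1)$-fold stabilization of the standard genus-$1$ surface, invoke Conway--Powell for the topological side, and use Proposition~\ref{prop:sqp} together with the vanishing of the Khovanov map on a $T^2$-summand for the smooth side.

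Where you diverge is in the upgrade from rel.\ boundary to arbitrary diffeomorphisms of $B^4$. You attempt to repeat the symmetry-group strategy of Proposition~\ref{prop:extend} and Corollary~\ref{cor:ambient}, which forces you to analyze $\sym(\wh(J))$ for an \emph{arbitrary} strongly quasipositive $J$; as you yourself flag, the inner JSJ pieces are not under explicit control, and your sketch that ``symmetries of the inner pieces don't interact with $\Sigma_1$'' is plausible but not actually established. The paper bypasses this entirely with a one-line observation: being destabilizable (i.e., splitting off a standard $T^2$-summand) is a diffeomorphism invariant of the pair $(B^4,\Sigma)$. Any diffeomorphism carrying $\wh(S)$ to the stabilized surface would exhibit $\wh(S)$ itself as a connected sum with a standard $T^2$, forcing its Khovanov map to vanish over $\Z_2$ and contradicting Proposition~\ref{prop:sqp}. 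No analysis of $\sym(\wh(J))$ is needed.

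So your proof is not wrong in spirit, but the ambient step is both incomplete (for general $J$) and unnecessarily hard. Replace your final two paragraphs with the destabilization argument and the proof is complete.
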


\begin{proof}
Let $S$ be a strongly quasipositive Seifert surface for $J$, which has genus $g\geq 1$ because $J$ is not the unknot. Let $\wh(S)$ denote its Whitehead double, which has genus $g(\wh(S))=2g\geq 2$. By the proposition above, $\wh(S)$ induces a nontrivial map on Khovanov homology with $\mathbb{Z}_2$-coefficients.

On the other hand, consider the genus-$2g$ Seifert surface for $\wh(J)$ obtained from the standard genus-1 Seifert surface for $\wh(J)$ by stabilizing $2g-1$ times. Any stabilized surface induces a trivial map on Khovanov homology with $\mathbb{Z}_2$-coefficients. Since the Whitehead doubled surface $\wh(S)$ induces a nontrivial map, it cannot be destabilized, hence there can be no diffeomorphism of $B^4$ carrying $\wh(S)$ to the stabilized surface.

On the other hand, these genus-$2g$ Seifert surfaces are topologically isotopic rel.\ boundary in $B^4$ by Theorem~\ref{thm:conwaypowell}.
\end{proof}

\subsection{Minimal genus examples and discussion}\label{subsec:furtherexamples}

We now modify our previous examples to obtain the minimal genus examples claimed in  Theorem~\ref{thm:exotic}.   Note that the  (untwisted) Whitehead double of any knot bounds a standard genus-1 Seifert surface, so the  Seifert surfaces constructed in Section~\ref{subsec:satellite} will never minimize the Seifert genus of their boundary (see Proposition~\ref{prop:notmingenus}).

To this end, we consider a knot $K_T$ obtained as a band sum of $\wh(K)$ with the right-handed trefoil as shown in  Figure~\ref{fig:band_sum}. Note that we have not specified how many full twists are in the band, so $K_T$ actually refers to any member of an infinite family of knots. These knots can be distinguished by their Khovanov homologies \cite{josh} or by their hyperbolic volumes; see the ancillary files \cite{ancillary}.

Because the band has been chosen to avoid both $\wh(\Sigma_0)$ and $\wh(\Sigma_1)$, we can sum these surfaces with the standard Seifert surface for the trefoil $T$ to produce two genus-3 Seifert surfaces $\Sigma_0^T$ and $\Sigma_1^T$ for $K_{T}$. On the other hand, the band intersects the standard genus-1 Seifert surface for $\wh(K)$ in an essential way, leading to an increase in the Seifert genus.  Moreover, this eliminates all symmetries of the knot, which allows us to avoid an analysis like the one given in Proposition~\ref{prop:extend}.

\begin{figure}[b]
    \centering
    \includegraphics[width=.6\linewidth]{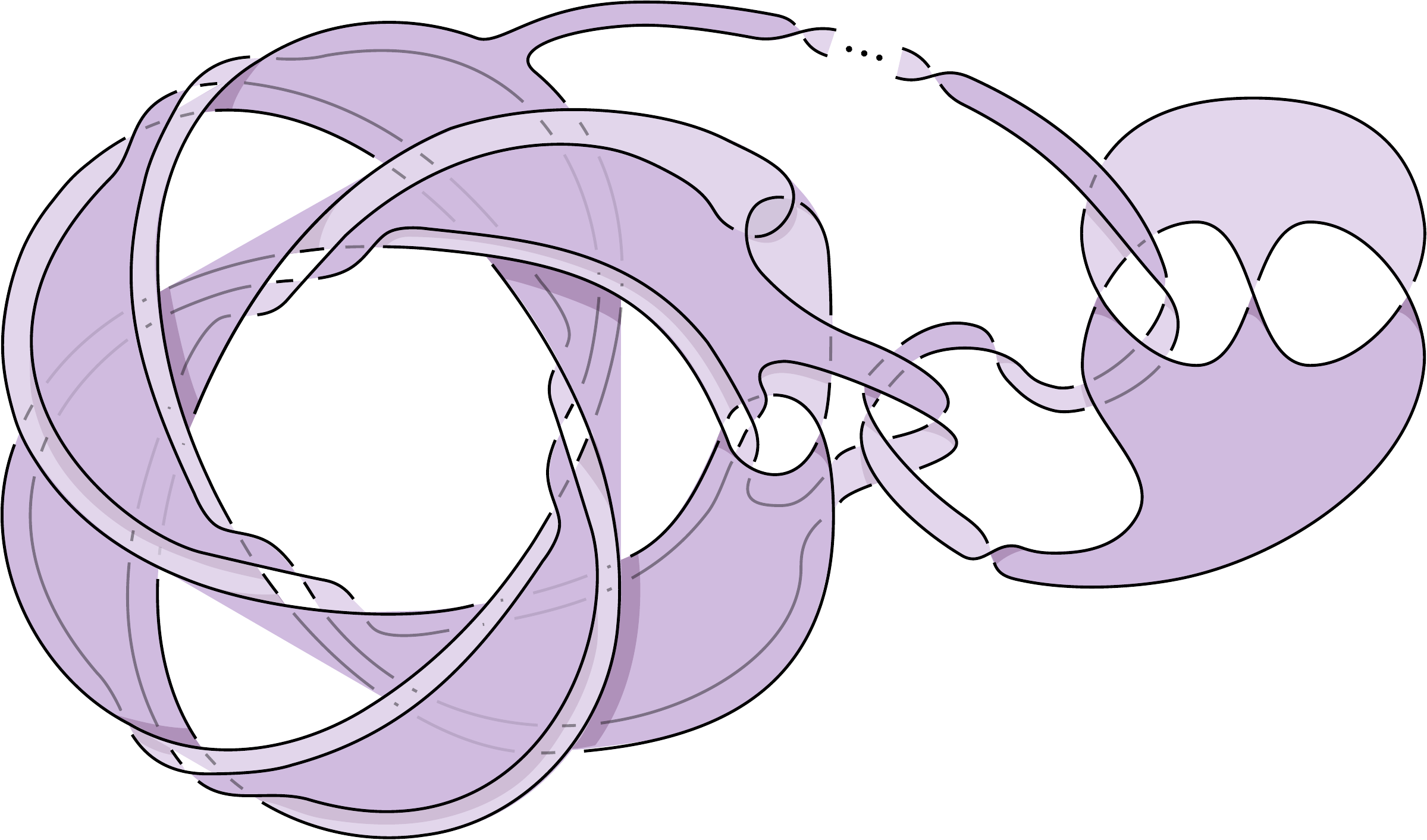}    \caption{The Seifert surface $\Sigma_1^T$ obtained by band summing $\wh(\Sigma_1)$ and a fiber for the trefoil (where the ellipsis indicates any number of full twists). The boundary  of the surface is the knot $K_T$.} 
    
    \label{fig:band_sum}
\end{figure}

\begin{proposition}\label{prop:ktgenus}
The knot $K_{T}$ has Seifert genus $3$ and trivial symmetry group.
\end{proposition}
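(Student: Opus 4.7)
The plan is to combine the explicit surface $\Sigma_1^T$ with computer-assisted verification, treating the two claims separately.

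\textbf{Seifert genus.} The upper bound $g(K_T) \leq 3$ is immediate from the surface $\Sigma_1^T$ of Figure~\ref{fig:band_sum}, which has genus $3$. For the matching lower bound, I would first extract a Seifert matrix $V$ from $\Sigma_1^T$ (a $6\times 6$ integer matrix, since $H_1(\Sigma_1^T) \cong \mathbb{Z}^6$) and compute the Alexander polynomial $\Delta_{K_T}(t) = \det(tV - V^T)$; the classical inequality $\deg \Delta_K(t) \leq 2 g(K)$ forces $g(K_T) \geq 3$ provided $\deg \Delta_{K_T} = 6$. This is plausible because the band in Figure~\ref{fig:band_sum} has been chosen to interact nontrivially with the standard genus-1 Seifert surface of $\wh(K)$, so $K_T$ is genuinely more complex than the connected sum $\wh(K) \# T$ (whose Alexander polynomial has degree only $2$). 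Should the Alexander bound fail to be sharp for some twist parameter, I would fall back on the fact that knot Floer homology $\widehat{\hfk}$ detects the Seifert genus (Ozsv\'ath-Szab\'o), computing it from a grid or Heegaard diagram derived from $\Sigma_1^T$.

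\textbf{Symmetry group.} For triviality of $\sym(K_T)$, the plan is to work through the hyperbolic structure on the complement. One first verifies via SnapPy that $S^3 \setminus K_T$ admits a complete hyperbolic structure, so that $K_T$ is a hyperbolic knot. Mostow rigidity then identifies $\sym(K_T)$ with $\mathrm{Isom}(S^3 \setminus K_T)$, using Gordon--Luecke to extend self-homeomorphisms of the complement over $S^3$ preserving the knot. A direct SnapPy computation of this isometry group then yields the desired triviality. This dovetails with the remark already made in the text that distinct members of the family have distinct hyperbolic volumes.

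\textbf{Main obstacle.} The principal subtlety is that both assertions must hold uniformly across the infinite family of knots $K_T$ parameterized by the number of full twists in the band. One approach is to compute for a representative set of twist counts via the ancillary code in \cite{ancillary}; a more conceptual route would argue that adding a full twist to the band changes the Seifert matrix in a controlled way that preserves the degree of $\Delta_{K_T}$, and likewise that the family of hyperbolic manifolds $S^3 \setminus K_T$ arises by Dehn filling a single parent manifold whose isometry group can be analyzed once.
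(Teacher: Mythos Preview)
Your approach is essentially the paper's: its entire proof is to run HFKCalculator for the genus and SnapPy (inside Sage) for hyperbolicity and the isometry group, with documentation deferred to the ancillary files. Your symmetry-group argument---verify hyperbolicity, invoke Mostow rigidity, compute isometries, and use Gordon--Luecke to pass from $\mathrm{Isom}(S^3\setminus K_T)$ to $\sym(K_T)$---is exactly that computation spelled out.

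The only real difference is your preliminary Alexander-polynomial attempt for the genus. It is worth trying, but be wary: since $\Delta_{\wh(K)}=1$, the $4\times 4$ block of the Seifert form of $\Sigma_1^T$ coming from $\wh(\Sigma_1)$ contributes trivially, and there is no a~priori reason the band sum with the trefoil should push $\deg\Delta_{K_T}$ all the way to $6$ rather than merely to $2$ (as it would for a genuine connected sum). You correctly anticipate this possibility and fall back on $\widehat{\hfk}$, which is precisely what the paper does from the outset. On the infinite-family issue you raise, the paper likewise makes no uniform argument and simply points to the ancillary computations; your Dehn-filling idea for handling the symmetry groups uniformly is a natural strengthening but is not carried out there.
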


\begin{proof}
The Seifert genus of $K_{T}$ can be computed with the HFKCalculator \cite{hfk-calc}; the symmetry group of $S^3\setminus K_{T}$ (along with hyperbolicity of this complement) can be checked in SnapPy \cite{snappy} inside Sage \cite{sagemath}. We refer the reader to the ancillary files of this paper \cite{ancillary} for documentation.
\end{proof}

\begin{proof}[Proof of Theorem~\ref{thm:exotic}]
By Proposition~\ref{prop:ktgenus}, we have that $\Sigma_0^T$ and $\Sigma_1^T$ are minimal genus Seifert surfaces. Since $\wh(\Sigma_0)$ and $\wh(\Sigma_1)$ are topologically isotopic rel.\ boundary when pushed into $B^4$, so are $\Sigma_0^T$ and $\Sigma_1^T$.

The band moves shown on the left side of Figure~\ref{fig:band_sum_phi} determine a link cobordism $S$ from $K_T$ to $\wh(K)$, and we may view the surfaces $\Sigma_0^T$ and $\Sigma_1^T$ as compositions $ \wh(\Sigma_0) \circ S$ and $ \wh(\Sigma_1) \circ S$. Next, let $\Psi \in \ckh(K_T)$ be the labeled smoothing that  extends $\Phi \in \ckh(\wh(K))$ using the right half of Figure~\ref{fig:band_sum_phi}. It is straightforward to check that $\Psi$ is a cycle and that the map $\ckh(S)$ sends $\Psi$ to $\Phi$.  It follows that  $\ckh(\Sigma_0^T)(\Psi) = \ckh(\wh(\Sigma_0))(\Phi) = 0$ and $\ckh(\Sigma_1^T)(\Psi) = \ckh(\wh(\Sigma_1))(\Phi) = 1$. This immediately implies that $\Sigma_0^T$ and $\Sigma_1^T$ are not smoothly equivalent rel.\ boundary. By Proposition~\ref{prop:ktgenus}, we know $K_T$ has trivial symmetry group, so we further conclude that there is no diffeomorphism of $B^4$ taking $\Sigma_0^T$ to $\Sigma_1^T$.
\end{proof}

\begin{figure}
	\centering\def\svgwidth{.78\linewidth}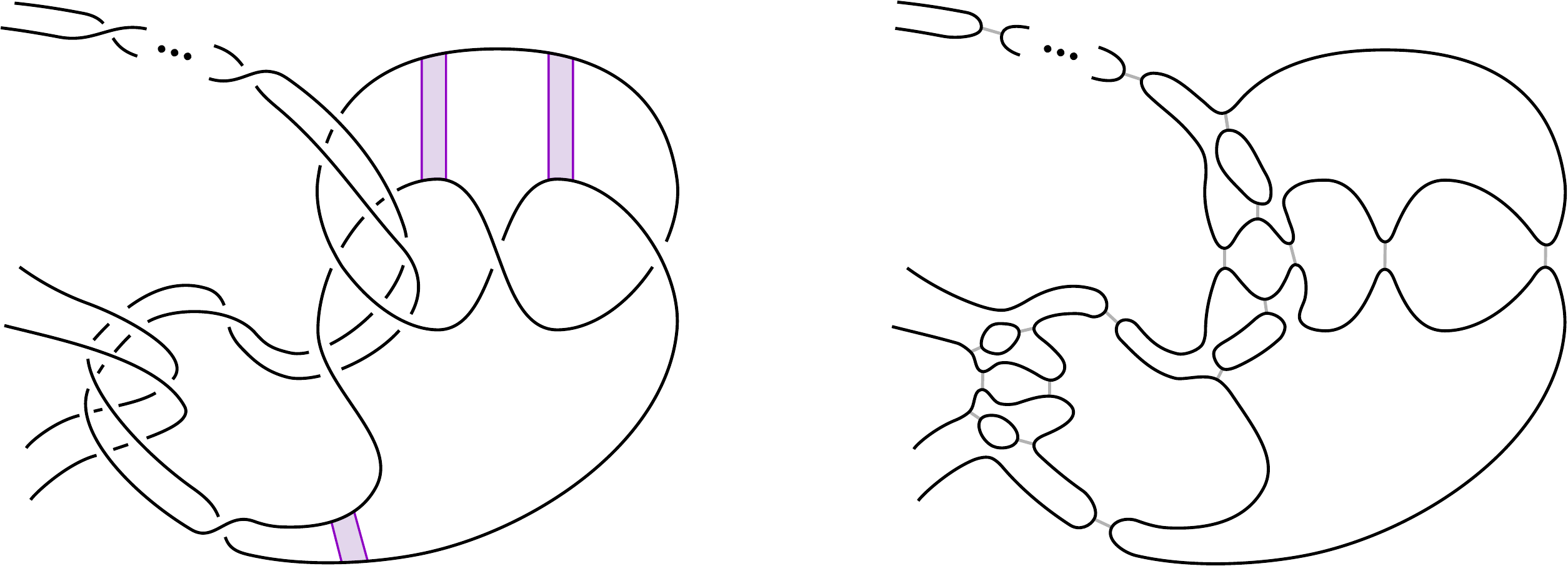
	\caption{Left: bands giving a movie of $\Sigma_1^T$ near $T$. Right: a labeled smoothing of the corresponding tangle near $T$}
	\label{fig:band_sum_phi}
\end{figure}

Interestingly, it is not clear how to perform analogous computations of knot Floer cobordism maps that obstruct two Seifert surfaces from becoming smoothly isotopic when pushed into $B^4$. Consider the following proposition, which follows immediately from functoriality and the grading shift calculations of Juh\'{a}sz--Marengon  \cite{juhaszmarengon}.

\begin{proposition}\label{prop:hfk}
If $S$ is a Seifert surface for a knot $J$ in $S^3$ such that $g(S)> g_3(J)$ and is decorated so that either the $z$-region or $w$-region is a bigon, then $S$ induces a trivial map on $\widehat{\hfk}(J)$. Moreover, any Seifert surface for another knot $J'$ into which $S$ embeds also induces a trivial map on $\widehat{\hfk}(J')$. 
\end{proposition}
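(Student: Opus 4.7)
The plan is to combine functoriality of $\widehat{\hfk}$ under decorated link cobordisms with the Juh\'asz--Marengon grading shift formula and the Ozsv\'ath--Szab\'o genus-detection theorem. First, I would exhibit the map induced by $S$ concretely: pushing $S$ into $B^4$ and removing a small open disk from its interior presents $S$ as a decorated cobordism from an unknot $U$ to $J$ in $S^3 \times [0,1]$, which induces a linear map
$$F_S \colon \widehat{\hfk}(U) \to \widehat{\hfk}(J)$$
of specific bidegree. Since $\widehat{\hfk}(U) \cong \mathbb{F}$ is concentrated in bidegree $(0,0)$, the map $F_S$ is trivial if and only if the image of its generator lies in a trivial bidegree of $\widehat{\hfk}(J)$.

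Next, I would apply the Juh\'asz--Marengon grading shift formula. For a cobordism in $B^4$ the characteristic class terms vanish, so the Alexander grading shift of $F_S$ reduces to an explicit function of $\chi(S_w)$ and $\chi(S_z)$ (with the disk removal tracked). The bigon hypothesis forces one of these Euler characteristics to equal $1$; using $\chi(S)=1-2g(S)$ to solve for the other, a direct substitution shows the image of the generator of $\widehat{\hfk}(U)$ lands in an Alexander grading whose absolute value exceeds $g_3(J)$, with sign depending on whether the $z$- or $w$-region is the bigon. Since Ozsv\'ath--Szab\'o's genus-detection theorem implies $\widehat{\hfk}(J)$ is supported in Alexander gradings $[-g_3(J), g_3(J)]$, this target grading is trivial and $F_S = 0$.

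For the embedding statement, suppose $S$ sits as a subsurface of a Seifert surface $S'$ for a knot $J'$. Then I would decompose $S' = S \cup C$, where $C$ is a cobordism from $J$ to $J'$ that may be realized in a collar $S^3 \times [0,1]$ of $\partial B^4$. Extending the bigon decoration on $S$ arbitrarily over $C$ produces a decoration on $S'$, and functoriality of the decorated cobordism maps yields
$$F_{S'} = F_C \circ F_S = 0.$$

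The principal obstacle is pinning down the Juh\'asz--Marengon conventions (signs, normalizations, and the contribution of the disk removal) to confirm that the bigon hypothesis together with $g(S) > g_3(J)$ is sharp enough to push the image outside the support of $\widehat{\hfk}(J)$; beyond that, the argument is an essentially tautological assembly of the cited results.
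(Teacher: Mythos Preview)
Your proposal is correct and matches the paper's approach exactly: the paper does not give a detailed proof but simply states that the proposition ``follows immediately from functoriality and the grading shift calculations of Juh\'asz--Marengon,'' which is precisely the argument you have spelled out. Your concern about conventions is unwarranted --- with a bigon $z$- or $w$-region the Alexander shift is $\pm g(S)$, which under the hypothesis $g(S)>g_3(J)$ lands outside the support of $\widehat{\hfk}(J)$, and the factorization $F_{S'}=F_C\circ F_S$ (with the disk removed from inside $S$ so that the bigon decoration persists on $S'$) handles the embedding statement.
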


In particular, this means that the usual cobordism maps on $\widehat{HFK}$ with trivial decorations cannot distinguish Seifert surfaces that are not of minimal Seifert genus. In contrast, our examples in Theorem~\ref{thm:khovanov} are distinctly {\textsl{not}} minimal genus Seifert surfaces. The surfaces in Theorem~\ref{thm:exotic} are minimal genus, but contain subsurfaces that are not minimal genus (specifically, the surfaces from Theorem~\ref{thm:khovanov}). Thus, we cannot {\textsl{easily}} use the knot Floer cobordism maps to distinguish two Seifert surfaces in $B^4$. In order to do so, one would either have to work with nontrivial decorations, find some new way of producing minimal genus examples, or perhaps use a further refinement of knot Floer homology (as opposed to $\widehat{\hfk}$).

\section{Obstructions from branched covers}
\label{sec:branched}

\subsection{A  topological counterexample} \label{section:topological} 

Let $\Sigma_0$ and $\Sigma_1$ be the surfaces of Figure~\ref{fig:46band}, with interiors pushed slightly into $B^4$.

\begin{theorem}\label{thm:doublecover}
Let $X_i$ be the double cover of $B^4$ branched along $\Sigma_i$ for $i=0,1$. The manifolds $X_0$ and $X_1$ are not homeomorphic.
\end{theorem}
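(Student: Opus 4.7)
The plan is to show that the intersection forms on $H_2(X_0;\Z)$ and $H_2(X_1;\Z)$ are not isomorphic as integral symmetric bilinear forms; this suffices since any homeomorphism $X_0 \to X_1$ would carry one intersection form to the other.

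My first step is to describe $X_i$ concretely. A routine Mayer--Vietoris argument applied to a regular neighborhood of the pushed-in surface and its complement in $B^4$ yields the following well-known output: if $\Sigma \subset B^4$ is a pushed-in, genus-$g$ Seifert surface for a knot $K\subset S^3$, then the double cover of $B^4$ branched along $\Sigma$ is a simply connected, compact $4$-manifold whose boundary is the double cover of $S^3$ branched along $K$, with $H_2\cong\Z^{2g}$ and intersection form represented by $V+V^T$, where $V$ is a Seifert matrix for $\Sigma$. Applied to Figure~\ref{fig:46band}, this reduces the theorem to a direct calculation: pick the obvious core curves on the two visible bands of each of $\Sigma_0$ and $\Sigma_1$, read off the $2\times 2$ Seifert matrices $V_0,V_1$ by tabulating linking numbers of their push-offs, and form the symmetric matrices $Q_i := V_i+V_i^T$.

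The challenge is that $Q_0$ and $Q_1$ necessarily share the same rank ($=2$), the same determinant (the order of the first homology of the double branched cover of $S^3$ along $K$, up to sign), and the same signature (a knot invariant of $K$), so none of these numerical invariants can distinguish them. I therefore plan to use a finer invariant of rank-$2$ integer symmetric forms, such as the set of primitively represented values or the parity (even versus odd), to show $Q_0 \not\cong Q_1$ over $\Z$. I expect the distinction to be as concrete as one form representing $\pm 1$ while the other does not; the combinatorial difference between the Whitehead-style clasp in $\Sigma_1$ and the plain band structure of $\Sigma_0$ should produce visibly different self-linking data on the chosen $H_1$-generators, forcing this kind of asymmetry.

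The main obstacle is the final inequivalence step. Since the ``easy'' numerical invariants coincide, I must verify by hand that the two Gram matrices are not equivalent under $\mathrm{GL}_2(\Z)$, either by enumerating small primitively represented values or by invoking the classical classification of binary quadratic forms of fixed discriminant. Once this is in hand, functoriality of the intersection form under homeomorphism closes the argument.
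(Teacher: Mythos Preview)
Your proposal is correct and matches the paper's approach almost exactly: compute Seifert matrices for the obvious band cores, form $Q_i=V_i+V_i^T$, and distinguish the two rank-$2$ forms by a represented value. One small correction to your expectation: both $Q_0$ and $Q_1$ turn out to be \emph{even} forms (each $Q_i(m,n)$ is visibly divisible by $2$), so neither represents $\pm 1$; the paper instead observes that $Q_1$ represents $-2$ (via the $(-2)$-diagonal entry) and checks by an elementary discriminant argument that $Q_0(m,n)=-2(6m^2+5mn+2n^2)$ never equals $\pm 2$, which is exactly the ``small primitively represented values'' route you outlined.
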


Theorem~\ref{thm:doublecover} implies Theorem~\ref{thm:ambient}, since a locally flat isotopy from $\Sigma_0$ to $\Sigma_1$ would induce a homeomorphism from $X_0$ to $X_1$.

\begin{proof}[Proof of Theorem~\ref{thm:doublecover}]
In Figure~\ref{fig:doublecover}, we illustrate simple closed curves $a_0, b_0$ on $\Sigma_0$ and $a_1, b_1$ on $\Sigma_1$ that span the first homology. The resulting Seifert matrices (after appropriately orienting) are
\[V_0:=\left(\begin{matrix}-6&-2\\-3&-2\end{matrix}\right) \text{ for $\Sigma_0$} \qquad\text{ and }\qquad V_1:=\begin{pmatrix}-6&\phantom{-}0\\\phantom{-}1&-1\end{pmatrix}\text{ for $\Sigma_1$.}\]

The intersection form of $H_2(X_i;\mathbb{Z})$ is given by $V_i+V_i^T$, where the basis comes from doubling relative homology classes in $B^4\setminus\nu(\Sigma_i)$ bounded by $a_i$ and $b_i$ \cite{kauffman}. One can see this explicitly in the handle diagrams of $X_0$ and $X_1$ in the bottom of Figure~\ref{fig:doublecover}, obtained via the procedure of \cite[Section 2]{kirbyakbulut}. In particular, $X_1$ contains a locally flat embedded oriented surface $F$ whose Euler number is $-2$. On the other hand, the intersection form on $H_2(X_0;\mathbb{Z})$ is \[V_0+V_0^T=\begin{pmatrix}-12&-5\\-5&-4\end{pmatrix}.\] Letting  $A_0$ and $B_0$ denote the corresponding generators of $H_2(X_0;\mathbb{Z})$, we see that
\begin{align*}
    (mA_0+nB_0)\cdot(mA_1+nB_1)&=-12m^2-4n^2-10mn\\&=-2(6m^2+2n^2+5mn).\end{align*}
If there were a homeomorphism from $X_1$ to $X_0$, then the image of $F$ would have Euler number $\pm 2$, implying that there is an integer solution to $$6m^2+2n^2+5mn=\pm 1.$$
Solving for $n$, the discriminant of this equation is \[(5m)^2-4\cdot 2\cdot(6m^2\mp 1)=-23m^2\pm8.\] We must have this term be nonnegative to have a real solution. Since $m$ is an integer, we obtain $m=0$. This yields $2n^2=\pm 1$, a contradiction.
\end{proof}

\begin{figure}
       \labellist
\small\hair 2pt
\pinlabel $\Sigma_0$ at 10 250
\pinlabel $\textcolor{MutedRed}{a_0}$ at 10 180
\pinlabel $\textcolor{MutedBlue}{b_0}$ at 180 180
\pinlabel $M_0$ at 5 90
\pinlabel $\textcolor{MutedRed}{-12}$ at 295 15
\pinlabel $\textcolor{MutedBlue}{-2}$ at 295 65

\pinlabel $\Sigma_1$ at 215 250
\pinlabel $\textcolor{MutedRed}{a_1}$ at 245 115
\pinlabel $\textcolor{MutedBlue}{b_1}$ at 382 180
\pinlabel $M_1$ at 210 90
\pinlabel $\textcolor{MutedRed}{-12}$ at 93 -5
\pinlabel $\textcolor{MutedBlue}{-4}$ at 93 80
\endlabellist
    \includegraphics[width=110mm]{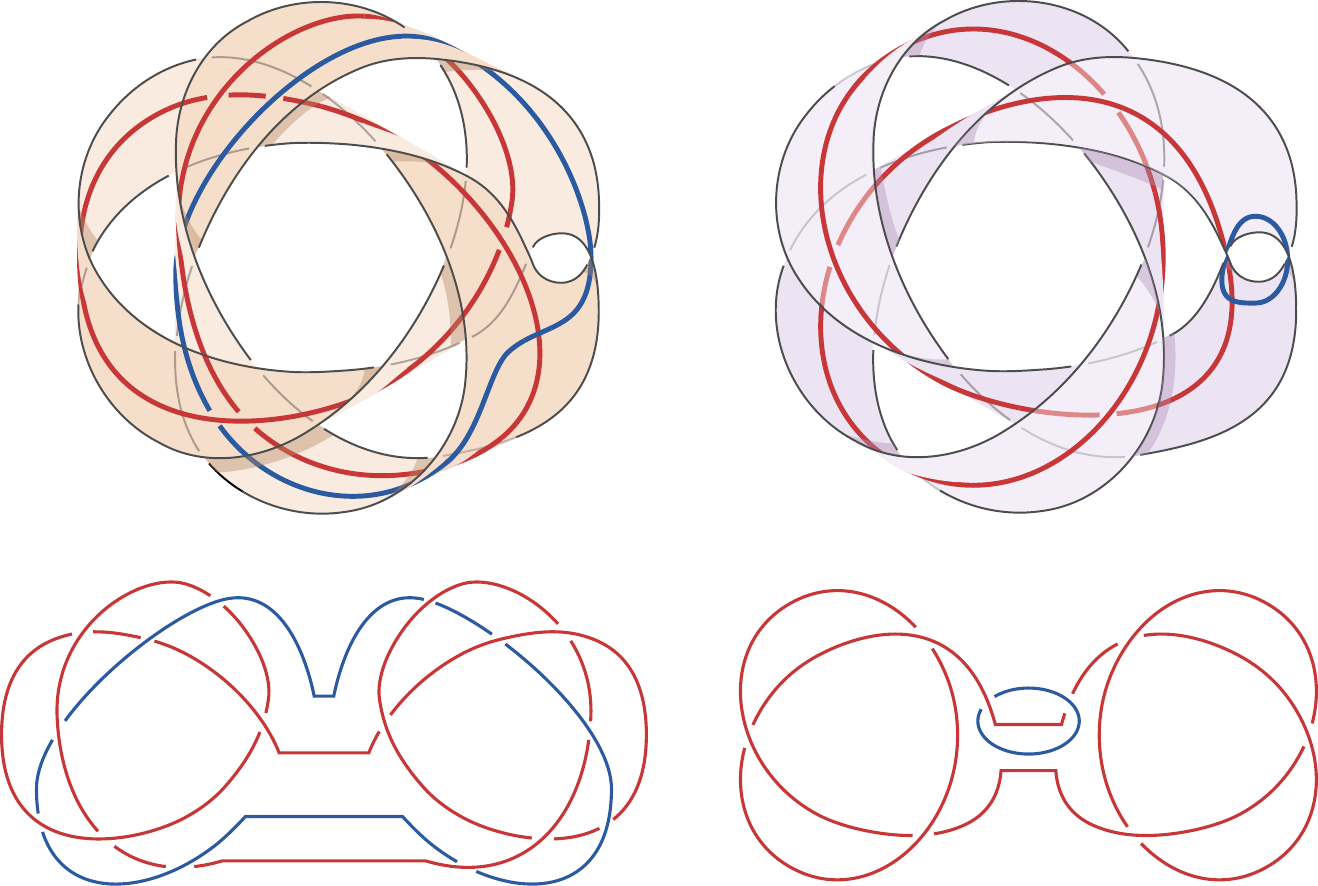}
    \caption{Top: a symplectic basis on each of $\Sigma_0$ and $\Sigma_1$. Bottom: handle diagrams for the double branched covers $X_0$ and $X_1$.}
    
    \label{fig:doublecover}
\end{figure}

\begin{remark}
We find the argument of Theorem~\ref{thm:doublecover} striking in that its methods are extremely elementary and have been known to topologists for fifty years, yet are sufficient to answer a question in the literature that has been open for forty years.
\end{remark}

\begin{remark}
As shown in Figure~\ref{fig:union}, the union of the surfaces $\Sigma_0,\Sigma_1$ in $S^3$ is a standard genus-2 surface. Thus, even though $\Sigma_0,\Sigma_1$ are not topologically isotopic in $B^4$, their union $(B^4,\Sigma_0)\cup\overline{(B^4,\Sigma_1)}$ forms a smoothly unknotted closed surface in $S^4$.   It seems unlikely that gluing together modifications of the examples in this paper could produce oriented exotically knotted closed surfaces in $S^4$. 
\end{remark}

\begin{remark}
   A tube can be added to each of $\Sigma_0$ and $\Sigma_1$ to obtain a pair of genus-1 surfaces that are isotopic rel boundary in $B^4$. (Or in other words, $\Sigma_0$ and $\Sigma_1$ have {\emph{stabilization distance}} one.) We illustrate this in Figure~\ref{fig:stabilizationdistance}; here we draw a ribbon surface in $B^4$ by drawing its boundary knot and bands attached to that knot representing index-1 points of the surface with respect to radial height.
    
    In the top row of Figure~\ref{fig:stabilizationdistance}, we show that two particular bands attached to a link are isotopic. (Note that we allow the ends of the bands to slide along the link, and we allow the two bands to pass through each other). This link is obtained by surgering $K$ along the other three bands in the bottom two illustartions. We conclude that the surfaces illustrated in the bottom of Figure~\ref{fig:stabilizationdistance} are smoothly isotopic rel.\ boundary in $B^4$. The surface on the left is obtained from $\Sigma_0$ by attaching a single tube and the surface on the right is obtained from $\Sigma_1$ by attaching a single tube.
\end{remark}

\begin{figure}
    \centering
    \includegraphics[width=.85\linewidth]{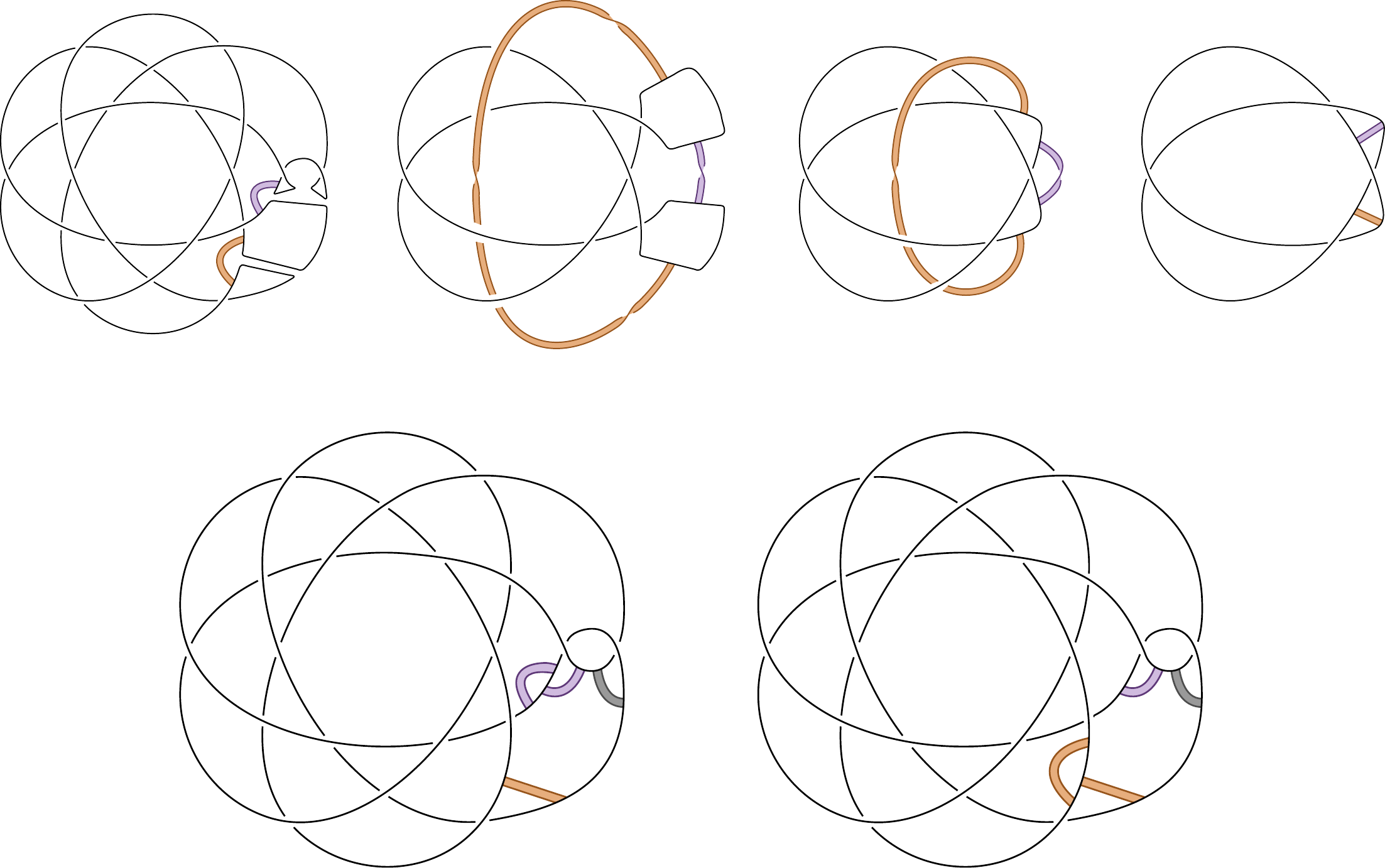}
    \captionsetup{width=.85\linewidth}
    \caption{Top row, from left to right: We draw two bands attached to a link. We produce an isotopy from one band to the other. Bottom row: We conclude that the two indicated surfaces are smoothly isotopic rel.\ boundary in $B^4$.}
    \label{fig:stabilizationdistance}
\end{figure}

\begin{remark}
    We can modify $\Sigma_0$ and $\Sigma_1$ to produce pairs of non-orientable surfaces; see Remark~\ref{rem:nonorientgenus} or simply consider the specific pair $\Sigma_0'$ and $\Sigma_1'$ in Figure~\ref{fig:nonorient}. The 2-fold branched covers $X_0',X_1'$ of $B^4$ branched along $\Sigma_0',\Sigma_1'$ (after pushing their interiors into the interior of $B^4$) have intersection forms on $H_2$ respectively presented by \[\begin{pmatrix}-12&-5\\-5&-3\end{pmatrix}\qquad\text{ and }\qquad\begin{pmatrix}-12&\phantom{-}1\\\phantom{-}1&-1\end{pmatrix},\] which can be easily distinguished by e.g.\ the same argument as in Theorem~\ref{thm:doublecover} (there is no embedded, oriented locally flat surface in $X_0'$ with Euler number $-1$, while there is in $X_1'$). Therefore, the surfaces $\Sigma_0'$ and $\Sigma_1'$ are not topologically isotopic in $B^4$.
\end{remark}

\begin{figure}
    \centering
    \labellist
    \small\hair 2pt
\pinlabel $\Sigma_0'$ at 5 140
\pinlabel $\Sigma_1'$ at 212 140
\endlabellist
    \includegraphics[width=90mm]{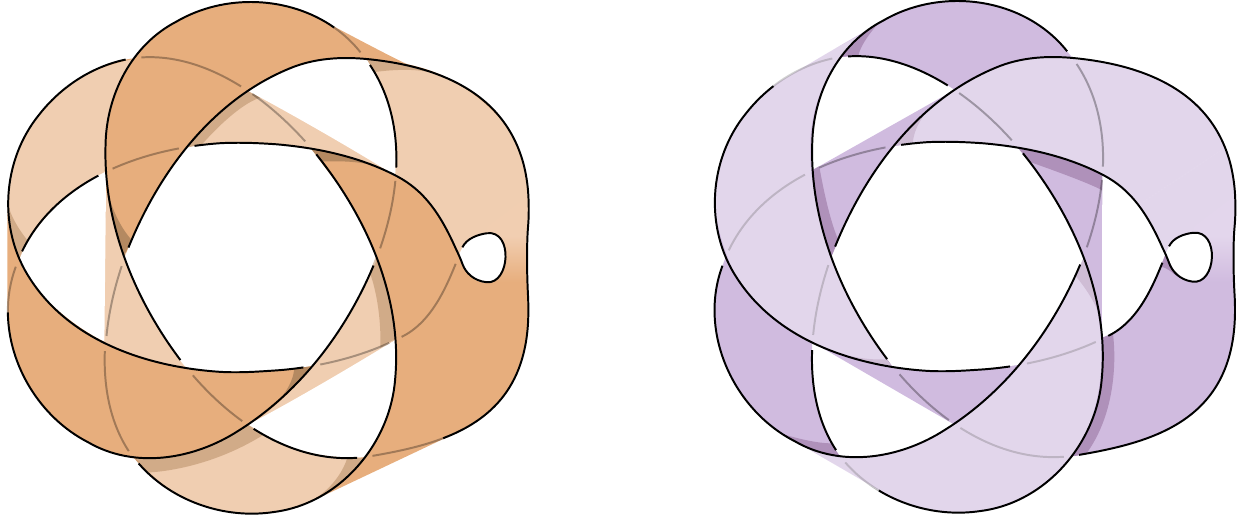}
    \caption{Non-orientable surfaces that are not topologically isotopic in $B^4$.}
    \label{fig:nonorient}
\end{figure}

As an aside, we note that the intersection form argument of Theorem~\ref{thm:doublecover} can imply other interesting results about the structure of a surface in $B^4$. For example, consider the M\"{o}bius band $M$ bounded by the knot $\tt{8_{20}}$ illustrated in Figure~\ref{fig:mobius} (left). The double cover $X$ of $B^4$ branched along $M$ admits a handle decomposition consisting of a 0-handle and a 2-handle attached along a right-handed trefoil with framing $9$. We conclude from the intersection form on $H_2(X;\mathbb{Z})$ that $X$ does not admit a $\smash{ \mathbb{CP}^2}$ or $\smash{ \overline{\mathbb{CP}}^2}$ summand, and hence $M$ does not decompose topologically as a connected sum of an unknotted projective plane and a slice disk for $\tt{8_{20}}$. Of course, this would be obvious in the case that $\partial M$ were not slice -- but given that $\tt{8_{20}}$ is slice, this is not so clear and might be viewed as a negative answer to a relative version of the Kinoshita conjecture, which asks whether every projective plane in $S^4$ decomposes as a connected sum of a knotted 2-sphere and an unknotted projective plane.

\begin{figure}
    \centering
    \labellist
    \pinlabel \textcolor{orangenew}{9} at 440 140 
    \endlabellist
    \includegraphics[width=100mm]{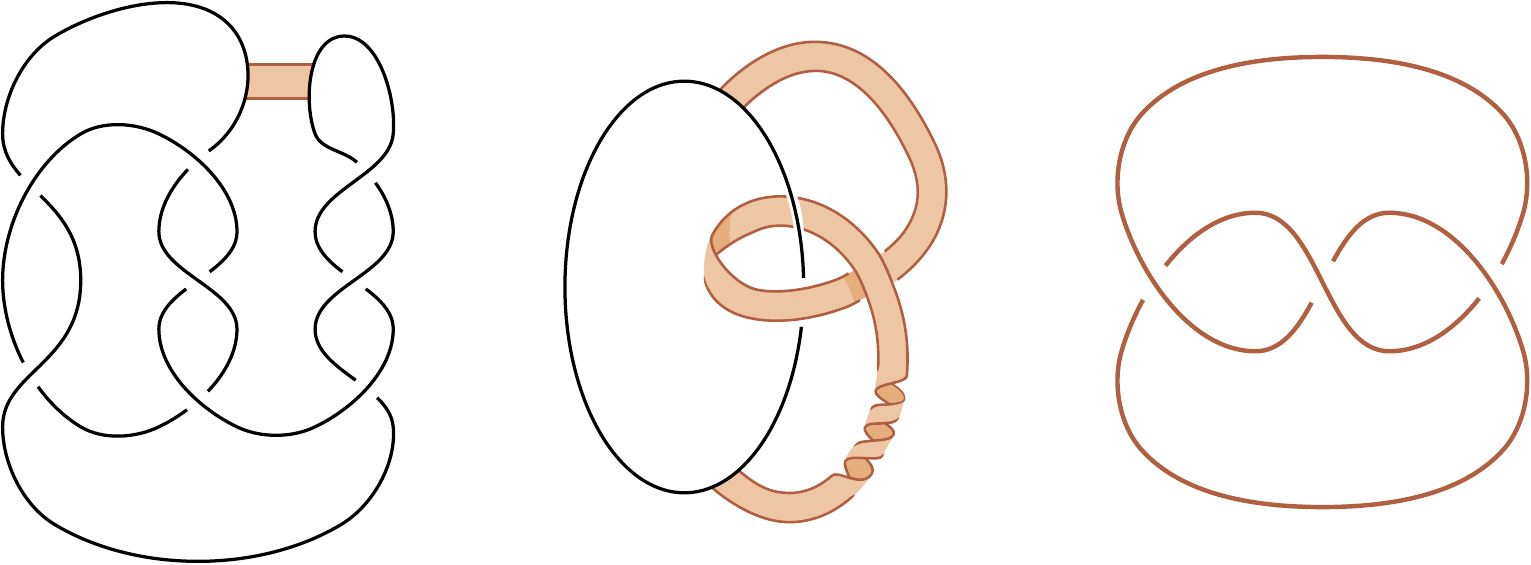}
    \caption{Left: A M\"obius band $M$ in $B^4$ bounded by $\tt{8_{20}}$, consisting of a disk bounded by the drawn curve together with the indicated nonorientable band. Middle: we isotope the left picture. Right: we obtain the double cover of $B^4$ branched along $M$ via the procedure of \cite{kirbyakbulut} (see also \cite[Section 6.3]{gompfstipsicz} or \cite[Chapter 11]{akbulutbook}).} 
    
    \label{fig:mobius}
\end{figure}

\begin{corollary}\label{cor:mobius}
Any 1-minimum ribbon M\"obius band $M$ properly embedded in $B^4$ whose boundary is a knot $J$ with $\det(J)>1$ does not decompose topologically as a connected sum of a slice disk for $J$ and an unknotted projective plane.
\end{corollary}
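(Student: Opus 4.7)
The plan is to mimic the intersection-form argument sketched in the paragraph preceding the corollary, but applied to a general 1-minimum ribbon Möbius band. Let $X$ denote the 2-fold cover of $B^4$ branched along $M$. Because $M$ is a 1-minimum ribbon Möbius band, its Morse decomposition has exactly one minimum and one index-1 critical point, and the procedure of Akbulut--Kirby (as referenced for the $\mathtt{8_{20}}$ example) produces a handle decomposition of $X$ consisting of a single 0-handle and a single 2-handle. In particular, $H_2(X;\mathbb{Z})\cong\mathbb{Z}$ and the intersection form is represented by the $1\times 1$ matrix $(n)$, where $n$ is the framing of this 2-handle.

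The next step is to identify $|n|$. Since $\partial X = \Sigma_2(J)$ (the double branched cover of $S^3$ along $J$) and $|H_1(\Sigma_2(J);\mathbb{Z})|=\det(J)$, I would compare this against the value computed from the handle decomposition: the presentation $H_1(\partial X;\mathbb{Z})\cong \mathbb{Z}/|n|\mathbb{Z}$ obtained by reading off the 2-handle attachment. This forces $|n|=\det(J)>1$, so the intersection form on $H_2(X;\mathbb{Z})$ is the non-unimodular rank-one form $(\pm\det(J))$.

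Now suppose for contradiction that $M$ decomposes topologically as $D\#P$, where $D\subset B^4$ is a slice disk for $J$ and $P\subset S^4$ is an unknotted projective plane. Branched covers are compatible with connected sums of branched pairs, so the corresponding double branched cover decomposes as
\[
X\;\cong\;X_D\,\#\,\Sigma_2(S^4,P),
\]
where $\Sigma_2(S^4,P)$ is either $\mathbb{CP}^2$ or $\overline{\mathbb{CP}}^{\,2}$ depending on the twisting of $P$. Consequently $H_2(X;\mathbb{Z})\cong H_2(X_D;\mathbb{Z})\oplus\mathbb{Z}$, and the intersection form of $X$ splits as a direct sum that includes the unimodular rank-one form $(\pm 1)$ coming from the $\pm\mathbb{CP}^2$ summand. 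Since $H_2(X;\mathbb{Z})/\text{torsion}\cong\mathbb{Z}$ has rank one, the Betti number count forces the $\mathbb{CP}^2$ summand to account for \emph{all} of the free part, so the full intersection form must equal $(\pm 1)$. This contradicts the computation that it is $(\pm\det(J))$ with $\det(J)>1$.

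The main obstacle I anticipate is verifying cleanly that the decomposition of $M$ as a topological connected sum of pairs induces the corresponding connected-sum decomposition of the double branched cover $X$ in the topological category; this is classical for the smooth setting, but I would want to be careful invoking it locally flatly, which should follow by noting that the branching set is locally flat and the connect sum is performed in the complement of a small standard ball meeting each piece trivially. Apart from this, the argument reduces to the elementary intersection-form computation already used in Theorem~\ref{thm:doublecover}.
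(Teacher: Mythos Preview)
Your proposal is correct and follows essentially the same approach as the paper's proof: both compute the intersection form of the double branched cover $X$ as $(\pm\det(J))$ via the 0-handle/2-handle decomposition and the order of $H_1(\Sigma_2(J))$, and then observe that this rank-one form cannot split off a $(\pm 1)$ summand coming from a $\mathbb{CP}^2$ or $\overline{\mathbb{CP}}^{\,2}$. Your added caution about the topological connect-sum of branched pairs is reasonable but, as you note, is handled by working in a standard ball pair where the branch locus is unknotted.
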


Corollary~\ref{cor:mobius} applies to the M\"{o}bius band for ${\tt{8_{20}}}=P(2,-3,3)$ featured in Figure~\ref{fig:mobius}, as well as the analogous M\"{o}bius band for the slice pretzel knot $P(2n,-2n-1,2n+1)$ for any $n\ge 1$. (Note that this knot has determinant $4n^2+4n+1$.) Each of these pretzel knots admits a standard ribbon disk obtained by a band move across the even strand. It is a relatively easy exercise to check that gluing this disk to the described M\"obius band yields a smoothly unknotted projective plane in $S^4$.

\begin{proof}[Proof of Corollary~\ref{cor:mobius}]
Since $M$ has one minimum, one saddle, and no maxima, the 2-fold cover $X$ of $B^4$ branched along $M$ can be built from a single 0-handle and 2-handle. The boundary of $X$ is the 2-fold cover of $S^3$ branched over $J$, which has first homology of order $\det(J)$. This must then be the framing of the 2-handle up to sign, so the intersection form on $H_2(X;\mathbb{Z})=\mathbb{Z}$ is $[\pm\det(J)]$. We conclude that $X$ does not admit a $\mathbb{CP}^2$ or $\smash{\overline{\mathbb{CP}}^2}$ summand and hence $M$ does not admit an unknotted projective plane summand.
\end{proof}

\subsection{Infinite families of surfaces up to isotopy rel boundary}\label{subsec:infinite_family}

In this section, we prove Theorems~\ref{thm:infinite} and \ref{thm:infiniteunlink} by constructing two infinite families of surfaces that are not isotopic rel.\ boundary. In the first case, the surfaces are disconnected but not even topologically isotopic rel.\ boundary, while in the second case the surfaces are connected but we give only a smooth obstruction to isotopy rel.\ boundary.

We begin with Theorem \ref{thm:infiniteunlink}. Consider the surface $S_0$ in Figure~\ref{fig:infinitefamilybands}. This surface is a split union of a disk and an annulus; we illustrate a torus $T$ intersecting $S_0$ in two circles. Let $S_n$ be obtained from $S_0$ by performing $n$ meridional twists to $S_0$ about $T$.

\begin{figure}
    \centering
        \labellist
\small\hair 2pt
\pinlabel {\textcolor{darkgray}{$S_0$}} at -5 105
\pinlabel{\textcolor{blue}{$T$}} at 33 46
\pinlabel{\textcolor{MutedRed}{$b$}} at 258 78
\pinlabel{\textcolor{MutedRed}{$b_n$}} at 355 98
\pinlabel{$n$} at 347 55
\pinlabel{\scalebox{.6}{$\boldsymbol{\hdots}$}} at 355 79
\endlabellist
    \includegraphics[width=125mm]{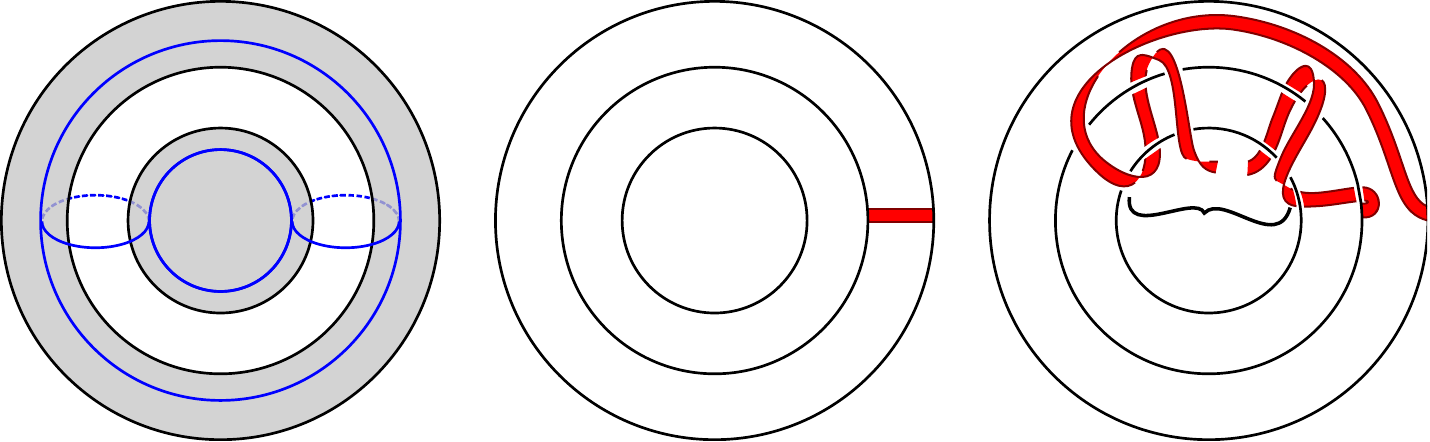}
    \caption{Left: the surface $S_0$ and  the torus $T$. Middle: the band $b$. Right: the band $b_n$.}
    \label{fig:infinitefamilybands}
\end{figure}

\begin{proposition}
For $n\neq 0$, the surfaces $S_n$ and $S_0$ are not topologically isotopic rel.\ boundary.
\end{proposition}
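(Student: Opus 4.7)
The plan is to distinguish $S_n$ from $S_0$ by examining the peripheral structure of $\pi_1(B^4 \setminus S_n)$. Because $S_n$ is obtained from $S_0$ by an ambient diffeomorphism of $B^4$ (the extended torus twist $T_n$), the two complements are homeomorphic, so $\pi_1(B^4 \setminus S_n) \cong \pi_1(B^4 \setminus S_0) \cong \mathbb{Z} \ast \mathbb{Z}$, generated freely by a meridian $d$ of the disk component and a meridian $a$ of the annulus component. What distinguishes the two surfaces is how the meridians of the three components of $L = \partial S_n$ embed into this group: under the inclusion $\pi_1(S^3 \setminus L) \to \pi_1(B^4 \setminus S_n)$, the meridian of $\partial D$ maps to $d$, while the meridians $m_1, m_2$ of the two boundary circles of the annulus component map to $a$ and $w_n a w_n^{-1}$ respectively, for a word $w_n \in F_2 := \langle d, a \rangle$ well-defined modulo the cyclic centralizer $\langle a \rangle$. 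Any topological isotopy from $S_n$ to $S_0$ rel boundary would induce an isomorphism intertwining these inclusions, forcing $w_n$ and $w_0$ to represent the same coset in $F_2 / \langle a \rangle$.

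I would compute $w_n$ by representing it via a path on the annulus joining neighborhoods of its two boundary circles, pushed slightly into the complement. This path traverses the collar of the torus $T$ twice---once entering and once exiting the solid torus $V$ bounded by $T$---and the meridional twist $T_n$ inserts an $n$-fold winding around the core of $V$ at each crossing. Since $V$'s core is isotopic to $\partial D$, these windings translate into factors of $d^n$ and $d^{-n}$ in $\pi_1$, conjugating the middle subword $v$ corresponding to the interior-$V$ segment of the path. Although the two factors cancel abelianly (so the image of $w_n$ in $H_1$ equals that of $w_0$), the non-abelian word $w_n$ differs from $w_0$ by a nontrivial commutator-type element involving $[d^n, v]$, provided $v \notin \langle d \rangle$. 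This last condition is expected to hold from the geometry in Figure~\ref{fig:infinitefamilybands}, since the middle segment contributes nontrivially to the $a$-direction of $\pi_1$.

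The main obstacle I foresee is certifying this noncommutativity condition $v \notin \langle d \rangle$, which requires pinning down the precise geometric configuration of $T$, $V$, and $A_0$ from Figure~\ref{fig:infinitefamilybands} and executing a Van Kampen-style computation of the peripheral word carefully enough to rule out that $w_n w_0^{-1}$ lies in $\langle a \rangle$. Should the direct peripheral-structure argument prove unwieldy, an equivalent obstruction can be extracted from the Alexander module of $B^4 \setminus S_n$ viewed as a $\mathbb{Z}[\mathbb{Z}^2]$-module (with $\mathbb{Z}^2 = H_1$), where the $n$-dependence of the torus twist produces a well-defined module-theoretic invariant distinguishing the two surfaces up to topological isotopy rel boundary.
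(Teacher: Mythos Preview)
Your approach has a genuine gap that is likely fatal rather than merely technical. The invariant you propose --- the map $\iota_n\colon \pi_1(S^3\setminus L)\to \pi_1(B^4\setminus S_n)$ up to isomorphism of the target, or equivalently its kernel --- does not depend on $n$. The surfaces $S_n$ are related to $S_0$ by the extended twist $\hat\tau^n$, so $\ker\iota_n=(\tau^n)_*(\ker\iota_0)$ as subgroups of $\pi_1(S^3\setminus L)$. But $\tau$ is a \emph{meridional} twist along $T=\partial V$; the meridian of $V$ bounds a disk in $V$ that (in the configuration of Figure~\ref{fig:infinitefamilybands}) is disjoint from $L$, so this meridian is nullhomotopic in $S^3\setminus L$ and $\tau_*$ is the identity on $\pi_1(S^3\setminus L)$. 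Hence $\ker\iota_n=\ker\iota_0$ for all $n$, and your word $w_n$ (well-defined modulo the centralizer $\langle a\rangle$) is independent of $n$. Concretely, the ``factors of $d^n$'' you insert when the path crosses $T$ are trivial in $\pi_1(B^4\setminus S_n)$: once the interior of $S_n$ is pushed into $B^4$, the meridian disk of $V$ lies entirely in $S^3\setminus L\subset B^4\setminus S_n$. Your geometric assumption that the core of $V$ is isotopic to $\partial D$ also appears inconsistent with the hypothesis that $T$ meets $S_0$ in exactly two circles: if $\partial D$ were the core of $V$, the disk $D$ would have to cross $T$ as well. The Alexander-module fallback fails for the same reason, since the covering data is determined by $\iota_n$.

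The paper's proof is organized around precisely this difficulty. Since $(B^4,S_n)\cong(B^4,S_0)$ via a diffeomorphism that is the identity on $L$ but not on all of $S^3$, no invariant of the pair alone --- $\pi_1$ with peripheral data, branched covers, etc. --- can see the difference. The paper instead attaches a fixed band $b\subset S^3$ to each surface, converting a hypothetical rel-boundary isotopy $S_0\sim S_n$ into a \emph{free} isotopy $S_0^b\sim S_n^b$; pulling back by $\tau^{-n}$ turns this into a free isotopy $S_0^b\sim F_n:=S_0\cup \tau^{-n}(b)$. Now the surfaces are genuinely different as abstract pairs, and the paper distinguishes them by $H_1$ of their double branched covers ($\mathbb{Z}$ versus $\mathbb{Z}_{2|n|}$). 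The band-attachment step is the essential idea you are missing.
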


\begin{proof}
First note that $S_n$ is freely isotopic $S_0$. Indeed, $S_n$ is obtained from $S_0$ by applying $n$ meridional twists $\tau$ along the torus $T$ in the complement of the unlink $\partial S_0$. As mentioned in \S\ref{subsec:spinning}, the  diffeomorphism $\tau$ of $S^3$ (which is supported near $T$) is isotopic to the identity via an isotopy supported near the solid torus that $T$ bounds in Figure~\ref{fig:infinitefamilybands} (left).  Push the interiors of $S_0$ and $S_n$ into $B^4$, and extend $\tau$ over $B^4$ to preserve the relation $S_n=\tau^n(S_0)$.

\begin{figure}
    \centering
        \labellist
\small\hair 2pt
\pinlabel $S^b_0$ at 117.5 215 
\pinlabel $F_n$ at 385 278
\pinlabel{\textcolor{MutedRed}{$0$}} at 150 90
\pinlabel{\textcolor{darkgray}{$0$}} at 217 58
\pinlabel{\textcolor{MutedRed}{$-2n$}} at 445 103
\pinlabel{\textcolor{darkgray}{$0$}} at 488 58
\pinlabel{$n$} at 320 145
\pinlabel{$n$} at 447 145
\small\hair 2pt
\pinlabel $\Sigma(S^b_0)$ at 20 93
\pinlabel $\Sigma(F_n)$ at 295 93
\pinlabel{\textcolor{BrickRed}{\large$\boldsymbol{\vdots}$}} at 400 223
\pinlabel{\textcolor{BrickRed}{$\cdots$}} at 386.5 80
\pinlabel{\textcolor{BrickRed}{$\cdots$}} at 386.5 35.5
\endlabellist
\vspace{.1in}
    \includegraphics[width=130mm]{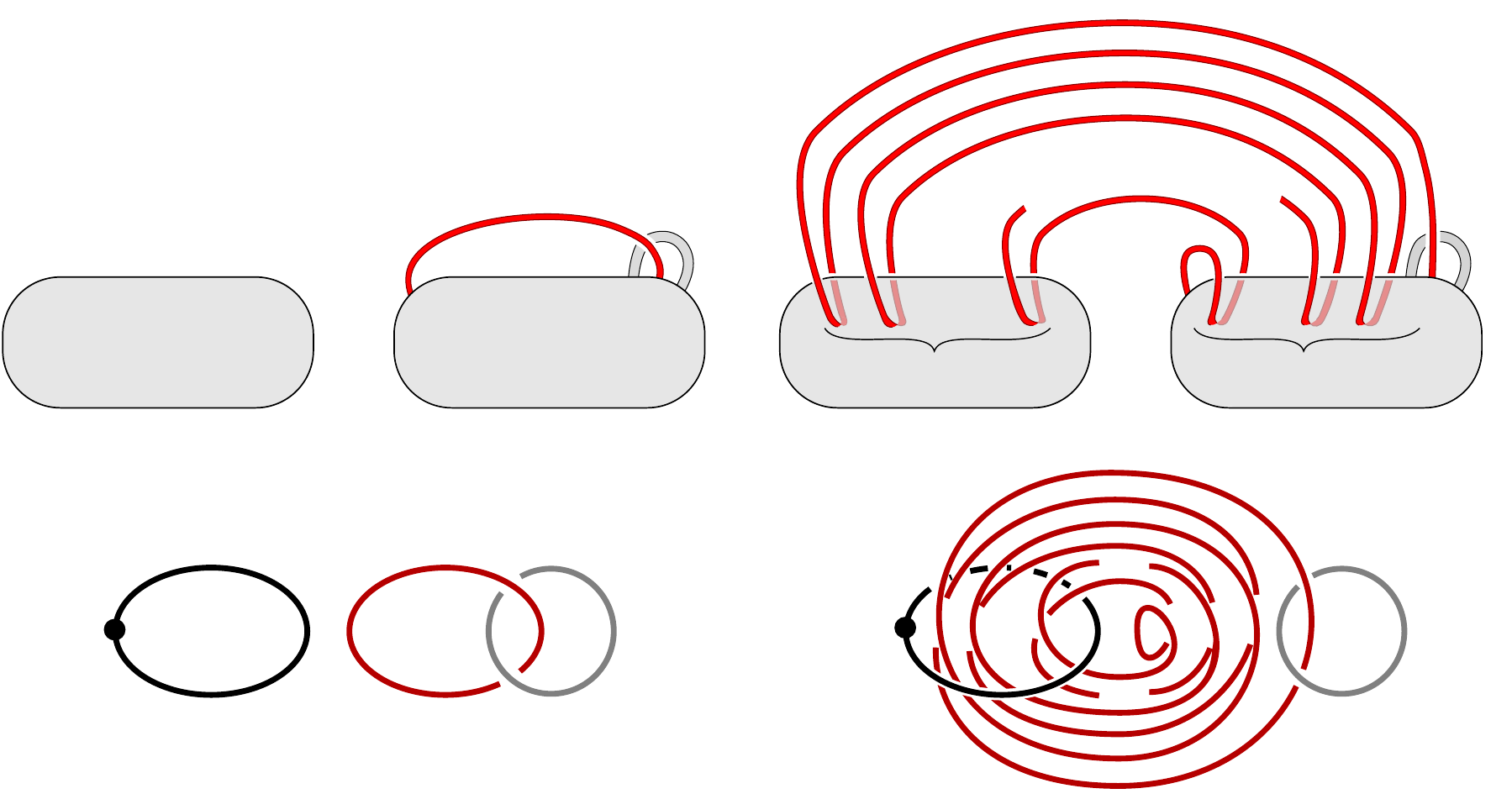}
    \caption{In the top row, we draw $S^b_0$ and $F_n$ as ribbon surfaces in $B^4$. If $S_0$ and $S_n$ are isotopic rel.\ boundary, then $S^b_0$ and $F_n$ are freely isotopic. In the bottom row, we perform the algorithm of \cite{kirbyakbulut} to obtain Kirby diagrams of the respective 2-fold branched covers $\Sigma(S^b_0)$ and $\Sigma(F_n)$.}
    \label{fig:infinitefamilycovers}
\end{figure}

Towards a contradiction, suppose $S_n$ is isotopic to $S_0$ rel.~boundary in $B^4$. Consider the enlarged surfaces $S_0^b,S_n^b \subset B^4$ obtained by attaching the band $b \subset S^3$ shown in Figure~\ref{fig:infinitefamilybands} to each of $S_0,S_n \subset B^4$, respectively. Note that the isotopy rel.~boundary between $S_0$ and $S_n$ induces an isotopy between $S_0^b$ and $S_n^b$. Rather than study $S_n^b$ directly, it will be simpler to consider its pullback $F_n=\tau^{-n}(S_n^b)$ which, by construction, is isotopic to $S_n^b$. Observe that $$F_n=\tau^{-n}(S_n \cup b)=\tau^{-n}(S_n)\cup \tau^{-n}(b)=S_0 \cup \tau^{-n}(b),$$
so $F_n$ is equivalently obtained from $S_0$ by attaching the band $b_n=\tau^{-n}(b)$ from Figure~\ref{fig:infinitefamilybands} (right). In Figure~\ref{fig:infinitefamilycovers}, we redraw $S_0^b$ and $F_n$ and produce Kirby diagrams of the 2-fold branched covers of $B^4$ over $S_0^b$ and $F_n$. We see that $$H_1(\Sigma(S^b_0);\mathbb{Z})=\mathbb{Z}\qquad\text{ while }\qquad H_1(\Sigma(F_n);\mathbb{Z})=\mathbb{Z}_{2|n|}.$$ This contradicts $F_n$ being freely isotopic to $S^b_0$, hence $S_0$ and $S_n$ cannot be isotopic rel.~boundary in $B^4$.
\end{proof}

The next corollary implies Theorem \ref{thm:infiniteunlink}.
\begin{corollary}
For $m\neq n$, the surfaces $S_m$ and $S_n$ are not topologically isotopic rel.\ boundary.
\end{corollary}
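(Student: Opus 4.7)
The plan is to reduce the general case to the previous proposition by exploiting the torus twist $\tau$ as a diffeomorphism of $B^4$. Since $\tau$ is supported near the torus $T$, which lies in the complement of the unlink $\partial S_0$, the extension of $\tau$ used in the previous proof can be arranged to fix $\partial S_0$ (indeed, to be the identity on $\partial B^4$ outside a small neighborhood of $T \subset S^3$). By construction $S_k = \tau^k(S_0)$ for every $k$, so in particular $\tau^{-m}(S_m) = S_0$ and $\tau^{-m}(S_n) = S_{n-m}$.

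Suppose toward a contradiction that $S_m$ and $S_n$ are topologically isotopic rel.\ boundary in $B^4$, realized by an ambient isotopy $h_t \colon B^4 \to B^4$ with $h_0 = \mathrm{id}$, $h_1(S_m) = S_n$, and $h_t$ the identity on a neighborhood of $\partial S_m = \partial S_n$ in $\partial B^4$. Then the conjugate isotopy $g_t = \tau^{-m} \circ h_t \circ \tau^{m}$ satisfies $g_0 = \mathrm{id}$ and
\[
g_1(S_0) \;=\; \tau^{-m} \circ h_1(S_m) \;=\; \tau^{-m}(S_n) \;=\; S_{n-m}.
\]
Moreover, because $\tau^{\pm m}$ fixes the boundary (its support avoids $\partial S_0$), the isotopy $g_t$ is also rel.\ boundary.

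Since $n - m \neq 0$, this produces an isotopy rel.\ boundary in $B^4$ from $S_0$ to $S_{n-m}$, contradicting the previous proposition. I don't expect any serious obstacle here; the only point requiring care is verifying that the extension of $\tau$ to $B^4$ can be chosen to fix $\partial B^4$ pointwise (or at least a neighborhood of $\partial S_0$), which follows immediately from the fact that $\tau$ is supported in a tubular neighborhood of $T$ that is disjoint from $\partial S_0 \subset \partial B^4$.
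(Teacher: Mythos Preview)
Your proof is correct and is essentially the same as the paper's, just spelled out in more detail: the paper simply notes that the homeomorphism $\tau^{-m}$ of $B^4$ carries the pair $(S_m,S_n)$ to $(S_0,S_{n-m})$ and invokes the previous proposition. One small imprecision in your closing remark: the extension of $\tau$ to $B^4$ cannot fix $\partial B^4$ pointwise, since its restriction to $S^3$ is the nontrivial torus twist; only the weaker parenthetical claim (that it fixes a neighborhood of $\partial S_0$) holds, and that is all you need --- indeed, if $h_t|_{\partial B^4}=\id$ then $g_t|_{\partial B^4}=\tau^{-m}\circ\id\circ\tau^{m}=\id$ automatically, so no hypothesis on $\tau|_{\partial B^4}$ is actually required.
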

\begin{proof}
There is a homeomorphism of $B^4$ that maps $S_m$ and $S_n$ to $S_0$ and $S_{n-m}$, respectively. Since $S_0$ and $S_{n-m}$ are not topologically isotopic rel.\ boundary, neither are $S_m$ and $S_n$.
\end{proof}

We now proceed to the proof of Theorem \ref{thm:infinite}. The construction  similarly relies on torus twisting, but we can arrange for the surfaces in question to be connected at the cost of complicating the boundary link. To that end, let $S=S_0$ be the surface in Figure~\ref{fig:dbc}, which is isotopic to the surface $S_0$ from Example~\ref{ex:infinite} (originally shown in the center of Figure~\ref{fig:infinite}). Example~\ref{ex:infinite} also introduced surfaces $S_n$ obtained from $S_0$ by twisting $n$ times about the torus $T$ pictured in Figure~\ref{fig:infinite} (right). Our first step in distinguishing these Seifert surfaces up to smooth isotopy rel.~boundary in $B^4$ will be to show that the double branched cover $M_n=\Sigma_2(B^4,S_n)$ is obtained from $M=\Sigma_2(B^4,S)$ by  surgery along the torus $T'$ depicted in Figure~\ref{fig:dbc} (right).

\begin{proposition}\label{prop:cover}
    The 4-manifold $M$ shown in the left of Figure~\ref{fig:dbcsurgery} is the double branched cover of $B^4$ along $S=S_0$.
\end{proposition}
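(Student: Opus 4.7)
The plan is to apply the algorithm of Akbulut--Kirby~\cite{kirbyakbulut} to construct a handle diagram for the double branched cover $\Sigma_2(B^4, S)$ directly from a ribbon presentation of $S = S_0$, and then simplify via Kirby calculus to match the diagram of $M$ on the left of Figure~\ref{fig:dbcsurgery}.

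First, I would put $S$ into ribbon form. Since $S$ is a pushed-in spanning surface for a 3-component link, it admits a Morse function with respect to radial distance from $S^3$ having only index-0 and index-1 critical points; equivalently, $S$ can be described as a disjoint collection of disks in $S^3$ together with bands that realize the two annuli (as the two sides of a standard torus cut along a $(4,8)$ torus link) and the additional twisted band joining them. An Euler characteristic count $\chi(S) = -1$, combined with the fact that $\partial S$ has three components, pins down the number of disks and bands in such a presentation.

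Next, I would run the Akbulut--Kirby recipe on this presentation. Each ribbon disk beyond the first contributes a dotted 1-handle in the cover; each band lifts to the attaching circle of a 2-handle whose framing is determined by the band's twisting in $S$, with linking numbers between attaching circles read off from ribbon intersections. In particular, the two annuli inside the standard torus contribute 2-handles whose attaching curves are double lifts of the $(4,8)$ torus link components (and these lifts inherit framings coming from the surface framing of the annuli), while the twisted band joining the annuli produces an additional 2-handle whose framing records the prescribed twist count.

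Finally, I would perform a sequence of Kirby moves --- handle slides, 1/2-handle cancellations, and isotopies within the link complement --- to simplify the resulting diagram into the form shown for $M$ in Figure~\ref{fig:dbcsurgery}. The hard part is bookkeeping: tracking framings and mutual linking numbers as the attaching circles are pushed across each other and across the lifts of the 1-handles is delicate, and the twisted band that joins the two annuli is the step most prone to introducing sign or framing errors. After these moves, the resulting diagram should match $M$ on the nose, completing the proof.
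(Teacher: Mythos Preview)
Your approach is the same as the paper's: apply the Akbulut--Kirby algorithm to a disk-and-band presentation of $S$ to obtain a Kirby diagram for $\Sigma_2(B^4,S)$, then simplify by Kirby moves to reach the target diagram. The paper's proof is itself just two sentences pointing to figures that carry out exactly this.

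That said, your description of the algorithm's output is slightly garbled and would trip you up in execution. The 2-handles in the branched cover come from the \emph{bands} in the presentation, not from subsurfaces like ``the two annuli''; their attaching circles are the doubled band cores, not ``double lifts of the $(4,8)$ torus link components.'' In particular, since $S$ is a connected pushed-in Seifert surface with $b_1(S)=2$, the simplest presentation is a single disk with two bands, and the branched cover is a $2$-handlebody with no $1$-handles at all (this is what the paper draws first, with two $-4$-framed $2$-handles). Your proposed presentation with multiple ribbon disks and dotted $1$-handles is not wrong in principle, but it is unnecessarily complicated here and makes the subsequent Kirby calculus harder than it needs to be.
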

\begin{proof}
In Figure \ref{fig:dbc} we obtain a Kirby diagram for the double branched cover of $B^4$ along $S$, using the algorithm of \cite{kirbyakbulut}. In Figure \ref{fig:dbcmoves}, we show how to perform Kirby moves to this diagram to arrive at Figure \ref{fig:dbcsurgery} (left).
\end{proof}

\begin{figure}\center
       \labellist
\small\hair 2pt

\pinlabel \textcolor{MutedRed}{-$4$} at 446 180
\pinlabel \textcolor{MutedBlue}{-$4$} at 275 180

\pinlabel $S=S_0$ at 105 -25
\pinlabel $M=\Sigma_2(B^4,S)$ at 358 -25
\pinlabel ${T' \subset M}$ at 584 -25

\endlabellist

\includegraphics[width=.9\linewidth]{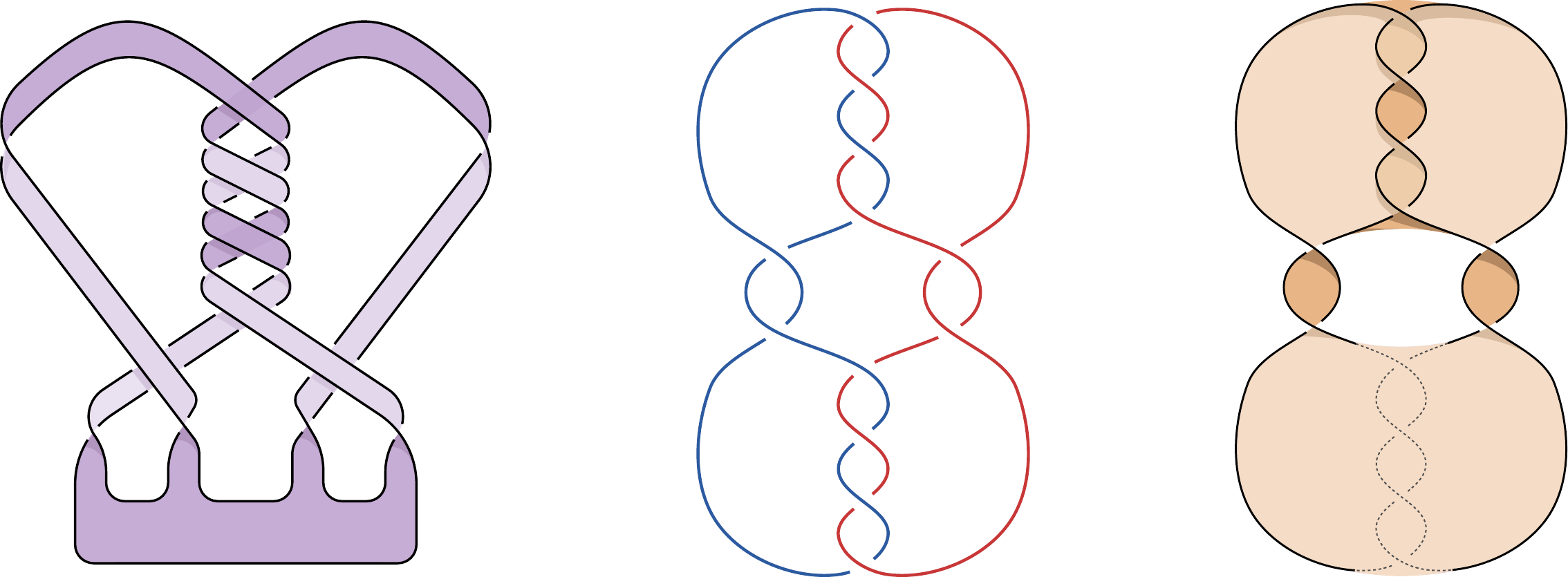}

\vspace{.2in}

    \captionsetup{width=.8\linewidth}
    \caption{
    Left: An alternative diagram for the surface from Example~\ref{ex:infinite}. Center: A Kirby diagram for the 2-fold branched cover of $B^4$ along $S$. 
    Right: A torus $T'$ of square zero, consisting of a genus-1 surface in $S^3$ and the core disks of the two 2-handles in the middle diagram. In Proposition \ref{prop:toruscover}, we show that the lift of $T$ in $S^3\setminus L$ consists of two parallel copies of $T'$.}
    
    \label{fig:dbc}
\end{figure}

\begin{figure}
\center
    
\labellist
\small\hair 2pt

\pinlabel \textcolor{MutedRed}{-$4$} at 262 200
\pinlabel \textcolor{MutedBlue}{$0$} at 10 110
\pinlabel \textcolor{gray}{-$2$} at 49 177
\pinlabel \textcolor{gray}{-$2$} at 245 15
\pinlabel $M=M_{(0,0,1)}$ at 120 -35

\pinlabel \textcolor{MutedRed}{-$4$} at 640 200
\pinlabel \textcolor{MutedBlue}{$0$} at 363 34
\pinlabel \textcolor{gray}{-$2$} at 425 177
\pinlabel \textcolor{gray}{-$2$} at 623 15
\pinlabel $M_{(0,1,0)}$ at 492 -35

\pinlabel \textcolor{MutedBlue}{$0$} at 755 120
\pinlabel {$T^2 \times D^2$} at 854 -35
\pinlabel \textcolor{gray}{$\alpha$} at 872 200
\pinlabel \textcolor{gray}{$\gamma$} at 947 100
\pinlabel \textcolor{gray}{$\beta$} at 982 50
\endlabellist

\includegraphics[width=\linewidth]{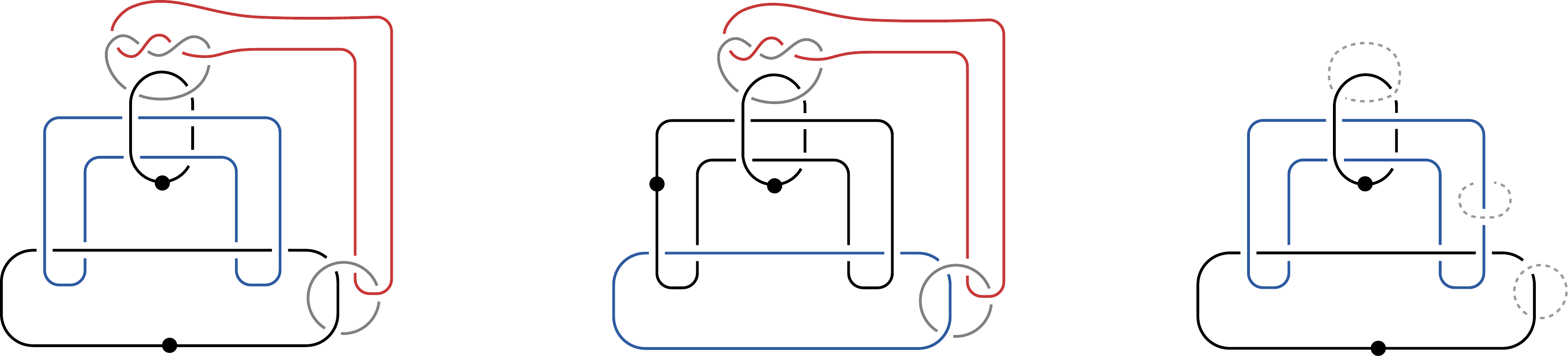}
   \vspace{.1in}

    \caption{Left: a 4-manifold $M$. Right: A copy of $T^2\times D^2$ contained in $M$, with curves $\alpha,\beta,\gamma$ in $\partial T^2\times D^2$ indicated. The torus $T^2\times 1$ is equivalent $T'$ from Figure \ref{fig:dbc}. Here, $\gamma=\pt\times\partial D^2$, so surgering $M$ along $T^2\times 0$ gluing a meridian disk to $(0,0,1)$ in $(\alpha,\beta,\gamma)$ coordinates is the trivial surgery (and hence $M$ can be identified with $M_{(0,0,1)}$. Middle:  A diagram of $M_{(0,1,0)}$.}
    
    \label{fig:dbcsurgery}
\end{figure}

\begin{figure}
\center
\labellist\footnotesize
\pinlabel{\textcolor{MutedRed}{$-4$}} at 5 100
\pinlabel{\textcolor{MutedBlue}{$-4$}} at -4 129
\pinlabel{\textcolor{MutedRed}{$-4$}} at 112 93
\pinlabel{\textcolor{MutedBlue}{$0$}} at 106 130
\pinlabel{\textcolor{MutedRed}{$-4$}} at 279 90
\pinlabel{\textcolor{MutedBlue}{$0$}} at 225 131
\pinlabel{\textcolor{MutedRed}{$-4$}} at 133 10
\pinlabel{\textcolor{MutedBlue}{$0$}} at 73 60
\pinlabel{\textcolor{MutedRed}{$-4$}} at 239 10
\pinlabel{\textcolor{MutedBlue}{$0$}} at 172 60
\pinlabel{\textcolor{gray}{$-2$}} at 211 3
\pinlabel{\textcolor{gray}{$-2$}} at 207 59
\pinlabel{$-2$} at 85 7 
\pinlabel{$-2$} at 98 40
\endlabellist
    \includegraphics[width=.9\linewidth]{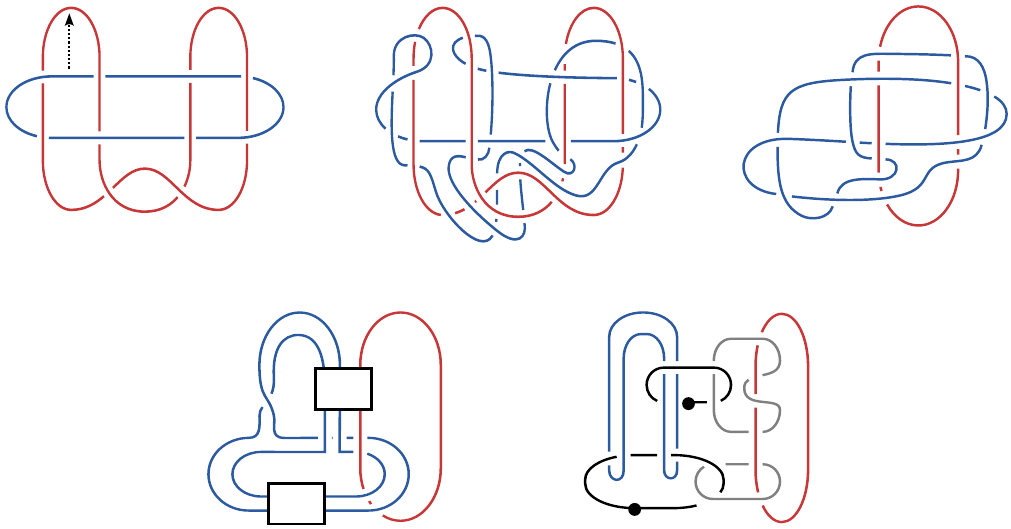}
        \captionsetup{width=.875\linewidth}
    \caption{Beginning with the diagram of $M$ on the top left (obtained from Figure~\ref{fig:dbc} by isotopy), we perform Kirby moves until arriving at the alternative diagram of $M$ on the bottom right, which is isotopic to the diagram of $M$ in Figure~\ref{fig:dbcsurgery}.}
    
    \label{fig:dbcmoves}
\end{figure}


Following the above Kirby moves also shows that the torus $T'$ depicted in Figure~\ref{fig:dbc} is isotopic to the obvious torus $T^2 \times 0$ in the copy of $T^2 \times D^2 \subset M$ depicted in Figure~\ref{fig:dbcsurgery}. The second manifold $M_{(0,1,0)}$  in Figure \ref{fig:dbc} is obtained by surgery on this torus $T'$,  and the surgery is of type $(0,1,0)$ relative to the basis shown.

\begin{proposition}\label{prop:toruscover}
    The torus $T\subset S^3\setminus L$ of Figure \ref{fig:infinite} lifts to two copies of obvious torus $T'$ inside $M$ (after being pushed into the interior of $M$).
\end{proposition}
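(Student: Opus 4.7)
The plan has two parts: first, show that the preimage of $T$ in $M$ consists of two disjoint tori; second, identify each with a parallel copy of $T'$. For the first part, push $T$ slightly off $S$ in the normal direction to $S^3$ in $B^4$. Since $T \cap S$ is a transverse intersection along two simple closed curves in $S^3$, this perturbation separates $T$ from $S$, giving a torus $\widehat T \subset B^4 \setminus S$. Because $\widehat T$ avoids the branch locus, its preimage in $M$ is an unbranched $2$-fold cover of $T$, which is either a single torus (if connected) or two disjoint copies of $T$ (if trivial). The case is determined by the composition $\pi_1(T) \to \pi_1(B^4 \setminus S) \to \mathbb{Z}_2$, and I would reduce it to a mod-$2$ linking calculation: since $H_1(B^4 \setminus S; \mathbb{Z}) \cong \mathbb{Z}$ is generated by the meridian of $S$, and the map from $H_1(S^3 \setminus L)$ sends each meridian of a component of $L$ to this generator, a loop $\gamma \subset S^3 \setminus L$ is sent to $\sum_i \lk(\gamma, L_i) \bmod 2$.

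Taking a basis $\{\alpha, \beta\}$ of $H_1(T)$ with $\alpha$ parallel to (and disjoint from) the two components of $T \cap S$, and $\beta$ transverse to $T \cap S$ meeting each component in one point, I observe that $\alpha$ lies in $T \setminus (T \cap S)$ and so is disjoint from $S$ in $S^3$, giving $\lk(\alpha, L) = \alpha \cdot S = 0$, while $\beta$ meets $S$ in exactly two points, so $\lk(\beta, L) = \beta \cdot S \equiv 0 \pmod 2$. Both generators therefore map to $0 \in \mathbb{Z}_2$, so the cover is disconnected, and the preimage of $\widehat T$ is two disjoint tori projecting homeomorphically onto $T$. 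Moreover, pulling back a product neighborhood $T \times (-\epsilon, \epsilon) \subset B^4 \setminus S$ of $\widehat T$ yields two disjoint product neighborhoods in $M$, so the two lifted tori are smoothly isotopic.

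For the second part, I would identify each of these lifted tori with $T'$ by tracking the position of $T$ through the Akbulut--Kirby procedure that produced the Kirby diagram of $M$ in Figure~\ref{fig:dbc} (middle). In that construction, the two curves of $T \cap S$ lie in neighborhoods of the two bands of $S$ whose lifts become the two $-4$-framed $2$-handles; the rest of $T$ sits in the complement of $S$ and so lifts to a genus-$1$ surface in the $S^3$ bounding the $0$-handle of $M$, whose two boundary circles are the attaching circles of these $2$-handles and are capped off by their core disks, exactly reproducing $T'$ as described in the caption of Figure~\ref{fig:dbc}. The main obstacle is precisely this concrete identification, as the first-part argument establishes only that the preimage is two disjoint tori of the correct topological type; a cleaner alternative would be a uniqueness argument in the distinguished $T^2 \times D^2$ region of Figure~\ref{fig:dbcsurgery}, forcing any pair of disjoint, homologous, square-zero tori in that region to be parallel copies of $T'$.
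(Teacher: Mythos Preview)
Your first part---showing that the lift of $T$ is a pair of parallel tori---is correct, though it differs from the paper's route. The paper observes that $T$ bounds a solid torus $V \subset S^3$ containing two components of $L$ as cores, so $V$ lifts to a copy of $T^2 \times I$ in $\partial M$, whose boundary is the desired pair of parallel tori. Your mod-$2$ linking calculation reaches the same conclusion and is perfectly valid.

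The genuine gap is in your second part, and you have correctly identified it. Tracking $T$ explicitly through the Akbulut--Kirby algorithm is delicate enough that the paper itself declines to do it, and your alternative uniqueness statement for square-zero tori in the $T^2 \times D^2$ region is not a standard fact and would require its own argument. The paper sidesteps both difficulties by \emph{reversing direction}: instead of lifting $T$ and trying to recognize the result as $T'$, it projects $T'$ down. The branched covering involution on $\partial M$ is visible in the Kirby diagram of Figure~\ref{fig:dbc} as rotation about a horizontal axis, and this rotation exchanges the two normal pushoffs of $T'$; hence they project to a single embedded torus $\hat T \subset S^3 \setminus L$. Since $T'$ is nonseparating in $\partial M$, the torus $\hat T$ has components of $L$ on both sides and so is not boundary-parallel; since $L$ is non-split, $\hat T$ is essential. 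One then checks that $S^3 \setminus L$ contains exactly two essential non-boundary-parallel tori up to isotopy: the torus $T$, and a second torus enclosing a single component of $L$ as a twisted Whitehead pattern. The latter cannot be $\hat T$, because the double branched cover of a solid torus over the Whitehead pattern knot is not $T^2 \times I$, so its boundary does not lift to parallel tori. This forces $\hat T$ to be isotopic to $T$, completing the identification.
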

\begin{proof}
In $S^3$, the torus $T$ bounds a solid torus $V$ intersecting $L$ in two circles that are both cores of the solid torus. Then $V$ lifts to a copy of $T^2\times I$ in $\partial M$, so we conclude that the lift of $T$ to $M$ consists of two disjoint, parallel tori in $\partial M$. 

To identify these tori upstairs, one could  perform this lift explicitly and obtain parallel copies of the torus illustrated in the right of Figure~\ref{fig:dbc}, which is the torus visible in Figure~\ref{fig:dbcsurgery}. Instead, we will appeal to an indirect argument, as follows. 

In the diagram of $M$ given in Figure~\ref{fig:dbc}, the branched covering involution of $\partial M$ (whose quotient is $S^3$) corresponds to rotation about a horizontal axis. By considering the diagram of $T'$ in Figure \ref{fig:dbc} (right), we see that positive and negative pushoffs of $T'$ can be taken disjoint from the branch set and are exchanged by the covering involution. We conclude that they project to a single embedded torus $\hat{T}$ in $S^3\setminus L$. Since $T'$ is nonseparating in $\partial M$, there must be components of $L$ on each side of $\hat{T}$, and $\hat{T}$ cannot be boundary-parallel in $S^3\setminus\nu(L)$. Since $L$ is non-split, then $\hat{T}$ is essential. 

From Figure~\ref{fig:infinite}, it is straightforward to see that $S^3 \setminus L$ contains exactly two essential tori that are not boundary parallel (up to isotopy). One is the torus $T$ that encloses two components of $L$, and the second torus encloses a single component of $L$ which sits as a twisted Whitehead pattern knot inside in enclosed solid torus. The double branched cover of the solid torus over the Whitehead pattern knot is not $T^2 \times I$, so this second torus in $S^3 \setminus L$ does not lift to parallel copies of $T'$. We conclude that $\hat T$ is isotopic to $T$, as claimed.
\end{proof}

\begin{proposition}\label{prop:xpcover}
    The double branched cover $M_n$ of $B^4$ along $S_n$ is the 4-manifold $M_{(0,2n,1)}$ obtained from $M$ by $(0,2n,1)$-surgery along $T'$.
\end{proposition}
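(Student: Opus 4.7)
Proof plan:

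The plan is to realize the passage from $(B^4, S_0)$ to $(B^4, S_n)$ as a cut-and-reglue operation along a tubular neighborhood of the (pushed-in) torus $T$, and then lift this operation to the double branched cover $M$, where it manifests as a torus surgery on $T'$. After pushing both $T$ and $S_0$ slightly into the interior of $B^4$ so that $T$ and $S_0$ become disjoint in the interior, I would choose a tubular neighborhood $W \cong T^2 \times D^2$ of $T$. The meridional twist $\tau^n$ (extended radially from a collar of $T$ in $S^3$) then becomes a diffeomorphism of $W$ that fixes $\partial W$ pointwise and realizes $n$ Dehn twists along the meridian $\mu$ of the solid torus $V \subset S^3$ bounded by $T$. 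Because $\tau^n$ takes $S_0$ to $S_n$ and is the identity outside $W$, the pair $(B^4, S_n)$ is obtained from $(B^4, S_0)$ by cutting out $W$ and regluing it via $\tau^n$ along $\partial W$.

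Lifting to the double branched cover, Proposition~\ref{prop:toruscover} identifies the preimage of $T$ in $M$ with two parallel copies $T_0, T_1$ of $T'$, cobounding a $T^2 \times I$; consequently the preimage $\widetilde{W}$ of $W$ is a single $T^2 \times D^2$ neighborhood of $T'$ containing both $T_0$ and $T_1$. The manifolds $M_0 = M$ and $M_n$ agree canonically outside $\widetilde{W}$, so the transition between them is encoded entirely in how $\widetilde{W}$ is reglued along $\partial \widetilde{W}$. Since the meridian $\mu$ has $\lk(\mu,L) = 2 \equiv 0 \pmod{2}$ (the two components of $L$ inside $V$ each link $\mu$ once), the curve $\mu$ lifts to two disjoint loops in $\partial M$, one on each $T_i$, rather than a single doubly-wrapping loop. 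The lifted diffeomorphism $\widetilde{\tau}^n|_{\partial \widetilde{W}}$ therefore restricts to an $n$-fold Dehn twist on each $T_i$ in the direction $\beta$ of Figure~\ref{fig:dbcsurgery}.

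To convert these boundary twists into a single interior torus surgery on $T'$, I would apply the standard fact that $(0, p, 1)$-surgery on $T_0$ together with $(0, q, 1)$-surgery on $T_1$ is equivalent to a single $(0, p+q, 1)$-surgery on any torus parallel to both inside the cobounding $T^2 \times I$. Performing $(0, n, 1)$-surgery on each $T_i$ (to realize the two boundary Dehn twists) thus produces $(0, 2n, 1)$-surgery on $T'$, yielding $M_n = M_{(0, 2n, 1)}$ as desired. The main obstacle is verifying that the basis element $\beta$ of Figure~\ref{fig:dbcsurgery} corresponds to the lift of $\mu$ (rather than the lift of some other slope on $T$) and confirming the combination rule for parallel torus surgeries; the former can be checked by tracing the basis $(\alpha, \beta, \gamma)$ backward through the Kirby moves of Figures~\ref{fig:dbc} and~\ref{fig:dbcmoves} and identifying the image of a meridian disk of $V$ in the branched cover, while the latter follows from direct handle-slide calculations in the log transform picture.
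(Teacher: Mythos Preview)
Your overall strategy matches the paper's: lift the torus twist to the branched cover, recognize it as a pair of $(0,n,1)$ surgeries on the two parallel lifts of $T$, and then combine these into a single $(0,2n,1)$ surgery on $T'$. The paper carries out the last step via an explicit Kirby-calculus computation (Figure~\ref{fig:transformsparalleltori}, adapted from \cite{akbulut:multiple}), which is what you allude to as the ``standard fact.''

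However, your setup in the first two paragraphs is internally inconsistent. You arrange $T$ and $\hat S_0$ to be disjoint in $B^4$ and take $W=\nu(T)$ disjoint from $\hat S_0$, yet you also claim that a diffeomorphism supported in $W$ carries $\hat S_0$ to $\hat S_n$. These cannot both hold: any diffeomorphism supported away from $\hat S_0$ fixes $\hat S_0$ setwise. Relatedly, if $W$ is genuinely disjoint from the branch locus, then its preimage $p^{-1}(W)$ is an \emph{honest} double cover and hence consists of two disjoint copies of $T^2\times D^2$, not a single one. The fact that $T_0$ and $T_1$ cobound a $T^2\times I$ in $\partial M$ (this product region is the lift of the solid torus $V$, not of $W$) does not make $p^{-1}(W)$ connected.

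The clean fix, which is what the paper does implicitly, is to leave $T$ in $S^3=\partial B^4$ and treat $\tau^n$ as a diffeomorphism of the boundary pair $(S^3,L)$. Its lift to $\partial M=\Sigma_2(S^3,L)$ is supported in $p^{-1}(N(T))$, which is two disjoint collars, one about each $T_i'$; on each it is an $n$-fold Dehn twist in the $\beta$-direction. Changing the boundary identification of $M$ by this lifted twist is precisely an $(0,n,1)$ surgery on a pushed-in copy of each $T_i'$, and your combination rule then gives $M_{(0,2n,1)}$. Once you replace your ``disjoint $W$'' picture with this boundary-regluing picture, the rest of your outline goes through and coincides with the paper's argument.
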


\begin{proof}
Recall that $S_n$ is obtained from $S_0$ by twisting $n$ times about $T$. By Proposition \ref{prop:toruscover}, the lift of $T$ to $M$ consists of two parallel copies of $T'$. Then $M_n$ is obtained from $M$ by performing degree-1 surgeries on each of two parallel copies of $T'$ in the direction of the twist. In our chosen coordinates, the direction is $n\beta$, so $M_n$ is obtained from $M$ by performing $(0,n,1)$ surgeries along two copies of $T'$.

\begin{figure}
    \centering
    \labellist
    \footnotesize
    \pinlabel{\textcolor{gray}{$\alpha$}} at 67 553
    \pinlabel{\textcolor{gray}{$\beta$}} at 145 520
    \pinlabel{\textcolor{gray}{$\gamma$}} at 130 560
    \pinlabel{\textcolor{MutedBlue}{$0$}} at 45 585
     \pinlabel{\textcolor{MutedBlue}{$0$}} at 250 585
     \pinlabel{\textcolor{MutedBlue}{$0$}} at 445 585
     \pinlabel{\textcolor{MutedRed}{$0$}} at 345 577
     \pinlabel{\textcolor{MutedRed}{$0$}} at 345 499
\pinlabel{\textcolor{black}{$\cup \text{ 3-handle}$}} at 445 502
     
        \pinlabel{\textcolor{MutedBlue}{$0$}} at 12 460
     \pinlabel{\textcolor{MutedBlue}{$0$}} at 201 310
     \pinlabel{\textcolor{MutedRed}{$0$}} at 112 480
     \pinlabel{\textcolor{MutedRed}{$0$}} at 112 374
     \pinlabel{$n$} at 79.5 420
     \pinlabel{$n$} at 142.5 420
     \pinlabel{\textcolor{MutedBlue}{$0$}} at 270 460
     \pinlabel{\textcolor{MutedBlue}{$0$}} at 465 460
     \pinlabel{$n$} at 338.25 420
     \pinlabel{$n$} at 401.25 420
        \pinlabel{\textcolor{MutedBlue}{$0$}} at 12 327
     \pinlabel{\textcolor{MutedBlue}{$0$}} at 207 462
     \pinlabel{$n$} at 79.5 287
     \pinlabel{$n$} at 142.5 287
     \pinlabel{\textcolor{MutedBlue}{$0$}} at 267 327
     \pinlabel{\textcolor{MutedBlue}{$0$}} at 457 328
     \pinlabel{$n$} at 338.5 287
     \pinlabel{$n$} at 401.5 287
      \pinlabel{\textcolor{MutedBlue}{$0$}} at 22 182
     \pinlabel{\textcolor{MutedBlue}{$0$}} at 209 196
     \pinlabel{$n$} at 79.5 155
     \pinlabel{$n$} at 142.5 155
     \pinlabel{\textcolor{MutedBlue}{$0$}} at 309 137
     \pinlabel{\textcolor{MutedBlue}{$0$}} at 457 196
     \pinlabel{$n$} at 327.5 155
     \pinlabel{$n$} at 390.5 155

      \pinlabel{\textcolor{MutedBlue}{$0$}} at 57 75
     \pinlabel{\textcolor{MutedBlue}{$0$}} at 190 75
     \pinlabel{$n$} at 59 35
     \pinlabel{$n$} at 122 35
     \pinlabel{\textcolor{MutedBlue}{$0$}} at 319 75
     \pinlabel{$2n$} at 338.5 35
    \endlabellist
    \includegraphics[width=\textwidth]{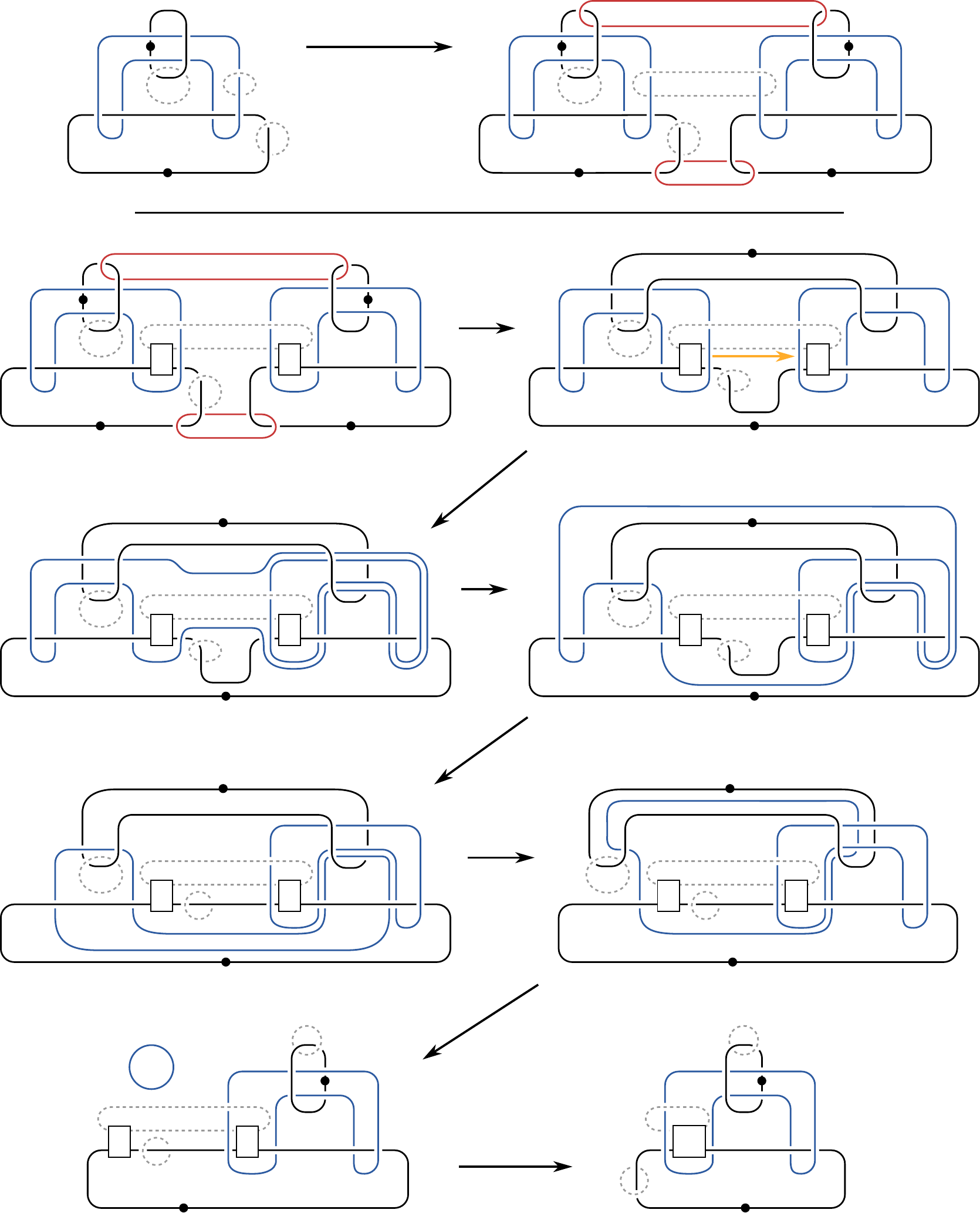}
    \captionsetup{width=.9\linewidth}
    \caption{Top left: a diagram of $T^2\times D^2$. Top right: we perform Kirby moves so as to make two copies of $T^2\times\{\pt\}$ easily visible. Second row and down, following arrows: we perform $(0,n,1)$ surgery on two parallel copies of $T^2\times\{\pt\}$ and then perform Kirby moves to show that the result is diffeomorphic to the result of performing $(0,2n,1)$ surgery along $T^2\times 0$. We note that the final step consists of a 2-/3-handle cancellation.}
    \label{fig:transformsparalleltori}
\end{figure}

Finally, in Figure \ref{fig:transformsparalleltori}, we adapt the construction from \cite{akbulut:multiple} to show that performing $(0,n,1)$-surgeries on two parallel copies of $T'$ yields the same manifold up to diffeomorphism rel.\ boundary as performing one $(0,2n,1)$-surgery along $T'$.
\end{proof}

With this topological setup in place, we now distinguish the  surfaces' branched covers.

\begin{proposition}\label{prop:sw}
    The 4-manifolds $M_{(0,2n,1)}$ are all distinct up to diffeomorphism rel.\ boundary.
\end{proposition}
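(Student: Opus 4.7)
The plan is to distinguish the $M_{(0,2n,1)}$ using \emph{relative} Seiberg--Witten invariants, which are preserved by diffeomorphisms rel.\ boundary. Because all of the $M_{(0,2n,1)}$ share the common boundary $\partial M = \Sigma_2(S^3,L)$, their relative Seiberg--Witten invariants lie in a common module and can be compared across $n$. By Proposition~\ref{prop:xpcover}, the family $\{M_{(0,2n,1)}\}$ arises as a one-parameter family of torus surgeries on $M$ along the essential square-zero torus $T'\subset M$, which is exactly the setup to which the Fintushel--Stern / Morgan--Mrowka--Szab\'{o} torus surgery formulas apply.

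The approach is to pass to a closed $4$-manifold by choosing a fixed cap: glue a fixed $4$-manifold $W$ along $\partial M$ to obtain closed manifolds $X_n := M_{(0,2n,1)}\cup_\partial W$. Any rel-boundary diffeomorphism $M_{(0,2m,1)} \to M_{(0,2n,1)}$ extends by the identity on $W$ to a diffeomorphism $X_m \to X_n$, so it suffices to distinguish the $X_n$. The $X_n$ are obtained from $X_0$ by $(0,2n,1)$-surgery along the same square-zero torus $T' \subset X_0$, so the surgery formula expresses $\mathrm{SW}(X_n)$ as the product of $\mathrm{SW}(X_0)$ with a polynomial in $t = \exp([T'])$ whose degree and coefficients depend explicitly on $n$. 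Consequently, the set and multiplicities of basic classes vary with $n$, so $X_m \not\cong X_n$ smoothly whenever $m\neq n$, provided both $\mathrm{SW}(X_0) \neq 0$ and $[T']$ has infinite order in $H^2(X_0;\Z)$.

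The main obstacle is arranging these two non-vanishing hypotheses for some choice of cap $W$. The natural strategy is to construct a symplectic filling of $\partial M$ under which $T'$ sits as a symplectic square-zero torus of nontrivial homology in a closed symplectic manifold $X_0$; Taubes' theorem then yields non-vanishing $\mathrm{SW}(X_0)$ together with a canonical class that pairs nontrivially with $[T']$, ensuring that the polynomial factor introduced by each surgery is nontrivial and strictly records $n$. Alternatively, one can work intrinsically with $M$ using the monopole Floer / relative Seiberg--Witten framework of Kronheimer--Mrowka and apply the surgery formula directly to the relative invariant; in either formulation, pinning down the non-vanishing of the reference invariant is the key technical point, after which the explicit $n$-dependence of the surgery formula completes the proof that the family $\{M_{(0,2n,1)}\}$ is pairwise non-diffeomorphic rel.\ boundary.
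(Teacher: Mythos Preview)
Your high-level strategy---cap off $M$ to a closed $4$-manifold and apply a torus-surgery formula---matches the paper's. But the proposal stops exactly where the real work is: you explicitly flag ``arranging these two non-vanishing hypotheses'' as the main obstacle and do not resolve it. Two points are worth correcting.

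First, the surgery formula you invoke is not of the Fintushel--Stern product type $\sw(X_n)=\sw(X_0)\cdot p_n(t)$. The $(0,2n,1)$-surgeries here are governed by the Morgan--Mrowka--Szab\'o formula, which is \emph{additive} in the two reference surgeries $(0,0,1)$ and $(0,1,0)$: schematically $\sum_i \sw_{X_n}(\kappa_n+i[T]) = 1\cdot\sum_i \sw_{X_{(0,0,1)}} + 2n\cdot\sum_i \sw_{X_{(0,1,0)}}$. So the relevant nonvanishing is for the $(0,1,0)$-surgered manifold, not for $X_0$.

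Second, the paper's construction of the cap exploits this: rather than trying to make $X_0=M\cup Q$ symplectic, it attaches a $(-1)$-framed $2$-handle to $M_{(0,1,0)}$ along $\gamma$, checks that the result is Stein, and uses Lisca--Mati\'c to embed it in a minimal K\"ahler surface $Z$ with $b_2^+>1$ and $K_Z^2>0$; the cap $Q$ is the complement of $M_{(0,1,0)}$ in $Z$. With this cap, $Z_0=M\cup Q$ actually has \emph{vanishing} Seiberg--Witten invariants (the $(-1)$-disk together with the surgery torus gives a square-$+1$ torus violating adjunction), while $Z=M_{(0,1,0)}\cup Q$ has basic classes $\pm K_Z$ with $|\sw_Z(K_Z)|=1$. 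The MMS formula then gives $\sum_i \sw_{Z_n}(\pm\kappa_n+i[T])=\pm 2n$, distinguishing the $Z_n$ and hence the $M_{(0,2n,1)}$ rel boundary. Your suggestion to instead make $X_0$ itself symplectic with $T'$ symplectic is plausible in spirit but is not carried out, and the paper's route sidesteps it entirely.
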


\begin{proof} We will construct a 4-dimensional cap $Q$ with $\partial Q = -\partial M$ and distinguish the closed 4-manifolds $Z_n = M_{(0,2n,1)} \cup Q$. To construct the cap, we begin by viewing $\partial M$ as $\partial M_{(0,1,0)}$ (Figure~\ref{fig:dbcsurgery}, middle). Attach a $(-1)$-framed 2-handle to $M_{(0,1,0)}$ along $\gamma$. As demonstrated in Figure~\ref{fig:infinite_handles_3}, the resulting 4-manifold admits the structure of a Stein domain \cite{gompf:stein}. By work of Lisca-Mati\'c \cite{lisca-matic:embed}, it embeds into a closed, minimal K\"ahler surface $Z$ that can be chosen to satisfy $b_2^+ >1$ and  $K_Z \cdot K_Z >0$ for the canonical class $K_Z=-c_1(Z)\in H^2(Z;\Z)$. Let $Q$ be the 4-manifold $Z \setminus \mathring{M}_{(0,1,0)}$.

\begin{figure}
       \labellist
\small\hair 2pt
\pinlabel ${M_{(0,1,0)} \cup_\gamma \text{\footnotesize 2-handle}}$ at 125 -30
\pinlabel {\large $=$} at 300 100
\pinlabel {\large $=$}  at  640 100

\endlabellist
    \includegraphics[width=\linewidth]{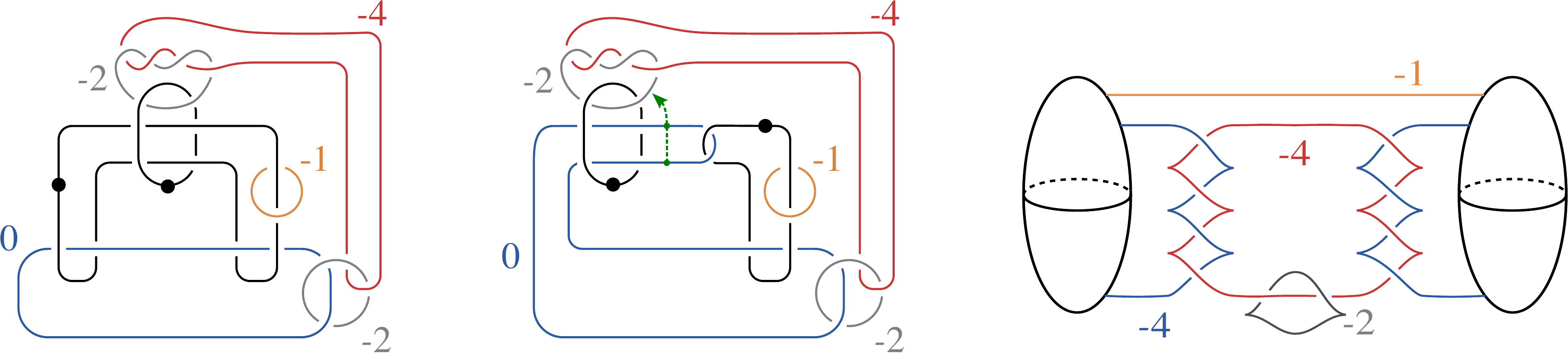}
    
    \bigskip
    \medskip
    \captionsetup{width=.875\linewidth}
    \caption{On the left, a handle diagram for the 4-manifold obtained by attaching a $(-1)$-framed 2-handle to $\gamma \subset \partial M_{(0,1,0)}$. The second diagram is obtained by isotopy. After performing the two handleslides indicated in the second diagram and canceling a 1-/2-handle pair, we obtain the (Stein) handle diagram on the right.}
    
    \label{fig:infinite_handles_3}
\end{figure}

Let $Z_0$ denote the 4-manifold $M \cup Q$, and extend this to an infinite family $Z_n = M_{(0,2n,1)} \cup Q$.  By construction, $Z_n$ is obtained from $Z_0$ by torus surgery of type $(0,2n,1)$, and $Z$ is obtained from $Z_0$ by surgery of type $(0,1,0)$. It follows from the Morgan-Mrowka-Szab\'o formula \cite[Theorem~1.1]{mms:torus} that the Seiberg-Witten invariants of $Z_n$ can be computed in terms of the invariants of $Z$, the $(0,1,0)$-surgery, and $Z_0$, the $(0,0,1)$-surgery. 

We collect some observations that simplify this calculation. First, we claim that $Z_0$ has vanishing Seiberg-Witten invariants. Recall that the construction of $Q$ involved attaching a $(-1)$-framed 2-handle to $\partial M$ along the curve $\gamma$. Since $\gamma$ bounds a smoothly embedded disk in $M$, it follows that $Z_0=M\cup Q$ contains a smoothly embedded 2-sphere of square $-1$. Moreover, since $\gamma$ is a meridian to the 0-framed 2-handle in $M$, this 2-sphere intersects the surgery torus $T^2 \times \{0\} \subset M$ transversely in a single point. It follows that the sum of these homology classes can be represented by a torus of square $+1$. This violates the adjunction inequality, so there cannot exist any Seiberg-Witten basic classes on $Z_0$. Turning to the K\"ahler surface $Z$,  note that the only Seiberg-Witten basic classes  are $\pm K_Z$ and that  $|\sw_Z(K_Z)|=1$ \cite{morgan:sw}.

To calculate the invariants of $Z_n$, let $\kappa \in H_2(Z;\Z)$ denote  the Poincar\'e dual of $K_Z$, and let $\kappa_n$ denote any class in $H_2(Z_n;\Z)$ such that the  $\operatorname{Spin}^c$ structures on $Z_n$ and $Z$ corresponding to $\kappa_n$ and $\kappa$, respectively, coincide where these 4-manifolds agree (i.e., away from the torus surgery neighborhood $T^2 \times D^2$). Let us also use $[T]$ in each of $H_2(Z_n;\Z)$ and $H_2(Z;\Z)$ to denote the dual torus \emph{after} surgery (i.e., the class of the core torus $T^2 \times \{0\}$ after regluing $T^2 \times D^2$ to perform the  torus surgery). With this notation in place,  the preceding observations combine with \cite[Theorem~1.1]{mms:torus} to imply that the only Seiberg-Witten basic classes on $Z_n$ are those of the form $\pm \kappa_n + i [T]$ for $ i \in \Z$, and that the sums of the Seiberg-Witten invariants of these classes satisfy
\begin{align*}
\sum_{i \in \Z} \sw_{Z_n}(\kappa_n + i [T]) &= 2n \sum_{i \in \Z} \sw_{Z}(\kappa+i [T])=2n \big( \sw_Z(\kappa) + 0 \big) = \pm 2n
\\
\sum_{i \in \Z} \sw_{Z_n}(-\kappa_n + i [T]) &= 2n \sum_{i \in \Z} \sw_{Z}(-\kappa+i [T])=2n \big( \sw_Z(-\kappa) + 0 \big) = \pm 2n,
\end{align*}
where the sums in the middle of each equation reduce to a single term because $\pm\kappa$ are the unique basic classes of $Z$ and must be linearly independent from $i[T]$ for $i \neq 0$ because $\kappa\cdot \kappa = K_Z \cdot K_Z >0$ but $i[T] \cdot i [T] =0$.

It follows that $Z_n$ is distinguished from $Z_m$ by its Seiberg-Witten invariants whenever $n \neq m$. Since $Z_n=M_{(0,2n,1)} \cup Q$ and $Z_m = M_{(0,2m,1)} \cup Q$, we conclude that $M_{(0,2n,1)}$ and $M_{(0,2m,1)}$ are distinct up to diffeomorphism rel boundary.
\end{proof}

\begin{proof}[Proof of Theorem \ref{thm:infinite}]
By Proposition \ref{prop:xpcover}, the 4-manifold $M_{(0,2n,1)}$ is the  branched cover of $B^4$ along $S_n$. Moreover, for all $m$, there is a natural identification of $\partial M_{(0,2n,1)}$ with $\partial M_{(0,2m,1)}$ arising from their identification with $\partial M = \Sigma_2(S^3,L)$. By Proposition~\ref{prop:sw}, these 4-manifolds are distinct up to diffeomorphism rel.\ boundary whenever $m \neq n$. It follows that $S_n$ and $S_m$  are not smoothly isotopic rel.\ boundary in $B^4$.
\end{proof}


\bibliographystyle{alpha} 
\bibliography{biblio}

\newcommand{\etalchar}[1]{$^{#1}$}
\begin{thebibliography}{MWW22}

\bibitem[AK80]{kirbyakbulut}
Selman Akbulut and Robion Kirby.
\newblock Branched covers of surfaces in {$4$}-manifolds.
\newblock {\em Math. Ann.}, 252(2):111--131, 1979/80.

\bibitem[Akb13]{akbulut:multiple}
Selman Akbulut.
\newblock Topology of multiple log transforms of 4-manifolds.
\newblock {\em Internat. J. Math.}, 24(7):1350052, 14, 2013.

\bibitem[Akb16]{akbulutbook}
Selman Akbulut.
\newblock {\em 4-manifolds}, volume~25 of {\em Oxford Graduate Texts in
  Mathematics}.
\newblock Oxford University Press, Oxford, 2016.

\bibitem[Alf70]{alford}
William~R. Alford.
\newblock Complements of minimal spanning surfaces of knots are not unique.
\newblock {\em Ann. of Math. (2)}, 91:419--424, 1970.

\bibitem[Alt12]{altman}
Irida Altman.
\newblock Sutured {F}loer homology distinguishes between {S}eifert surfaces.
\newblock {\em Topology Appl.}, 159(14):3143--3155, 2012.

\bibitem[Art26]{artin}
Emil Artin.
\newblock Zur isotopie zweidimensionalen fl\"achen im {R}$^4$.
\newblock {\em Abh. Math. Sem.}, pages 174--177, 1926.

\bibitem[AS70]{schaufele}
William~R. Alford and Christopher~B. Schaufele.
\newblock Complements of minimal spanning surfaces of knots are not unique.
  {II}.
\newblock In {\em Topology of {M}anifolds ({P}roc. {I}nst., {U}niv. of
  {G}eorgia, {A}thens, {G}a., 1969)}, pages pp 87--96. Markham, Chicago, Ill.,
  1970.

\bibitem[BN05]{barnatan}
Dror Bar-Natan.
\newblock Khovanov's homology for tangles and cobordisms.
\newblock {\em Geom. Topol.}, 9:1443--1499, 2005.

\bibitem[CDGW]{snappy}
Marc Culler, Nathan~M. Dunfield, Matthias Goerner, and Jeffrey~R. Weeks.
\newblock Snap{P}y, a computer program for studying the geometry and topology
  of $3$-manifolds.
\newblock \url{http://snappy.computop.org}.

\bibitem[CP20]{conway-powell}
Anthony Conway and Mark Powell.
\newblock Embedded surfaces with infinite cyclic knot group.
\newblock {\em Geom.\ Topol. (to appear)}, 2020.

\bibitem[Dai73]{daigle}
Roy~J. Daigle.
\newblock More on complements of minimal spanning surfaces.
\newblock {\em Rocky Mountain J. Math.}, 3:473--482, 1973.

\bibitem[Eis77]{eisner}
Julian~R. Eisner.
\newblock Knots with infinitely many minimal spanning surfaces.
\newblock {\em Trans. Amer. Math. Soc.}, 229:329--349, 1977.

\bibitem[FS97]{fintushel-stern:surfaces}
Ronald Fintushel and Ronald~J. Stern.
\newblock Surfaces in 4-manifolds.
\newblock {\em Math. Res. Lett.}, 4(6):907--914, 1997.

\bibitem[Gab86]{gabaidetecting}
David Gabai.
\newblock Detecting fibred links in {$S^3$}.
\newblock {\em Comment. Math. Helv.}, 61(4):519--555, 1986.

\bibitem[Gom98]{gompf:stein}
Robert~E. Gompf.
\newblock Handlebody construction of {S}tein surfaces.
\newblock {\em Ann. of Math. (2)}, 148(2):619--693, 1998.

\bibitem[GS99]{gompfstipsicz}
Robert~E. Gompf and Andr\'{a}s~I. Stipsicz.
\newblock {\em {$4$}-manifolds and {K}irby calculus}, volume~20 of {\em
  Graduate Studies in Mathematics}.
\newblock Amer.~Math.~Society, Providence, RI, 1999.

\bibitem[HJS13]{HJS}
Matthew Hedden, Andr\'{a}s Juh\'{a}sz, and Sucharit Sarkar.
\newblock On sutured {F}loer homology and the equivalence of {S}eifert
  surfaces.
\newblock {\em Algebr. Geom. Topol.}, 13(1):505--548, 2013.

\bibitem[HKM{\etalchar{+}}]{ancillary}
Kyle Hayden, Seungwon Kim, Maggie Miller, JungHwan Park, and Isaac Sundberg.
\newblock Ancillary files with the ar{X}iv version of ``{S}eifert surfaces in
  the 4-ball".

\bibitem[HS21]{hayden-sundberg}
Kyle Hayden and Isaac Sundberg.
\newblock Khovanov homology and exotic surfaces in the 4-ball.
\newblock {\em arXiv:2108.04810}, 2021.

\bibitem[HW92]{henry-weeks}
Shawn~R. Henry and Jeffrey~R. Weeks.
\newblock Symmetry groups of hyperbolic knots and links.
\newblock {\em J. Knot Theory Ramifications}, 1(2):185--201, 1992.

\bibitem[Jac04]{jacobsson}
Magnus Jacobsson.
\newblock An invariant of link cobordisms from {K}hovanov homology.
\newblock {\em Algebr. Geom. Topol.}, 4:1211--1251, 2004.

\bibitem[JM18]{juhaszmarengon}
Andr\'{a}s Juh\'{a}sz and Marco Marengon.
\newblock Computing cobordism maps in link {F}loer homology and the reduced
  {K}hovanov {TQFT}.
\newblock {\em Selecta Math. (N.S.)}, 24(2):1315--1390, 2018.

\bibitem[JPW14]{johnson-pelayo-wilson}
Jesse Johnson, Roberto Pelayo, and Robin Wilson.
\newblock The coarse geometry of the {K}akimizu complex.
\newblock {\em Algebr. Geom. Topol.}, 14(5):2549--2560, 2014.

\bibitem[Kak91]{kakimizu}
Osamu Kakimizu.
\newblock Doubled knots with infinitely many incompressible spanning surfaces.
\newblock {\em Bull. London Math. Soc.}, 23(3):300--302, 1991.

\bibitem[Kau72]{kauffman}
Louis~H. Kauffman.
\newblock {\em Cyclic {B}ranched {C}overs, {\emph{O($n$)}}-{A}ctions and
  {H}ypersurface {S}ingularities}.
\newblock ProQuest LLC, Ann Arbor, MI, 1972.
\newblock Thesis (Ph.D.)--Princeton University.

\bibitem[Kho00]{khovanov00}
Mikhail Khovanov.
\newblock A categorification of the {J}ones polynomial.
\newblock {\em Duke Math. J.}, 101(3):359--426, 2000.

\bibitem[Kho02]{khovanov02}
Mikhail Khovanov.
\newblock A functor-valued invariant of tangles.
\newblock {\em Algebraic \& Geometric Topology}, 2:665--741, 2002.

\bibitem[Kho06]{khovanov06}
Mikhail Khovanov.
\newblock An invariant of tangle cobordisms.
\newblock {\em Trans. Amer. Math. Soc.}, 358(1):315--327, 2006.

\bibitem[Kir78]{kirby-old}
Rob Kirby.
\newblock Problems in low dimensional manifold theory.
\newblock In {\em Algebraic and geometric topology ({P}roc. {S}ympos. {P}ure
  {M}ath., {S}tanford {U}niv., 1976), {P}art 2}, Proc. Sympos. Pure Math.,
  XXXII, pages 273--312. Amer. Math. Soc., Providence, R.I., 1978.

\bibitem[Kir97]{kirby}
Rob Kirby.
\newblock Problems in low-dimensional topology.
\newblock In William~H. Kazez, editor, {\em Geometric topology ({A}thens, {GA},
  1993)}, volume~2 of {\em AMS/IP Stud. Adv. Math.}, pages 35--473. Amer. Math.
  Soc., Providence, RI, 1997.

\bibitem[Kob89]{kobayashi}
Tsuyoshi Kobayashi.
\newblock Uniqueness of minimal genus {S}eifert surfaces for links.
\newblock {\em Topology Appl.}, 33(3):265--279, 1989.

\bibitem[Kob92]{kobayashipretzel}
Tsuyoshi Kobayashi.
\newblock A construction of {$3$}-manifolds whose homeomorphism classes of
  {H}eegaard splittings have polynomial growth.
\newblock {\em Osaka J. Math.}, 29(4):653--674, 1992.

\bibitem[Liv82]{livingston}
Charles Livingston.
\newblock Surfaces bounding the unlink.
\newblock {\em Michigan Math. J.}, 29(3):289--298, 1982.

\bibitem[LM97]{lisca-matic:embed}
Paolo Lisca and Gordana Mati\'{c}.
\newblock Tight contact structures and {S}eiberg-{W}itten invariants.
\newblock {\em Invent. Math.}, 129(3):509--525, 1997.

\bibitem[LS22]{lipshitz-sarkar}
Robert Lipshitz and Sucharit Sarkar.
\newblock A mixed invariant of nonorientable surfaces in equivariant {K}hovanov
  homology.
\newblock {\em Trans. Amer. Math. Soc.}, 375(12):8807--8849, 2022.

\bibitem[Lyo74]{lyon}
Herbert~C. Lyon.
\newblock Simple knots without unique minimal surfaces.
\newblock {\em Proc. Amer. Math. Soc.}, 43:449--454, 1974.

\bibitem[MMS97]{mms:torus}
John~W. Morgan, Tomasz~S. Mrowka, and Zolt\'{a}n Szab\'{o}.
\newblock Product formulas along {$T^3$} for {S}eiberg-{W}itten invariants.
\newblock {\em Math. Res. Lett.}, 4(6):915--929, 1997.

\bibitem[Mor96]{morgan:sw}
John~W. Morgan.
\newblock {\em The {S}eiberg-{W}itten equations and applications to the
  topology of smooth four-manifolds}, volume~44 of {\em Mathematical Notes}.
\newblock Princeton University Press, Princeton, NJ, 1996.

\bibitem[MWW22]{morrison-walker-wedrich}
Scott Morrison, Kevin Walker, and Paul Wedrich.
\newblock Invariants of 4--manifolds from {K}hovanov--{R}ozansky link homology.
\newblock {\em Geom. Topol.}, 26(8):3367--3420, 2022.

\bibitem[Par78]{parris}
Richard~L. Parris.
\newblock {\em {P}retzel {K}nots}.
\newblock ProQuest LLC, Ann Arbor, MI, 1978.
\newblock Thesis (Ph.D.)--Princeton University.

\bibitem[Pla06]{plamenevskaya:transverse-Kh}
Olga Plamenevskaya.
\newblock Transverse knots and {K}hovanov homology.
\newblock {\em Math. Res. Lett.}, 13(4):571--586, 2006.

\bibitem[Rol76]{rolfsen}
Dale Rolfsen.
\newblock {\em Knots and links}.
\newblock Mathematics Lecture Series, No. 7. Publish or Perish, Inc., Berkeley,
  Calif., 1976.

\bibitem[Rud90]{rudolph:kauffman-bound}
Lee Rudolph.
\newblock A congruence between link polynomials.
\newblock {\em Math. Proc. Cambridge Philos. Soc.}, 107(2):319--327, 1990.

\bibitem[SS64]{schubert-soltsien}
Horst Schubert and Kay Soltsien.
\newblock Isotopie von {F}l\"{a}chen in einfachen {K}noten.
\newblock {\em Abh. Math. Sem. Univ. Hamburg}, 27:116--123, 1964.

\bibitem[SS23]{sundberg-swann}
Isaac Sundberg and Jonah Swann.
\newblock Relative {K}hovanov-{J}acobsson classes.
\newblock {\em Algebr.\ Geom.\ Topol.}, 22(8):3983--4008, 2023.

\bibitem[Sza19]{hfk-calc}
Zolt{\'a}n Szab{\'o}.
\newblock {K}not {F}loer homology calculator.
\newblock Available at
  \url{https://web.math.princeton.edu/~szabo/HFKcalc.html}, 2019.

\bibitem[{The}19]{sagemath}
{The Sage Developers}.
\newblock {S}agemath, the {S}age mathematics software system.
\newblock Available at \url{https://www.sagemath.org}, 2019.

\bibitem[Tro75]{trotter}
Hale~F. Trotter.
\newblock Some knots spanned by more than one knotted surface of minimal genus.
\newblock In {\em Knots, groups, and {$3$}-manifolds (papers dedicated to the
  memory of {R}. {H}. {F}ox)}, pages 51--62. Ann. of Math. Studies, No. 84.
  1975.

\bibitem[Vaf15]{vafaee}
Faramarz Vafaee.
\newblock Seifert surfaces distinguished by sutured {F}loer homology but not
  its {E}uler characteristic.
\newblock {\em Topology and its Applications}, 184:72--86, 2015.

\bibitem[Wan22]{josh}
Joshua Wang.
\newblock The cosmetic crossing conjecture for split links.
\newblock {\em Geom. Topol.}, 26(7):2941--3053, 2022.

\bibitem[Wil08]{wilson}
Robin~T. Wilson.
\newblock Knots with infinitely many incompressible {S}eifert surfaces.
\newblock {\em J. Knot Theory Ramifications}, 17(5):537--551, 2008.

\bibitem[Zee65]{zeeman}
Erik~Christopher Zeeman.
\newblock Twisting spun knots.
\newblock {\em Trans. Am. Math. Soc.}, 115:471--495, 1965.

\end{thebibliography}


\end{document}